\newtheorem{theorem}{Theorem}
\newtheorem{lemma}[theorem]{Lemma}
\newtheorem{corollary}[theorem]{Corollary}
\newtheorem{proposition}[theorem]{Proposition}
\newtheorem{remark}[theorem]{Remark}
\newcommand{\tto}{\twoheadrightarrow}
\font\sc=rsfs10
\newcommand{\cP}{\sc\mbox{P}\hspace{1.0pt}}
\begin{document}
\title[Categorification of the desingularization map]
{Finite quotients of singular Artin monoids and \\ categorification of the desingularization map}

\author[Jonsson, Mazorchuk, Persson Westin, Srivastava, Stroi{\'n}ski, Zhu]
{Helena Jonsson, Volodymyr Mazorchuk, Elin Persson Westin,\\
Shraddha Srivastava, Mateusz Stroi{\'n}ski and  Xiaoyu Zhu}

\begin{abstract}
We study various aspects of the structure and representation
theory of singular Artin monoids. This includes a number of 
generalizations  of the desingularization map and explicit 
presentations for certain finite quotient monoids of diagrammatic
nature. The main result is a categorification of the classical
desingularization map for singular Artin monoids associated to 
finite Weyl groups using BGG category $\mathcal{O}$.
\end{abstract}

\maketitle

\section{Introduction and description of the results}\label{s1}

\subsection{Brief introduction}\label{s1.1}

Singular braid monoids are certain natural generalizations of braid groups
that play an important role in the theory of finite type invariant of
knots and links (a.k.a. Vassiliev invariants), see \cite{Bi93}.
A singular braid allows for two strands to cross each other, creating
a so-called {\em singular crossing}. A singular link is the closure of 
a singular braid and to study invariants of singular links is a
natural and interesting problem. A singular braid monoid admits 
a presentation which generalizes Artin's presentation for the 
corresponding braid group, see \cite[Lemma~3]{Bi93}. Extrapolating
to the setup of an arbitrary Coxeter matrix allows one to define
a singular Artin monoid, for each Artin braid group.

There is an interesting relation between a singular Artin monoid
and the corresponding (regular) Artin braid group.
Interpreting a singular crossing as the difference between 
the left and right crossings defines a homomorphism from a singular
Artin monoid to the integral group algebra of the corresponding Artin
braid group. In \cite{Pa04}, it is shown that this map is 
injective for all singular braid monoids. A similar result is known
for some singular Artin monoids, see \cite{GP05,Ea06}, however,
to the best of our knowledge, the general case is still open.
Various structure properties of singular Artin monoids
and their applications to low dimensional topology were studied
by many authors during the last 30 years, see e.g. 
\cite{An04,An05,Ba92,Bi93,Co00,Co05,Ea06,Ea15,GP05,Ja17,Pa04,Ve10} 
and references therein.

Each braid group admits an interesting upgrade to a $2$-category, see,
for example, \cite{Ro06}. One of the classical $2$-representations of
this object, usually referred to as a $2$-braid group, 
is given by the action of derived shuffling functors on the 
blocks of the Bernstein-Gelfand-Gelfand (BGG) category $\mathcal{O}$
for the Lie algebra $\mathfrak{sl}_n$, see \cite{MaStr}.
It is even known that this action is faithful, see \cite{KS02}.
In \cite[Theorem~1]{MaStr2} (see also \cite[Theorem~5.1]{CM}), 
this was extended to a categorical action of the singular braid monoid.
Unfortunately, this construction does not have any obvious connection
to the desingularization map, even on the level of the Grothendieck group.

\subsection{Story of the paper: how and why}\label{s1.2}

During the academic year 2020-2021, the second author gave a PhD level
course about category $\mathcal{O}$ at Uppsala University
(all lectures are available on YouTube via the Uppsala Algebra Channel).
One of these lectures was exactly about the categorical action of the
singular braid monoid on category $\mathcal{O}$. The research presented
in this paper started from some questions asked by some participants
of the course after that lecture. One of the main questions was whether
one could define another categorical action of the singular braid monoid
on category $\mathcal{O}$ that would be better related to the classical
desingularization map. The discussions that started during the spring of 
2021 were organized as a study group on singular Artin monoids during 
the academic year 2021-2022 in Uppsala. The present paper is the report, 
by all participants, of the discussions during the meetings of this 
study group. 

Now we will briefly describe what we discussed. This will
also help to summarize the results and the structure of the present paper.

We looked into three main directions:
\begin{itemize}
\item Generalizations of the desingularization map.
\item Finite diagrammatic quotients of singular Artin monoids.
\item Categorification of the desingularization map.
\end{itemize}

One could argue that some of the results of the paper could
be cleaned up, strengthened and pushed further. However, the
time constraints on the organization of this 
study group put strict bounds on what we could include in this paper.

\subsection{Results: generalized desingularization map}\label{s1.3}

The first of the research directions, on various generalizations of the
desingularization map, was originally motivated by 
the construction in \cite[Theorem~1]{MaStr2}.
If the latter result is not related to the classical desingularization map,
then what kind of map is it related to?  In Section~\ref{s3} we
investigate various generalizations of the desingularization map.
In particular, in Subsection~\ref{s3.2} we consider the odd skeleton
of a Coxeter graph, that is, the graph obtained from the Coxeter graph 
by removing all even edges. We show that one can define an
analogue of the desingularization map for each assignment of a Laurent 
polynomial to a connected component of the odd skeleton, see 
Proposition~\ref{prop2}. Moreover, for all cases for which the
classical desingularization map is known to be injective, we establish,
in Proposition~\ref{prop3}, the injectivity of a generic choice for 
the generalized desingularization map constructed in Proposition~\ref{prop2}.
In Subsection~\ref{s3.2}, we also discuss some further generalizations of
the classical desingularization map in the setup when one replaces
the integral group algebra of the Artin braid group by the semialgebra
of the Artin braid group over the Boolean semiring. 

The situation encountered in \cite[Theorem~1]{MaStr2} is explained 
in Section~\ref{s9}. The group algebra of the braid group has an
interesting quotient, called the Hecke algebra. The latter controls
the combinatorics of category $\mathcal{O}$. The categorification of the
singular braid monoid proposed in \cite[Theorem~1]{MaStr2} corresponds to
a generalized desingularization map for this quotient. 
This latter map is given by assigning
to the connected components of the odd skeleton of the corresponding 
Coxeter graph the Laurent polynomial describing the Kazhdan-Lusztig basis
element of the Hecke algebra associated to a simple reflection.

\subsection{Results: finite diagrammatic quotients of singular Artin monoids}\label{s1.4}

The second major direction which we explore studies various finite 
diagrammatic quotients of singular Artin monoids. Diagram algebras
are intensively studied in modern representation theory due to their
numerous applications in various areas of contemporary mathematics
and theoretical physics. Such algebras usually have a basis given by
some kind of combinatorially defined diagrams, and multiplication is
based on concatenation of these diagrams. The most general such object
(which contains all other as subobjects) is the algebra of partitioned
binary relations, studied in \cite{MM13}. Other examples include the 
Brauer algebra, see \cite{Br}, and the partition algebra, see \cite{Mar94}. 

Diagram algebras are usually defined as deformations of the corresponding
diagram monoid. In the latter, multiplication is given exactly by
concatenation of diagrams, which usually involves some straightening
procedure. A diagram monoid usually has the symmetric group as the
group of invertible elements. The symmetric group is, of course,
a quotient of the braid group. It is therefore natural to ask
whether the canonical projection from the braid group to the symmetric
group can be extended to a homomorphism from the singular braid monoid
to the diagram monoid in question. In case such an extension is possible,
one could try to characterize its kernel, leading to a presentation of
the image by generators and relations. As a next step, based on this
presentation, one could then try to define analogues of this diagram
monoid for other Coxeter types. That was our main idea for this second
direction addressed in the paper.

All results related to finite quotients of singular Artin monoids
are collected in Section~\ref{s4}. As it turned out, most of them 
can be organized so that one starts with a maximal possible quotient
and then, step by step, projects further.

Our starting point is a homomorphism from the singular braid monoid
to the monoid of all binary relations, see Proposition~\ref{prop-4.7.2-1}.
The homomorphism itself is inspired by our generalized desingularization
maps. In this case, instead of the usual integral group algebra
of the braid group, we consider, as the target of our homomorphism,
the semialgebra of the braid group over the 
Boolean semiring. The image of the singular generator $\tau_s$ under this
map should be thought of as the set $\{e,\sigma_s\}$ consisting of the
identity $e$ and the corresponding regular element $\sigma_s$. This 
map appeared previously in the context of $0$-Hecke monoids, see
\cite{MaSt}. The monoid generated by the images of the singular elements
(in type $A$) was studied in \cite{MaSt} under the name of double Catalan
monoid. The image of the singular braid monoid under this map is also
related to the factor power of the symmetric group as defined and studied in
\cite{GM3,GM4}. In Subsections~\ref{s4.7.3} and \ref{s4.7.4} we explore
type $B$ analogues of these results.

The next target for a homomorphism is the dual symmetric inverse 
monoid $\mathbf{I}^*_n$, that is, the inverse monoid of all bijections 
between quotients of the finite set $\underline{n}=\{1,2,\dots,n\}$.
In Proposition~\ref{prop4.3.1-1} we show that this monoid is a natural
quotient of the monoid of essential binary relations on $\underline{n}$.
The latter contains the image of the singular braid monoid. 
Composing with the projection map onto the quotient, we get the induced
homomorphism from the singular braid monoid to $\mathbf{I}^*_n$.
The image of this homomorphism is the maximal factorizable submonoid 
$\mathbf{F}^*_n$ of $\mathbf{I}^*_n$. The monoid $\mathbf{F}^*_n$ 
was studied in detail by several authors. In particular, there are 
various presentations of this monoid which can be found in the literature,
see \cite{Ea06b,EEF,Fi03}.

In Subsections~\ref{s4.3.4} and \ref{s4.3.5} we look at certain type $B$
analogues of the monoids $\mathbf{F}^*_n$ and $\mathbf{I}^*_n$.
In particular, in Proposition~\ref{prop-4.3.7-1}, by examining the
kernel of the natural projection from the type $B$ singular Artin monoid,
we obtain  a presentation for the type $B$ analogue of the monoid
$\mathbf{F}^*_n$.

We also note that the congruence describing the kernel of the map
from the essential binary relations to  $\mathbf{I}^*_n$ can be
extended to all binary relations. The corresponding quotient is
the monoid of all bijections between quotients of subsets of
$\underline{n}$, studied in \cite{KMU}.

The next stop is the classical symmetric inverse monoid 
$\mathbf{IS}_n$ of all bijections between subsets of $\underline{n}$. 
It turns out, see Proposition~\ref{lem-isn-dual}, that restricting elements of 
$\mathbf{F}^*_n$ to singletons defines a homomorphism to $\mathbf{IS}_n$.
This leads to a natural homomorphism from the singular braid 
monoid to $\mathbf{IS}_n$. This homomorphism is not surjective,
its image ``misses'' all elements with singleton defect.
Again, there is the obvious analogue of the story in type $B$,
explored in Subsections~\ref{s4.2.4}~-- \ref{s4.2.6}.
This includes a presentation for the signed rook monoid
in Proposition~\ref{prop-4.2.6-1}, see also \cite{EF13} and \cite{CE}
for further generalizations.

Finally, in Subsection~\ref{s4.4} we study natural surjections from singular
braid monoids to Brauer monoids. The type $A$ story is fairly
expected, see Proposition~\ref{prop-4.4.2-1}. We also consider
the natural type $B$ analogue of this story, which turns out to be
different from another type $B$ analogue of Brauer algebras considered
in \cite{CL}. As explained in Subsection~\ref{s4.4.65}, our type $B$ 
Brauer monoid is related to the partial Brauer monoid introduced in \cite{Ma98}.
In fact, our type $B$ Brauer monoid is a partially $\mathbb{Z}_2$-colored version
of the partial Brauer monoid. In Theorem~\ref{prop-4.4.6-1} we extend
the presentation of singular braid monoids and type $A$ Brauer monoids to
a presentation of type $B$ Brauer monoids.

\subsection{Results: categorification of the desingularization map}\label{s1.5}

Our final main direction describes a new categorical action of 
singular Artin monoids on category $\mathcal{O}$. All necessary preliminaries 
on category $\mathcal{O}$ are collected in Section~\ref{s5}.

The classical realization of the generators of the braid group as
endofunctors of category $\mathcal{O}$ uses derived (co)shuffling
functors introduced in \cite{Ca}. In \cite[Theorem~1]{MaStr2} it is
proposed to realize the action of the singular crossings using
indecomposable projective functors (associated to the corresponding
simple root). All this is recalled in Section~\ref{s6}.
The most difficult part of the proof of this results is to check
the mixed braid relations between the singular and the regular
generators. In type $A$, the corresponding argument in 
the proof of \cite[Theorem~1]{MaStr2} is incomplete, a complete
proof is given in \cite[Theorem~5.1]{CM} based on an alternative
approach outlined in \cite{MaStr2}. In Theorem~\ref{thm6.2.1} we
give a fairly short general proof of these mixed braid relations
in any type. In Proposition~\ref{prop6.1.1} we also give a general
proof of braid relations for (co)shuffling functors in all types.

Our final main results are in Section~\ref{s7}. To make a
connection with the desingularization map, we observe that there
are two obvious natural transformations between the derived
shuffling and coshuffling functors. The usual philosophy of 
categorification suggests that the singular crossing should
act via the cone of one of these. And we, indeed, show that 
one of the cones works, see Theorem~\ref{thm7.2.1}, while the other one
fails, see Proposition~\ref{prop7.6-3}. The proof of 
Theorem~\ref{thm7.2.1} is split into verifying the defining
relations for the generators of the singular braid monoid, 
one at a time.

\subsection{Structure: preliminaries}\label{s1.6}

Additionally to the above, in Section~\ref{s2} below we collected all
necessary preliminaries, including the definitions for all main protagonists
of this paper which are illustrated by a detailed example in type $A$.

\subsection*{Acknowledgments}
This research is partially supported by the Swedish Research Council.
We thank Hankyung Ko for very helpful discussions. 
We thank James East for helpful comments.

\section{Singular Artin monoids}\label{s2}


\subsection{Coxeter groups}\label{s2.2}

Let $S$ be a finite set with $n>0$ elements.
Consider an $n\times n$ matrix $M=(m_{s,t})_{s,t\in S}$ with entries in 
$\mathbb{Z}_{>0}\cup\{\infty\}$ satisfying the following conditions:
\begin{itemize}
\item $m_{s,s}=1$, for all $s\in S$,
\item $m_{s,t}=m_{t,s}$, for all $s,t\in S$,
\item $m_{s,t}\neq 1$ provided that $s\neq t$.
\end{itemize}
Such a matrix $M$ is called a {\em Coxeter matrix}. Associated to $M$, we
have the corresponding {\em Coxeter group} $W=W(M)$ generated by $S$ subject to
the relations
\begin{displaymath}
(st)^{m_{s,t}}=e,\quad
\text{ for all }s,t\in S\text{ such that }m_{s,t}\neq\infty.
\end{displaymath}
Note that, for $s=t$, we have the relation $s^2=e$. Taking this into account, 
we have the following alternative set of relations defining $W$:
\begin{displaymath}
\begin{array}{rcll}
s^2&=&e,& \text{ for all }s\in S;\\
\underbrace{ststs\dots}_{m_{s,t}\text{ factors}}
&=&\underbrace{tstst\dots}_{m_{t,s}\text{ factors}},&
\text{ for all }s\neq t\in S\text{ such that }m_{s,t}\neq \infty.
\end{array}
\end{displaymath}

The same information as given by matrix $M$ can be described by the
corresponding Coxeter graph $\Gamma_M$ defined as follows:
\begin{itemize}
\item the set of vertices of $\Gamma_M$ is $S$;
\item $\Gamma_M$ has no loops;
\item there is an (unoriented) edge between two different vertices
$s$ and $t$ in $\Gamma_M$ if and only if $m_{s,t}>2$;
\item in case $m_{s,t}>3$, this edge is marked by $m_{s,t}$.
\end{itemize}

\subsection{Artin groups}\label{s2.3}

The Artin group (a.k.a. Artin-Tits group) associated to $W$ is the
group $\mathbf{B}(W)$ generated by $\sigma_s$, where $s\in S$,  
subject to the relations
\begin{displaymath}
\underbrace{\sigma_s\sigma_t\sigma_s\dots}_{m_{s,t}\text{ factors}}
=\underbrace{\sigma_t\sigma_s\sigma_t\dots}_{m_{t,s}\text{ factors}},\qquad
\text{ for all }s\neq t\in S\text{ such that }m_{s,t}\neq \infty.
\end{displaymath}

\subsection{Singular Artin monoids}\label{s2.4}

The singular Artin monoid associated to $W$ is the
monoid $\mathbf{SB}(W)$ generated by $\sigma_s$ and $\tau_s$, where $s\in S$,  
subject to the relations
\begin{eqnarray}\label{sbm-eq1}
\underbrace{\sigma_s\sigma_t\sigma_s\dots}_{m_{s,t}\text{ factors}}
&=&\underbrace{\sigma_t\sigma_s\sigma_t\dots}_{m_{t,s}\text{ factors}},\qquad
\text{ for all }s\neq t\in S\text{ s. t. }m_{s,t}\neq \infty;\\
\label{sbm-eq2}
\underbrace{\tau_s\sigma_t\sigma_s\dots \sigma_t\sigma_s}_{m_{s,t}\text{ factors}}
&=&\underbrace{\sigma_t\sigma_s\sigma_t\dots\sigma_s\tau_t}_{m_{t,s}\text{ factors}},
\text{ for all }s\neq t\in S\text{ s. t. }m_{s,t}\neq \infty\text{ is odd};\\
\label{sbm-eq3}
\underbrace{\tau_s\sigma_t\sigma_s\dots \sigma_s\sigma_t}_{m_{s,t}\text{ factors}}
&=&\underbrace{\sigma_t\sigma_s\sigma_t\dots\sigma_t\tau_s}_{m_{t,s}\text{ factors}},
\text{ for all }s\neq t\in S\text{ s. t. }m_{s,t}\neq \infty\text{ is even};\\
\label{sbm-eq4}
\tau_s\tau_t&=&\tau_t\tau_s,\qquad\qquad\quad m_{s,t}=2;\\
\label{sbm-eq5}
\tau_s\sigma_s&=&\sigma_s\tau_s.
\end{eqnarray}

\subsection{Special case: singular braid monoid}\label{s2.5}

In the special case when we take $S=\{s_1,s_2,\dots,s_{k-1}\}$,  for some $k\geq 2$, 
and the matrix
\begin{displaymath}
M=\left(
\begin{array}{cccccccc}
1&3&2&2&\dots&2&2&2\\
3&1&3&2&\dots&2&2&2\\
2&3&1&3&\dots&2&2&2\\
2&2&3&1&\dots&2&2&2\\
\vdots&\vdots&\vdots&\vdots&\ddots&\vdots&\vdots&\vdots\\
2&2&2&2&\dots&1&3&2\\
2&2&2&2&\dots&3&1&3\\
2&2&2&2&\dots&2&3&1\\
\end{array}
\right),
\end{displaymath}
which corresponds to the following Coxeter graph:
\begin{equation}\label{eq1}
\xymatrix{s_1\ar@{-}[r]&s_2\ar@{-}[r]&s_3\ar@{-}[r]&\dots\ar@{-}[r]&s_{k-1}}, 
\end{equation}
we obtain the following classical objects.

The associated Coxeter group is isomorphic to the symmetric group $S_k$,
via the isomorphism which sends $s_i$ to the transposition $(i,i+1)$.

The associated Artin group is isomorphic to the braid group on $k$ strands
via the isomorphism which sends $\sigma_i:=\sigma_{s_i}$ to the following braid:
\begin{displaymath}
\xymatrix@C=3mm{
1\ar@{-}[d]&2\ar@{-}[d]&\dots&i-1\ar@{-}[d]&
i\ar@{-}[dr]&i+1\ar@{-}[dl]|\hole&i+2\ar@{-}[d]&\dots&k\ar@{-}[d]\\
1&2&\dots&i-1&i&i+1&i+2&\dots&k
}
\end{displaymath}

The associated singular Artin monoid is isomorphic to the singular braid monoid on $k$ 
strands via the isomorphism which sends $\sigma_i:=\sigma_{s_i}$ to the above braid
and which sends the generator $\tau_i:=\tau_{s_i}$ to the following singular braid:
\begin{displaymath}
\xymatrix@C=3mm{
1\ar@{-}[d]&2\ar@{-}[d]&\dots&i-1\ar@{-}[d]&
i\ar@{-}[dr]&i+1\ar@{-}[dl]|-{\bullet}\ar@{-}[dl]&i+2\ar@{-}[d]&\dots&k\ar@{-}[d]\\
1&2&\dots&i-1&i&i+1&i+2&\dots&k
}
\end{displaymath}

\section{Desingularization map and its generalizations}\label{s3}

\subsection{Desingularization map}\label{s3.1}

Let $M$ be a Coxeter matrix with the associated Coxeter group $W$. 
Consider the integral group algebra 
$\mathbb{Z}(\mathbf{B}(W))$. This algebra is, in particular, a
monoid with respect to multiplication.

\begin{lemma}\label{lem1}
There is a monoid homomorphism $\Delta:\mathbf{SB}(W)\to \mathbb{Z}(\mathbf{B}(W))$ 
which is the identity on all $\sigma_s$ and sends each $\tau_s$
to $\sigma_s-\sigma_s^{-1}$.
\end{lemma}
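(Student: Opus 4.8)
The plan is to define $\Delta$ first on the free monoid generated by the symbols $\sigma_s$ and $\tau_s$, via the prescribed formulas $\sigma_s\mapsto\sigma_s$ and $\tau_s\mapsto\sigma_s-\sigma_s^{-1}$, landing in the multiplicative monoid of $\mathbb{Z}(\mathbf{B}(W))$, and then to verify that $\Delta$ respects each of the defining relations \eqref{sbm-eq1}--\eqref{sbm-eq5}; by the universal property of monoid presentations this produces the desired homomorphism. Three of the five families are immediate. Relation \eqref{sbm-eq1} is sent to the braid relation among the $\sigma_s$, which holds in $\mathbf{B}(W)$ by definition. Relation \eqref{sbm-eq5} becomes $(\sigma_s-\sigma_s^{-1})\sigma_s=\sigma_s(\sigma_s-\sigma_s^{-1})$, both sides equal to $\sigma_s^2-e$. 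Relation \eqref{sbm-eq4} concerns the case $m_{s,t}=2$, where $\sigma_s$ and $\sigma_t$ commute in $\mathbf{B}(W)$; hence $\sigma_s-\sigma_s^{-1}$ and $\sigma_t-\sigma_t^{-1}$ commute as well, and both orderings map to the same element.

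The substance lies in the mixed braid relations \eqref{sbm-eq2} and \eqref{sbm-eq3}, which I will treat uniformly. Write $m=m_{s,t}=m_{t,s}$ and introduce the alternating positive word $P=\underbrace{\sigma_t\sigma_s\sigma_t\cdots}_{m-1\text{ factors}}$, which is exactly the string of $\sigma$'s flanking the $\tau$-generators on both sides of each relation. Applying $\Delta$ and substituting $\tau_s\mapsto\sigma_s-\sigma_s^{-1}$ and $\tau_t\mapsto\sigma_t-\sigma_t^{-1}$, each side splits into a \emph{positive} term and a \emph{negative} term: in the odd case $\Delta$ of the two sides reads $\sigma_s P-\sigma_s^{-1}P$ and $P\sigma_t-P\sigma_t^{-1}$, while in the even case it reads $\sigma_s P-\sigma_s^{-1}P$ and $P\sigma_s-P\sigma_s^{-1}$. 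The positive terms, obtained by replacing every $\tau$ with the matching $\sigma$, are the two sides of the ordinary braid relation $\underbrace{\sigma_s\sigma_t\cdots}_{m}=\underbrace{\sigma_t\sigma_s\cdots}_{m}$, hence coincide in $\mathbf{B}(W)$; call this common element $w$.

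It then remains to match the two negative terms, and here the key point is that the braid relation itself already encodes the needed conjugation identity. In the odd case it records $\sigma_s P=w=P\sigma_t$; multiplying $\sigma_s P=P\sigma_t$ on the left by $\sigma_s^{-1}$ and on the right by $\sigma_t^{-1}$ yields $\sigma_s^{-1}P=P\sigma_t^{-1}$, which are precisely the two negative terms of \eqref{sbm-eq2}. The even case is identical: the braid relation gives $\sigma_s P=w=P\sigma_s$, so that $\sigma_s^{-1}P=P\sigma_s^{-1}$ matches the negative terms of \eqref{sbm-eq3}. In both cases positive and negative parts agree, completing the verification of the relation.

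The step I expect to demand the most care is the parity bookkeeping: one must check that for $m$ odd versus $m$ even, the first and last letters of the length-$(m-1)$ word $P$ are as asserted, so that $\sigma_s P$ and $P\sigma_t$ (respectively $P\sigma_s$) genuinely are the two sides of the length-$m$ braid relation. Once this is pinned down, both mixed relations collapse to the single conjugation identity extracted from the braid relation, and no further computation is required.
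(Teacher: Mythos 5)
Your proposal is correct and follows exactly the route the paper takes: the paper's proof consists of the single assertion that the images of the generators satisfy the defining relations, and your argument simply carries out that verification, with the mixed relations \eqref{sbm-eq2} and \eqref{sbm-eq3} correctly reduced to the conjugation identity $\sigma_s^{-1}P=P\sigma_t^{\pm 1}$ extracted from the braid relation. The parity bookkeeping you flag does check out in both cases, so nothing is missing.
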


\begin{proof}
It is straightforward to verify that the images in $\mathbb{Z}(\mathbf{B}(W))$ 
of the generators of $\mathbf{SB}(W)$ under $\Delta$ satisfy the defining
relations of $\mathbf{SB}(W)$.
\end{proof}

The map $\Delta$ was originally considered in \cite{Ba92}. It
is called the {\em desingularization map}, see \cite{Bi93}.
For the classical singular braid monoid, the map $\Delta$ has the following
topological interpretation:
\begin{displaymath}
\xymatrix@C=3mm@R=2mm{
\ar@{-}[ddrr]&&\ar@{-}[ddll]\ar@{-}[ddll]&&&&
\ar@{-}[ddrr]&&\ar@{-}[ddll]|\hole&&&&\ar@{-}[ddrr]|\hole&&\ar@{-}[ddll]\\
&\bullet&&&=&&&&&&-&&&&\\
&&&&&&&&&&&&&&
}
\end{displaymath}
In \cite{Pa04}, it is shown that, in the case of the classical singular
braid monoid, the map $\Delta$ is injective. Similar results are also known
in a number of other cases, see \cite{GP05,Ea06}.

\subsection{The odd skeleton}\label{s3.2}

We want to generalize the desingularization map. In order to give 
the full generalization, we need to introduce the notion of the
{\em odd skeleton} of a Coxeter graph.

Let $\Gamma$ be a Coxeter graph. The {\em odd skeleton} $\mathbf{O}(\Gamma)$
of $\Gamma$ is the graph obtained from $\Gamma$ by removing all edges that 
are marked by even numbers or by $\infty$. First, to mention a trivial example, 
if  $\Gamma$ is the graph given by \eqref{eq1}, then 
$\mathbf{O}(\Gamma)=\Gamma$. At the same time, in type $B_4$, we have:
\begin{displaymath}
\Gamma:\xymatrix{\bullet\ar@{-}[r]^4&\bullet\ar@{-}[r]&\bullet\ar@{-}[r]&\bullet},
\qquad
\mathbf{O}(\Gamma):
\xymatrix{\bullet&\bullet\ar@{-}[r]&\bullet\ar@{-}[r]&\bullet}.
\end{displaymath}

\subsection{Generalized desingularization map}\label{s3.3}

Let $M$ be a Coxeter matrix with the corresponding Coxeter group $W$. 
Let $\Gamma_M$ be the corresponding Coxeter graph
and $K$ the set of connected components of $\mathbf{O}(\Gamma)$.
For $s\in S$, we denote by $K_s$ the connected component of $\mathbf{O}(\Gamma)$
containing $s$. 

For a fixed map $\Phi:K\to \mathbb{Z}[x,x^{-1}]$, define
\begin{displaymath}
\Delta_\Phi(\sigma_s)=\sigma_s\text{ and }
\Delta_\Phi(\tau_s)=\Phi(K_s)[\sigma_s],\text{ for }s\in S.
\end{displaymath}

\begin{proposition}\label{prop2}
The above assignment extends uniquely to a monoid homomorphism
$\Delta_\Phi:\mathbf{SB}(W)\to \mathbb{Z}(\mathbf{B}(W))$.
\end{proposition}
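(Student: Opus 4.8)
The plan is to note first that uniqueness is automatic: the elements $\sigma_s,\tau_s$ generate $\mathbf{SB}(W)$ as a monoid, so a monoid homomorphism is determined by its values on them. Hence the entire content is \emph{existence}, and by the universal property of a presentation it suffices to check that the elements $\Delta_\Phi(\sigma_s)=\sigma_s$ and $\Delta_\Phi(\tau_s)=\Phi(K_s)[\sigma_s]$ of $\mathbb{Z}(\mathbf{B}(W))$ satisfy the defining relations \eqref{sbm-eq1}--\eqref{sbm-eq5}. I would dispatch the three easy families first. Relation \eqref{sbm-eq1} holds because $\Delta_\Phi$ is the identity on the $\sigma_s$ and the braid relations already hold in $\mathbf{B}(W)\subseteq\mathbb{Z}(\mathbf{B}(W))$. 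For \eqref{sbm-eq5} and \eqref{sbm-eq4} the point is that $\Delta_\Phi(\tau_s)=\Phi(K_s)[\sigma_s]$ is a Laurent polynomial in $\sigma_s$: it commutes with $\sigma_s$, giving \eqref{sbm-eq5}, and when $m_{s,t}=2$ it commutes with $\Phi(K_t)[\sigma_t]$ since $\sigma_s$ and $\sigma_t$ commute in $\mathbf{B}(W)$, giving \eqref{sbm-eq4} (here $\Phi(K_s)$ and $\Phi(K_t)$ need not agree, which is harmless).

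The heart of the matter is the mixed relations \eqref{sbm-eq2} and \eqref{sbm-eq3}. Write $m=m_{s,t}$ and let $v$ be the alternating word $\underbrace{\sigma_t\sigma_s\cdots}_{m-1}$ of length $m-1$ occurring after $\tau_s$ on the left-hand sides. The key group-theoretic identity is that $\sigma_s v=v\,\sigma_t$ when $m$ is odd and $\sigma_s v=v\,\sigma_s$ when $m$ is even. Indeed, in either case $\sigma_s v=\underbrace{\sigma_s\sigma_t\cdots}_{m}$, while appending the appropriate generator gives $v\,\sigma_t=\underbrace{\sigma_t\sigma_s\cdots}_{m}$ (odd case) resp.\ $v\,\sigma_s=\underbrace{\sigma_t\sigma_s\cdots}_{m}$ (even case); thus the identity is \emph{literally} the braid relation \eqref{sbm-eq1}. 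Conjugating and taking powers yields $v^{-1}\sigma_s^{\,i}v=\sigma_t^{\,i}$ (odd) resp.\ $v^{-1}\sigma_s^{\,i}v=\sigma_s^{\,i}$ (even) for all $i\in\mathbb{Z}$, and therefore, for any Laurent polynomial $p$, that $p[\sigma_s]\,v=v\,p[\sigma_t]$ resp.\ $p[\sigma_s]\,v=v\,p[\sigma_s]$.

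I would then apply this directly. After applying $\Delta_\Phi$, both sides of \eqref{sbm-eq2} and \eqref{sbm-eq3} take the shape $\Phi(K_s)[\sigma_s]\cdot v$ and $v\cdot\Phi(-)[-]$. In the even case \eqref{sbm-eq3} the trailing generator is again $\tau_s$, so the required equality is exactly $\Phi(K_s)[\sigma_s]\,v=v\,\Phi(K_s)[\sigma_s]$, which the computation above closes at once. In the odd case \eqref{sbm-eq2} the trailing generator is $\tau_t$, so I must invoke $\Phi(K_s)=\Phi(K_t)$. This is precisely where the odd skeleton is used: an odd value $m_{s,t}$ means $s$ and $t$ are joined by an edge of $\mathbf{O}(\Gamma)$, hence $K_s=K_t$ and $\Phi(K_s)=\Phi(K_t)$; the desired $\Phi(K_s)[\sigma_s]\,v=v\,\Phi(K_t)[\sigma_t]$ then follows from the odd conjugation identity. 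This conceptual step — that $\Phi$ must be constant on odd-skeleton components for the construction to be consistent — is the real reason the statement is formulated via $\mathbf{O}(\Gamma)$.

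The only genuinely delicate point I would watch is the combinatorial bookkeeping of the alternating words (their first and last letters and the parity of their lengths) together with the passage from $\sigma_s v=v\sigma_t$ to the statement for all integer powers, which is exactly what lets a general Laurent polynomial, with possibly negative exponents, be moved across $v$. Once these are handled, every relation reduces formally to \eqref{sbm-eq1} and the commutativity of polynomials in a single $\sigma_s$, so no further obstacle remains.
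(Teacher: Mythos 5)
Your proposal is correct and follows essentially the same route as the paper's proof: dispatch \eqref{sbm-eq1}, \eqref{sbm-eq4}, \eqref{sbm-eq5} via commutativity of Laurent polynomials in a single generator, and reduce \eqref{sbm-eq2}, \eqref{sbm-eq3} to the braid relation by conjugating $\sigma_s^i$ past the alternating word of length $m_{s,t}-1$, invoking $\Phi(K_s)=\Phi(K_t)$ in the odd case. Your write-up is somewhat more explicit about the word combinatorics, but there is no substantive difference.
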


\begin{proof}
We need to check that the images of the generators of $\mathbf{SB}(W)$
in $\mathbb{Z}(\mathbf{B}(W))$ under $\Delta_\Phi$
satisfy the defining relations of $\mathbf{SB}(W)$.
The relations in \eqref{sbm-eq1} are obvious. The relations in 
\eqref{sbm-eq4} follow from the relations 
$\sigma_s\sigma_t=\sigma_t\sigma_s$ provided that $m_{s,t}=2$,
which, in particular, implies that $\sigma_s^i\sigma_t^j=\sigma_t^j\sigma_s^i$
provided that $m_{s,t}=2$, for all $i,j\in\mathbb{Z}$.

The relations in \eqref{sbm-eq3} follow similarly from the 
corresponding relations for $\mathbf{B}(W)$. The latter, basically, say
that $\sigma_s$ commutes with $\sigma_t\sigma_s\dots \sigma_t$ 
(the latter word is of length $m_{s,t}-1$) and hence
$\sigma_s^i$ commutes with $\sigma_t\sigma_s\dots \sigma_t$, for all
$i\in \mathbb{Z}$.

Next, to check the relations in \eqref{sbm-eq2}, we will use
that the map $\Phi(K_{{}_-}):S\to \mathbb{Z}[x,x^{-1}]$ 
is constant on the connected components of 
the odd skeleton. The relations in $\mathbf{B}(W)$ say that moving
$\sigma_s$ past $\sigma_t\sigma_s\dots \sigma_s$ (of length $m_{s,t}-1$)
to the right, we get $\sigma_t$. This implies a similar claim for
$\sigma_s^i$ and $\sigma^i_t$, for any $i\in \mathbb{Z}$. Now, taking into
account that the Laurent polynomials assigned to $s$ and $t$ by $\Phi$
are the same (since $m_{s,t}$ is odd), the necessary relation follows.

Finally, the relations in \eqref{sbm-eq5} follow from the fact that 
$\sigma_s$ commutes with all its integer powers.
\end{proof}

It is natural to call $\Delta_\Phi$ a {\em generalized desingularization map}.

\subsection{Generic generalized desingularization map}\label{s3.4}

For a fixed $k\in\mathbb{Z}_{\geq 0}$, denote by $\mathcal{A}_k$
the set of all possible choices of $\Phi:K\to \mathbb{Z}[x,x^{-1}]$
such that $\Phi(K_s)$ is of the form
\begin{displaymath}
a_{-k}x^{-k}+a_{-(k-1)}x^{-(k-1)}+\dots+a_{k-1}x^{k-1}+a_{k}x^{k},
\text{ for some }  a_{-k},\dots, a_{k}\in\mathbb{Z},
\end{displaymath}
for any simple reflection $s$. The set $\mathcal{A}_k$ has the obvious structure
of a free $\mathbb{Z}$-module of rank $(2k+1)^{|K|}$.

As usual, we will say that a subset of $\mathcal{A}_k$ is {\em generic}
provided that this subset is dense in $\mathbb{C}\otimes_{\mathbb{Z}}\mathcal{A}_k$
with respect to Zariski topology. The next proposition shows that, in many cases,
``almost all'' choices for the generalized desingularization map lead to 
injective monoid homomorphisms.

\begin{proposition}\label{prop3}
Assume that the Coxeter group $W$ belongs to one of the following cases:
\begin{enumerate}[$($a$)$]
\item \label{prop3.1} the symmetric group (i.e. type $A$),
\item \label{prop3.2} the dihedral group (i.e. type $I_2(n)$),
\item \label{prop3.3} the right-angled group (i.e. each $m_{s,t}\in\{2,\infty\}$,
for all $s\neq t$).
\end{enumerate}
Then, for each $k\in\mathbb{Z}_{\geq 1}$, the set of all $\Phi\in\mathcal{A}_k$,
for which $\Delta_\Phi$ is injective, is generic.
\end{proposition}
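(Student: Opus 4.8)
The plan is to realize the non-injective parameters as a countable union of proper closed subvarieties and then to invoke the classical injectivity results to exclude a single explicit point, so that over the uncountable field $\mathbb{C}$ the complement is automatically dense. Throughout I identify $\mathbb{C}\otimes_{\mathbb{Z}}\mathcal{A}_k$ with the affine space $\mathbb{C}^N$, where $N=(2k+1)^{|K|}$ and the coordinates are the coefficients $a^{(K)}_{-k},\dots,a^{(K)}_{k}$ of the Laurent polynomials $\Phi(K)$; I regard $\Delta_\Phi$ as defined for every $\mathbb{C}$-point $\Phi$ of this space by the formulas of Subsection~\ref{s3.3}, which is legitimate because the verification of the defining relations in Proposition~\ref{prop2} is insensitive to the ground ring. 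First I would record that, for a word $u$ in the generators $\sigma_s^{\pm1},\tau_s$, the element $\Delta_\Phi(u)\in\mathbb{C}(\mathbf{B}(W))$ depends polynomially on $\Phi$: indeed $\Delta_\Phi(\sigma_s)=\sigma_s$ is constant while $\Delta_\Phi(\tau_s)=\sum_i a^{(K_s)}_i\sigma_s^i$ is linear in the coordinates, so expanding the product in the group basis of $\mathbf{B}(W)$ expresses each coefficient of $\Delta_\Phi(u)$ as a polynomial in $\Phi$, homogeneous of degree equal to the number of occurrences of singular generators in $u$ (a quantity that is constant on the fibres of the evident homomorphism $\mathbf{SB}(W)\to\mathbb{Z}_{\geq0}$ counting the $\tau$'s).

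Given this, for every ordered pair $(u,v)$ of distinct elements of the countable monoid $\mathbf{SB}(W)$ the set $Z_{u,v}=\{\Phi:\Delta_\Phi(u)=\Delta_\Phi(v)\}$ is Zariski closed, being the common zero locus of the finitely many coefficient polynomials of $\Delta_\Phi(u)-\Delta_\Phi(v)$; and $\Delta_\Phi$ fails to be injective precisely on $\bigcup_{u\neq v}Z_{u,v}$. The key point is that each $Z_{u,v}$ is a \emph{proper} subvariety. To see this, let $\Phi_0$ be the constant assignment $\Phi_0(K)=x-x^{-1}$, which lies in $\mathcal{A}_k$ for $k\geq1$ and for which $\Delta_{\Phi_0}$ is exactly the classical desingularization map $\Delta$ of Lemma~\ref{lem1}. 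In each of the cases \ref{prop3.1}, \ref{prop3.2} and \ref{prop3.3} the map $\Delta$ is known to be injective, by \cite{Pa04} in type $A$ and by \cite{GP05,Ea06} for the dihedral and right-angled groups; hence $\Delta_{\Phi_0}(u)\neq\Delta_{\Phi_0}(v)$ whenever $u\neq v$, so $\Phi_0\notin Z_{u,v}$ and $Z_{u,v}\neq\mathbb{C}^N$. It is exactly here that the hypothesis on $W$ enters.

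It then remains to pass from \emph{proper} to \emph{dense complement}. Since $\mathbf{SB}(W)$ is countable, the non-injective locus $\bigcup_{u\neq v}Z_{u,v}$ is a countable union of proper closed subsets of the irreducible affine space $\mathbb{C}^N$ over the uncountable field $\mathbb{C}$. If its complement were not dense, its closure would be a proper closed set $C$, and the nonempty open subset $\mathbb{C}^N\setminus C$, being open in an irreducible variety, would itself be irreducible and covered by the countably many proper closed subsets $Z_{u,v}\cap(\mathbb{C}^N\setminus C)$, contradicting the standard fact that an irreducible variety over an uncountable algebraically closed field is not a countable union of proper closed subvarieties. Hence the injective locus is Zariski dense, that is, generic. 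The one genuinely external input, and the main obstacle, is the classical injectivity invoked in the second step: this is a nontrivial theorem available only for special Coxeter types, which is precisely why the statement is confined to the three listed families, and any strengthening of the proposition would first require enlarging the range of known classical injectivity.
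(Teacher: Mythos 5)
Your overall strategy coincides with the paper's: parametrize the choices of $\Phi$ by an affine space, observe that the coefficients of $\Delta_\Phi(u)$ in the group basis of $\mathbb{C}(\mathbf{B}(W))$ depend polynomially on $\Phi$, so that each coincidence locus $Z_{u,v}$ is Zariski closed, and use the injectivity of the classical desingularization map (the point $\Phi_0$ with $\Phi_0(K)=x-x^{-1}$, which lies in $\mathcal{A}_1\subseteq\mathcal{A}_k$) together with \cite{Pa04,GP05,Ea06} to see that each $Z_{u,v}$ is proper. Where you diverge is in the last step. The paper asserts that the whole non-injective locus $\mathcal{B}=\bigcup_{u\neq v}Z_{u,v}$ is closed and then intersects its open dense complement with the dense subset $\mathcal{A}_k$; you instead keep $\mathcal{B}$ as a countable union of proper closed subsets and invoke the fact that an irreducible variety over an uncountable field is not such a union. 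Your treatment of the union is the more scrupulous one: the paper's ``taking the intersection \dots is closed'' is not justified as written, since an infinite union of closed sets need not be closed.

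However, this substitution opens a genuine gap with respect to the statement actually being proved. Proposition~\ref{prop3} asserts that the set of $\Phi\in\mathcal{A}_k$ --- integral coefficients --- with $\Delta_\Phi$ injective is generic, and ``generic'' is defined in Subsection~\ref{s3.4} as a property of subsets of $\mathcal{A}_k$, namely density in $\mathbb{C}\otimes_{\mathbb{Z}}\mathcal{A}_k$. Your cardinality argument shows that the complex injective locus $\mathcal{A}_k^{\mathbb{C}}\setminus\mathcal{B}$ is dense, but it says nothing about $\mathcal{A}_k\setminus\mathcal{B}$: the lattice $\mathcal{A}_k$ is itself countable, so a countable union of proper closed subsets can perfectly well contain all of it (it is, for instance, the union of its own points). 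One therefore cannot conclude from ``$\mathcal{B}$ is a countable union of proper closed sets'' that $\mathcal{A}_k\setminus\mathcal{B}$ is nonempty, let alone Zariski dense. The paper's route avoids this because a single proper closed set has open dense complement, and the intersection of an open dense set with the dense subset $\mathcal{A}_k$ is again dense --- but that route rests on the unproved closedness of $\mathcal{B}$. To close your argument you would need either to justify that $\mathcal{B}$ (or at least $\mathcal{A}_k\cap\mathcal{B}$) is contained in a proper closed subset, or to supply a separate argument producing, for each nonzero polynomial $f$, an integral $\Phi$ with $f(\Phi)\neq 0$ that avoids every $Z_{u,v}$.
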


\begin{proof}
We start by extending the scalars from $\mathbb{Z}$ to $\mathbb{C}$.
We consider the group algebra $\mathbb{C}(\mathbf{B}(W))$ and, for any
choice of $\Phi:K\to \mathbb{C}[x,x^{-1}]$, the corresponding 
monoid homomorphism $\Delta_\Phi:\mathbf{SB}(W)\to \mathbb{C}(\mathbf{B}(W))$.
Then we also have the corresponding set $\mathcal{A}_k^\mathbb{C}$ which is
naturally isomorphic to $\mathbb{C}\otimes_{\mathbb{Z}}\mathcal{A}_k$ as a
vector space.

Let $\Phi\in\mathcal{A}_k^\mathbb{C}$ and $u$ and $v$ be two different
elements in $\mathbf{SB}(W)$. Then the fact that $\Delta_\Phi(u)=\Delta_\Phi(v)$
means that the coefficients of $\Delta_\Phi(u)$ with respect to the standard basis
of $\mathbb{C}(\mathbf{B}(W))$ coincide with the corresponding coefficients
of $\Delta_\Phi(v)$. Each such coefficient is, by construction, a polynomial
in the coefficients of all $\Phi(K_s)$. This means that the condition
$\Delta_\Phi(u)=\Delta_\Phi(v)$ defines a closed subset of $\mathcal{A}_k^\mathbb{C}$.
Taking the intersection over all $u$ and $v$, we obtain that the set $\mathcal{B}$ 
of all $\Phi\in\mathcal{A}_k^\mathbb{C}$, for which $\Delta_\Phi$ is not injective,
is closed. 

Under our assumptions on the type of $W$, the fact that the set $\mathcal{B}$
does not coincide with $\mathcal{A}_k^\mathbb{C}$ follows from
the results of \cite{Pa04,GP05,Ea06} because the classical desingularization map
corresponds to the choice of a particular element in $\mathcal{A}_1\subset\mathcal{A}_k\subset
\mathcal{A}_k^\mathbb{C}$. Since $\mathcal{A}_k$
is dense in $\mathcal{A}_k^\mathbb{C}$, it follows that 
$\mathcal{A}_k\setminus \mathcal{B}$ is dense in $\mathcal{A}_k^\mathbb{C}$
as well. This completes the proof.
\end{proof}

\subsection{Further generalizations}\label{s3.5}

The content of this subsection is inspired by 
the results of \cite[Subsection~2.2]{MaSt}.
Let $R=(R,+,\cdot,0,1)$ be a commutative semiring (e.g. in the sense of 
\cite[Subsection~2.1]{CDJMPZZ}). Then we can consider 
the corresponding semialgebra $R(\mathbf{B}(W))$ and, for any
choice of $\Phi:K\to R[x,x^{-1}]$, from the obvious analogue of
Proposition~\ref{prop2}, we obtain the corresponding 
monoid homomorphism $$\Delta_\Phi:\mathbf{SB}(W)\to R(\mathbf{B}(W)).$$

One particular choice of $R$ leads to a very natural construction
and interpretation. Consider as $R$ the Boolean semiring
$\mathtt{B}=\{0:=\mathtt{false},1:=\mathtt{true}\}$ with respect
to the usual operations $+:=\vee$ and $\cdot:=\wedge$.
Then the semialgebra $\mathtt{B}(\mathbf{B}(W))$ can be naturally 
identified with the set of all finite subsets of $\mathbf{B}(W)$.
Similarly, we can view $\mathtt{B}(\mathbb{Z})$
as the set of all finite subsets of $\mathbb{Z}$.
In this case, the obvious analogue of Proposition~\ref{prop2}
can be reformulated as follows:

\begin{proposition}\label{prop4} 
For a fixed map $\Phi:K\to \mathtt{B}(\mathbb{Z})$, define
\begin{displaymath}
\Delta_\Phi(\sigma_s)=\sigma_s\text{ and }
\Delta_\Phi(\tau_s)=\{\sigma_s^i\,:\,i\in \Phi(K_s)\},\text{ for }s\in S.
\end{displaymath}
This assignment extends uniquely to a monoid homomorphism
\begin{displaymath}
\Delta_\Phi:\mathbf{SB}(W)\to \mathtt{B}(\mathbf{B}(W)). 
\end{displaymath}
\end{proposition}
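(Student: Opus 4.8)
The strategy is to reduce Proposition~\ref{prop4} to the already-established Proposition~\ref{prop2}, viewed over a general commutative semiring. The Boolean semiring $\mathtt{B}$ is a perfectly good commutative semiring, so the semialgebra $\mathtt{B}(\mathbf{B}(W))$ exists and the abstract analogue of Proposition~\ref{prop2} applies: any map $\Psi\colon K\to \mathtt{B}[x,x^{-1}]$ yields a monoid homomorphism $\Delta_\Psi\colon \mathbf{SB}(W)\to \mathtt{B}(\mathbf{B}(W))$ sending $\sigma_s\mapsto\sigma_s$ and $\tau_s\mapsto\Psi(K_s)[\sigma_s]$. So the real content of the proposition is purely a translation: I must check that the formula $\Delta_\Phi(\tau_s)=\{\sigma_s^i : i\in\Phi(K_s)\}$ is exactly what one obtains from the abstract construction under the identifications $\mathtt{B}(\mathbb{Z})\cong\{\text{finite subsets of }\mathbb{Z}\}$ and $\mathtt{B}(\mathbf{B}(W))\cong\{\text{finite subsets of }\mathbf{B}(W)\}$.

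First I would make these identifications precise. An element of the semialgebra $\mathtt{B}(\mathbf{B}(W))$ is a finitely supported function $\mathbf{B}(W)\to\mathtt{B}$, i.e. a choice, for each group element, of $\mathtt{true}$ or $\mathtt{false}$; this is the same as the subset of group elements assigned $\mathtt{true}$. Under this dictionary, addition (= coefficientwise $\vee$) becomes union of subsets, and the convolution product becomes the usual product of subsets, $A\cdot B=\{ab : a\in A,\,b\in B\}$, because a product coefficient $\bigvee_{g=ab}(A_a\wedge B_b)$ is $\mathtt{true}$ precisely when $g$ factors as a product of an element of $A$ and an element of $B$. Identically, $\mathtt{B}(\mathbb{Z})$ is the set of finite subsets of $\mathbb{Z}$. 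A Laurent polynomial $\Psi(K_s)=\sum_{i} a_i x^i\in\mathtt{B}[x,x^{-1}]$ with coefficients in $\mathtt{B}$ is then the same datum as the finite set $\Phi(K_s):=\{i : a_i=\mathtt{true}\}\subseteq\mathbb{Z}$, and this is exactly the bijection between maps $\Phi\colon K\to\mathtt{B}(\mathbb{Z})$ and maps $\Psi\colon K\to\mathtt{B}[x,x^{-1}]$ that the statement implicitly uses.

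Next I would verify that the two descriptions of $\Delta(\tau_s)$ agree. Specializing $x$ to $\sigma_s$ in the Laurent polynomial $\Psi(K_s)=\sum_i a_i x^i$ means forming $\Psi(K_s)[\sigma_s]=\sum_{i:\,a_i=\mathtt{true}}\sigma_s^i$, where the sum is the semialgebra addition. Under the subset identification this sum is precisely the union $\bigcup_{i\in\Phi(K_s)}\{\sigma_s^i\}=\{\sigma_s^i : i\in\Phi(K_s)\}$, which is the formula in the statement. Thus $\Delta_\Phi$ in the proposition coincides with $\Delta_\Psi$ from the abstract analogue of Proposition~\ref{prop2}, and existence and uniqueness of the monoid homomorphism follow immediately. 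The point of uniqueness is that $\mathbf{SB}(W)$ is generated by the $\sigma_s$ and $\tau_s$, so a homomorphism is determined by its values on these generators.

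The only genuine verification is that the abstract Proposition~\ref{prop2} really does go through verbatim over an arbitrary commutative semiring, since its proof as written manipulates integer powers $\sigma_s^i$ for $i\in\mathbb{Z}$ and invokes the group relations in $\mathbf{B}(W)$. I expect this to be the main (and only) obstacle, though a mild one: the proof of Proposition~\ref{prop2} never subtracts, it only uses that the $\sigma_s$ are invertible in $\mathbf{B}(W)$ (so that $\sigma_s^i$ makes sense for negative $i$) and that the defining braid relations let one transport powers of $\sigma_s$ past the relevant words. Both facts are properties of the group $\mathbf{B}(W)$ and of the specialization map $x\mapsto\sigma_s$, and are independent of the coefficient semiring. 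Since each defining relation \eqref{sbm-eq1}--\eqref{sbm-eq5} is an equality of the corresponding subsets, checked elementwise exactly as in the integral case, the verification carries over unchanged, completing the proof.
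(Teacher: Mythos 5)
Your proposal is correct and follows essentially the same route as the paper, which states Proposition~\ref{prop4} as a reformulation of ``the obvious analogue of Proposition~\ref{prop2}'' over the Boolean semiring and gives no further argument; you simply spell out the dictionary between $\mathtt{B}$-semialgebras and finite subsets, and correctly observe that the proof of Proposition~\ref{prop2} uses no subtraction and hence carries over to any commutative semiring.
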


We do not know whether any of these $\Delta_\Phi$'s is injective or not.

\section{Singular Artin monoids and Hecke algebras}\label{s9}

\subsection{Hecke algebra}\label{s9.1}

Let $M$ be a Coxeter matrix and $(W,S)$ the corresponding Coxeter system. 

Consider the Laurent polynomial  ring
$\mathbb{A}:=\mathbb{Z}[v,v^{-1}]$ and the corresponding group algebra
$\mathbb{A}(\mathbf{B}(W))$. Denote by $\mathbf{H}=\mathbf{H}(W)$
the quotient of $\mathbb{A}(\mathbf{B}(W))$ modulo the ideal
generated by the following relations:
\begin{displaymath}
(\sigma_s-v)(\sigma_s+v^{-1})=0,\quad \text{ for }\quad s\in S.
\end{displaymath}
Traditionally, the image of $\sigma_s$  in $\mathbf{H}$ is
denoted by $H_s$.

For $w\in W$ with a fixed reduced expression $w=s_1s_2\dots s_k$,
set $H_w:=H_{s_1}H_{s_2}\dots H_{s_k}$. Then 
$\{H_w\,:\, w\in W\}$ is an $\mathbb{A}$-basis of $\mathbf{H}$,
called the {\em standard basis}. 

Note that each $H_s$ is invertible in $\mathbf{H}$ with inverse 
$H_s^{-1}=H_s-(v-v^{-1})H_e$.
The algebra $\mathbf{H}$ admits a unique involution $\overline{\hspace{1mm}\cdot\hspace{1mm}}$, 
called the {\em bar-involution}, satisfying $\overline{H_s}=H_s^{-1}$
and $\overline{v}=v^{-1}$. Furthermore, $\mathbf{H}$ has a unique basis
$\{\underline{H}_w\,:\,  w\in W\}$ such that, for $w\in W$, we have:
\begin{itemize}
\item $\displaystyle\underline{H}_w\in H_w+\sum_{x\in W}v\mathbb{Z}[v]H_x$;
\item $\overline{\underline{H}_w}=\underline{H}_w$.
\end{itemize}
This basis is called the {\em Kazhdan-Lusztig (KL) basis}, see \cite{KL}
and note that we are using the normalization of \cite{So2}.

\subsection{Monoid homomorphisms from $\mathbf{SB}(W)$ to $\mathbf{H}$}\label{s9.2}

Fix a map $\Phi:K\to \mathbb{A}[x,x^{-1}]$. Define
\begin{displaymath}
\Upsilon_\Phi(\sigma_s)=H_s\text{ and }
\Upsilon_\Phi(\tau_s)=\Phi(K_s)[H_s],\text{ for }s\in S.
\end{displaymath}

\begin{corollary}\label{corn9.2.1}
The above assignment extends uniquely to a monoid homomorphism
$$\Delta_\Phi:\mathbf{SB}(W)\to \mathbf{H}(W).$$
\end{corollary}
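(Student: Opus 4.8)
The plan is to realize this map as the composite of the generalized desingularization map over the base ring $\mathbb{A}$ with the defining quotient map onto the Hecke algebra, so that the statement becomes a genuine corollary of Proposition~\ref{prop2}. First I would note that the proof of Proposition~\ref{prop2} uses nothing about the coefficient ring beyond commutativity: every manipulation takes place among relations that already hold in the Artin group $\mathbf{B}(W)$ (hence in its group algebra over any commutative ring), together with the single additional input that $\Phi(K_{(-)})$ is constant on the connected components of the odd skeleton. Thus, exactly as recorded in Subsection~\ref{s3.5} for an arbitrary commutative semiring, the same argument carried out with $R=\mathbb{A}=\mathbb{Z}[v,v^{-1}]$ yields a monoid homomorphism
\begin{displaymath}
\widetilde{\Delta}_\Phi:\mathbf{SB}(W)\to \mathbb{A}(\mathbf{B}(W)),\qquad
\sigma_s\mapsto\sigma_s,\quad \tau_s\mapsto\Phi(K_s)[\sigma_s].
\end{displaymath}

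Next I would compose with the defining surjection $\pi:\mathbb{A}(\mathbf{B}(W))\tto\mathbf{H}(W)$. Since $\mathbf{H}(W)$ is by construction the quotient of $\mathbb{A}(\mathbf{B}(W))$ by a two-sided ideal, $\pi$ is an $\mathbb{A}$-algebra homomorphism, hence in particular multiplicative, so it is a monoid homomorphism for the multiplicative structure. The composite $\pi\circ\widetilde{\Delta}_\Phi:\mathbf{SB}(W)\to\mathbf{H}(W)$ is therefore a monoid homomorphism, and I claim it is the map $\Delta_\Phi$ sought in the statement.

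The final step is to match this composite with the prescribed values on generators. On $\sigma_s$ this is immediate, since $\pi(\sigma_s)=H_s$. For $\tau_s$, the point is that $\pi$ is a ring homomorphism carrying the unit $\sigma_s$ to the unit $H_s$, so $\pi(\sigma_s^{-1})=H_s^{-1}$; here I would use the invertibility of $H_s$ in $\mathbf{H}(W)$ recorded just before the statement, which is precisely what makes the substitution $\Phi(K_s)[H_s]$ meaningful. Applying $\pi$ coefficientwise to the Laurent expression $\Phi(K_s)[\sigma_s]$ then gives $\pi(\Phi(K_s)[\sigma_s])=\Phi(K_s)[H_s]$, as required. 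Uniqueness of the extension needs no separate argument, because $\mathbf{SB}(W)$ is generated as a monoid by the $\sigma_s$ and $\tau_s$, so any monoid homomorphism out of it is determined by its values on these generators. I expect no genuine obstacle in this argument; the only point that deserves even mild attention is the well-definedness of the substitution $x\mapsto H_s$, which is exactly why the invertibility of $H_s$ is isolated in the preceding paragraph of the paper.
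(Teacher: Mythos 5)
Your argument is correct and is exactly the intended one: the paper states this as a corollary precisely because it follows by composing the $\mathbb{A}$-coefficient version of Proposition~\ref{prop2} (whose proof, as you note, is insensitive to the coefficient ring and is recorded in that generality in Subsection~\ref{s3.5}) with the defining algebra surjection $\mathbb{A}(\mathbf{B}(W))\tto\mathbf{H}(W)$. Your attention to the well-definedness of the substitution $x\mapsto H_s$ via the invertibility of $H_s$ is the right point to isolate, and the uniqueness argument from generation by the $\sigma_s$ and $\tau_s$ is standard and complete.
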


A very special case of the above construction is the case when $\Phi$ sends each $K_s$
to $v+x$. In this case $\Upsilon_\Phi(\tau_s)=\underline{H}_s$.

We note that the restriction of $\Delta_\Phi$ to $\mathbf{B}(W)$ is not
faithful,  in general.

\section{Diagram algebras and singular Artin monoids}\label{s4}

In this section we investigate relations between singular Artin monoids and various 
families of diagram algebras.

\subsection{The group algebra of the Coxeter group}\label{s4.1}

\subsubsection{Connection via a generalized desingularization map}\label{s4.1.1}

Let $M$ be a Coxeter matrix with the associated Coxeter group $W$, 
the Artin braid group $\mathbf{B}(W)$ and the singular Artin monoid $\mathbf{SB}(W)$.
For a fixed map $\Phi:K\to \mathbb{Z}[x,x^{-1}]$, we have the monoid homomorphism
\begin{displaymath}
\mathbf{SB}(W)\to \mathbb{Z}(\mathbf{B}(W))\tto \mathbb{Z}(W), 
\end{displaymath}
where the first map is $\Delta_\Phi$ and the second map is the natural epimorphism.
This induces an algebra epimorphism $\overline{\Delta}_\Phi:\mathbb{Z}(\mathbf{SB}(W))\to 
\mathbb{Z}(W)$.

Since $\overline{\Delta}_\Phi(\sigma_s)=\overline{\Delta}_\Phi(\sigma_s^{-1})$, the number
of independent parameters in this construction is $2\cdot |K|$ as, for each 
element of $K$, without loss of generality, we may assume that its image under $\Phi$ 
is of the form $a+bx$.

\subsubsection{Pulling back Coxeter group modules}\label{s4.1.2}

Pulling back via $\overline{\Delta}_\Phi$ defines a functor from $W$-mod to $\mathbf{SB}(W)$-mod.
Generically, this gives a $2\cdot |K|$-parametric family of lifts to $\mathbf{SB}(W)$ of
simple $W$-modules. 

We note that, for some modules, the number of essential parameters might 
be lower. For example, this happens if the action of $s$ and $e$ on such a module are linearly dependent.
This is true, for example, for the trivial and the sign $W$-modules. In these cases we only have 
a $|K|$-parametric family of different lifts.

\subsection{Binary relations}\label{s4.7}

\subsubsection{The semigroup of binary relations}\label{s4.7.1}

For $n\in\mathbb{Z}_{\geq 1}$, set  $\underline{n}:=\{1,2,\dots,n\}$. 
Consider the semigroup $\mathbf{Bin}_n$ of all binary relations on $\underline{n}$,
see e.g. \cite{PW70}.
Binary relations on $\underline{n}$ are in obvious bijection with $n\times n$ 
matrices over the boolean semiring $\mathtt{B}$ and the semigroup structure is given by the 
usual matrix multiplication. We can alternatively view a binary relation on
$\underline{n}$ as a bipartite graph whose both parts are given by $\underline{n}$.
Here is an example:
\begin{displaymath}
\begin{array}{ccc}
{}^{\left(\begin{array}{cccc}1&0&0&1\\0&0&1&1\\1&1&0&0\\0&0&1&1\\\end{array}\right)}& 
\quad {}^{\leftrightarrow}\quad       &
{}^{{}_{\xymatrix{1\ar@{-}[d]\ar@{-}[drr]&2\ar@{-}[dr]&3\ar@{-}[dl]\ar@{-}[dr]&4\ar@{-}[dlll]\ar@{-}[dll]\ar@{-}[d]
\\1&2&3&4}}}
\end{array}
\end{displaymath}
For a binary relation $\rho$, we denote the corresponding bipartite graph by 
$\Gamma_\rho$.

The symmetric group  $S_n$ is the group of invertible elements in $\mathbf{Bin}_n$.
As usual, we denote by $s_i$ the transposition $(i,i+1)\in S_n$, where
$i=1,2,\dots,n-1$.

Denote by $\mathbf{Bin}_n^\mathrm{ess}$ the subsemigroup of $\mathbf{Bin}_n$ consisting of
all  {\em essential} binary relations, that is, $\rho\in \mathbf{Bin}_n$ satisfying the 
conditions that, for any $x\in\underline{n}$, there are $y,z\in\underline{n}$
such that $(x,y)\in\rho$  and $(z,x)\in\rho$. In the matrix language, these are 
those boolean matrices in which each row and each column is non-zero.

\subsubsection{Map from the singular braid monoid}\label{s4.7.2}

For $i=1,2,\dots,n-1$, we denote by $\mathbf{s}_i$ the binary relation which is obtained 
from the equality relation by adding $(i,i+1)$ and $(i+1,i)$.
For example,  here are the graphs of the elements $\mathbf{s}_1$, $\mathbf{s}_2$
and $\mathbf{s}_3$ in $\mathbf{Bin}_4$:
\begin{displaymath}
\xymatrix@C=2mm@R=2mm{
1\ar@{-}[d]\ar@{-}[dr]&2\ar@{-}[d]\ar@{-}[dl]&3\ar@{-}[d]&4\ar@{-}[d]\\
1&2&3&4}\qquad\qquad
\xymatrix@C=2mm@R=2mm{
1\ar@{-}[d]&2\ar@{-}[d]\ar@{-}[dr]&3\ar@{-}[d]\ar@{-}[dl]&4\ar@{-}[d]\\
1&2&3&4}\qquad\qquad
\xymatrix@C=2mm@R=2mm{
1\ar@{-}[d]&2\ar@{-}[d]&3\ar@{-}[d]\ar@{-}[dr]&4\ar@{-}[d]\ar@{-}[dl]\\
1&2&3&4}
\end{displaymath}

\begin{proposition}\label{prop-4.7.2-1}
There is a unique homomorphism $\eta:\mathbf{SB}(S_n)\to\mathbf{Bin}_n$ such that
\begin{displaymath}
\eta(\sigma_{s_i})=s_i\text{ and }
\eta(\tau_{s_i})=\mathbf{s}_i,\text{ for }i=1,2,\dots,n-1.
\end{displaymath}
\end{proposition}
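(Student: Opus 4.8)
The uniqueness is immediate: the monoid $\mathbf{SB}(S_n)$ is generated by the elements $\sigma_{s_i}$ and $\tau_{s_i}$, so any homomorphism out of it is determined by the prescribed images of these generators. Hence the entire content lies in existence, that is, in verifying that the elements $s_i,\mathbf{s}_i\in\mathbf{Bin}_n$ satisfy the defining relations \eqref{sbm-eq1}--\eqref{sbm-eq5} of $\mathbf{SB}(S_n)$. The plan is to avoid checking these by hand and instead to exhibit $\eta$ as a composite of homomorphisms that are already available, so that the relations are inherited from Proposition~\ref{prop4}.

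First I would note that, in type $A$, the odd skeleton $\mathbf{O}(\Gamma)$ coincides with $\Gamma$ and is connected, so the set $K$ of its components is a singleton. Choosing $\Phi\colon K\to\mathtt{B}(\mathbb{Z})$ to send the unique component to $\{0,1\}$, Proposition~\ref{prop4} produces a monoid homomorphism $\Delta_\Phi\colon\mathbf{SB}(S_n)\to\mathtt{B}(\mathbf{B}(S_n))$ with $\Delta_\Phi(\sigma_{s_i})=\{\sigma_{s_i}\}$ and $\Delta_\Phi(\tau_{s_i})=\{e,\sigma_{s_i}\}$. I would then compose this with two further monoid homomorphisms: the map $\rho\colon\mathtt{B}(\mathbf{B}(S_n))\to\mathtt{B}(S_n)$ induced on finite subsets by the natural projection $\mathbf{B}(S_n)\tto S_n$, and the \emph{permutation representation} $\pi\colon\mathtt{B}(S_n)\to\mathbf{Bin}_n$ sending a finite subset $A\subseteq S_n$ to the Boolean union $\bigvee_{a\in A}P_a$ of the corresponding permutation matrices.

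The one point that genuinely needs checking is that $\pi$ is a monoid homomorphism. This follows from the elementary identity, valid in Boolean matrix arithmetic, that $\bigl(\bigvee_{a\in A}P_a\bigr)\bigl(\bigvee_{b\in B}P_b\bigr)=\bigvee_{a\in A,\,b\in B}P_aP_b=\bigvee_{c\in AB}P_c=\pi(AB)$, using $P_aP_b=P_{ab}$ for permutation matrices. Granting this, the composite $\eta:=\pi\circ\rho\circ\Delta_\Phi$ is a monoid homomorphism, and on generators it sends $\sigma_{s_i}\mapsto\{s_i\}\mapsto P_{s_i}=s_i$, while $\tau_{s_i}\mapsto\{e,s_i\}\mapsto P_e\vee P_{s_i}$, which is exactly the equality relation together with the extra pairs $(i,i+1)$ and $(i+1,i)$, i.e. $\mathbf{s}_i$. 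This is the required $\eta$.

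Since the defining relations are already established abstractly upstairs in $\mathtt{B}(\mathbf{B}(S_n))$ by Proposition~\ref{prop4}, no relation needs to be re-examined in $\mathbf{Bin}_n$; the only real obstacle is the bookkeeping that $\pi$ respects multiplication and that $P_e\vee P_{s_i}=\mathbf{s}_i$. Should one instead prefer a direct verification inside $\mathbf{Bin}_n$, all relations except \eqref{sbm-eq2} are routine: the commuting relations \eqref{sbm-eq3} and \eqref{sbm-eq4} hold because the relevant generators act on disjoint coordinates, \eqref{sbm-eq1} is just the braid relation inside $S_n\subset\mathbf{Bin}_n$, and \eqref{sbm-eq5} gives $\mathbf{s}_is_i=s_i\mathbf{s}_i=\mathbf{s}_i$. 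The only relation carrying real content is the mixed braid relation $\mathbf{s}_is_{i+1}s_i=s_{i+1}s_i\mathbf{s}_{i+1}$, which via the subset picture $\{e,s_i\}\{s_{i+1}\}\{s_i\}=\{s_{i+1}\}\{s_i\}\{e,s_{i+1}\}$ reduces to the braid relation $s_is_{i+1}s_i=s_{i+1}s_is_{i+1}$ in $S_n$; this is precisely the point that the composite construction handles for free.
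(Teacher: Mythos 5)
Your argument is correct, but it takes a genuinely different route from the paper's. The paper's own proof is a direct (and deliberately terse) verification that the elements $s_i$ and $\mathbf{s}_i$ satisfy the analogues of the relations \eqref{sbm-eq1}--\eqref{sbm-eq5} inside $\mathbf{Bin}_n$; the introduction only informally remarks that $\eta$ is ``inspired by'' the desingularization map over the Boolean semiring. You turn that remark into the proof itself: you factor $\eta$ as $\pi\circ\rho\circ\Delta_\Phi$ with $\Phi$ sending the unique component of the (connected, in type $A$) odd skeleton to $\{0,1\}$, so that all relation-checking is delegated to Proposition~\ref{prop4}, and what remains is only the observation that the Boolean permutation representation $A\mapsto\bigvee_{a\in A}P_a$ is multiplicative (which holds because Boolean matrix multiplication distributes over entrywise join and $P_aP_b=P_{ab}$) together with the identification $P_e\vee P_{s_i}=\mathbf{s}_i$. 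What your route buys is conceptual economy and a transparent explanation of where $\mathbf{s}_i$ comes from -- it is literally the image of $\{e,\sigma_{s_i}\}$ -- and it also makes visible that the image of $\eta$ sits inside the factor power of $S_n$, as the paper notes afterwards. What it costs is a dependence on Proposition~\ref{prop4}, which the paper states only as ``the obvious analogue of Proposition~\ref{prop2}'' without a written proof; since the proof of Proposition~\ref{prop2} uses no subtraction, it does carry over to an arbitrary commutative semiring, but the five families of relations are still being verified somewhere, just once and upstairs in $\mathtt{B}(\mathbf{B}(W))$ rather than in $\mathbf{Bin}_n$. Your closing sketch of the direct check, reducing the only substantive relation \eqref{sbm-eq2} to the braid relation in $S_n$ via $\{e,s_i\}\{s_{i+1}\}\{s_i\}=\{s_{i+1}s_i,\,s_is_{i+1}s_i\}$, is exactly what the paper's ``straightforward verification'' amounts to, so the two approaches agree in substance while differing in packaging.
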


\begin{proof}
This follows from the straightforward verification of the analogues  of the relations 
\eqref{sbm-eq1}, \eqref{sbm-eq2}, \eqref{sbm-eq3}, \eqref{sbm-eq4} and \eqref{sbm-eq5} for the 
elements $s_i$  and $\mathbf{s}_i$.  
\end{proof}

The submonoid of $\mathbf{Bin}_n$ generated by the elements $\mathbf{s}_i$ was studied in
\cite{MaSt} under the name {\em double Catalan monoid}. The image of $\eta$ belongs to
the factor power $\mathcal{FP}(S_n)$ of $S_n$ introduced in \cite{GM2}
and studied in \cite{GM3,GM4,GM5,Ma00,Ma09,Hu10}.

Note that the image of $\eta$ is contained in $\mathbf{Bin}_n^\mathrm{ess}$.
Denote by $\overline{\eta}$ the restriction of $\eta$ to the codomain 
$\mathbf{Bin}_n^\mathrm{ess}$.

\subsubsection{Type $B$ binary relations}\label{s4.7.3}

In the rest of this section,  we will often denote $-x$ by $\overline{x}$.
In particular, $\overline{\overline{x}}=x$.

For $n\in\mathbb{Z}_{\geq 1}$, set  $\underline{\overline{n}}:=\{\overline{1},\overline{2},\dots,\overline{n}\}$. 
Consider the monoid $\mathbf{Bin}^{(2)}_n$ of all binary relations 
on $\underline{\overline{n}}\cup\underline{n}$.
The identity in $\mathbf{Bin}^{(2)}_n$ is the diagonal binary relation 
$\Delta=\{(i,i)\,:\, i\in \underline{\overline{n}}\cup\underline{n}\}$.
Consider the anti-diagonal binary relation
$\nabla=\{(i,\overline{i})\,:\, i\in \underline{\overline{n}}\cup\underline{n}\}$
and denote by $\mathbf{Bin}_n^B$ the centralizer of $\nabla$ in $\mathbf{Bin}^{(2)}_n$.
As we will see later, this is an appropriate Coxeter type $B$ analogue of 
the semigroup $\mathbf{Bin}_n$.

It is convenient to think about the elements in $\mathbf{Bin}^{(2)}_n$
as $2n\times 2n$ boolean matrices whose columns are indexed by
$\overline{n},\overline{n-1},\dots,n$ left-to-right and whose rows are indexed by
$\overline{n},\overline{n-1},\dots,n$ top-to-bottom. In this realization, the elements 
of $\mathbf{Bin}_n^B$ are exactly the matrices which are invariant
under the $180^\circ$ rotation with respect to the center of the matrix.

In terms  of $\Gamma_\rho$, if both parts of $\Gamma_\rho$ are given by
$\overline{n},\overline{n-1},\dots,n$ left-to-right, then $\rho\in\mathbf{Bin}_n^B$ if and only
if $\Gamma_\rho$ is invariant under the flip which swaps $i$ and $\overline{i}$
in both parts.

The group of invertible elements in $\mathbf{Bin}_n^B$ is naturally isomorphic
to the group $\{\pm 1\}\wr S_n$ of signed permutations. The latter is
a standard realization of the Coxeter group of type $B_n$.

\subsubsection{Map from a type $B$ singular Artin monoid}\label{s4.7.4}

Let $W$ be the Coxeter group corresponding to the following (type $B$) Coxeter graph:
\begin{displaymath}
\xymatrix{s_0\ar@{-}[r]^4&s_1\ar@{-}[r]&s_2\ar@{-}[r]&\dots\ar@{-}[r]&s_{n-1}} 
\end{displaymath}

Denote by $\tilde{s}_0$ the element
of $\mathbf{Bin}_n^B$ which is obtained 
from the equality relation by removing $(1,1)$ and $(\overline{1},\overline{1})$
and adding $(1,\overline{1})$, $(\overline{1},1)$. In other words,
this is the transposition of $1$ and $\overline{1}$.
Further, for all $i=1,2,\dots,n-1$, we denote by $\tilde{s}_i$ the 
transposition of $i$ and $i+1$ and, simultaneously, of
$\overline{i}$ and $\overline{i+1}$. Then
$s_i\mapsto \tilde{s}_i$, for $i=1,2,\dots,n-1$, gives rise to a
natural monomorphism from $S_n$ to $\{\pm 1\}\wr S_n$.

For $i=1,2,\dots,n-1$, we denote by $\tilde{\mathbf{s}}_i$ the element
of $\mathbf{Bin}_n^B$ which is obtained 
from the equality relation by adding $(i,i+1)$, $(i+1,i)$, 
$(\overline{i},\overline{i+1})$ and $(\overline{i+1},\overline{i})$.
We also denote by $\tilde{\mathbf{s}}_0$ the 
element of $\mathbf{Bin}_n^B$ which is obtained 
from the equality relation by adding $(1,\overline{1})$ and $(\overline{1},1)$.
For example,  here are the graphs of the elements $\tilde{\mathbf{s}}_0$
and $\tilde{\mathbf{s}}_1$ in $\mathbf{Bin}_2^B$:
\begin{displaymath}
\xymatrix@C=2mm@R=2mm{
\overline{2}\ar@{-}[d]&\overline{1}\ar@{-}[d]\ar@{-}[dr]&1\ar@{-}[d]\ar@{-}[dl]&2\ar@{-}[d]\\
\overline{2}&\overline{1}&1&2}\qquad\qquad\qquad
\xymatrix@C=2mm@R=2mm{
\overline{2}\ar@{-}[dr]\ar@{-}[d]&\overline{1}\ar@{-}[d]\ar@{-}[dl]
&1\ar@{-}[d]\ar@{-}[dr]&2\ar@{-}[d]\ar@{-}[dl]\\
\overline{2}&\overline{1}&1&2}
\end{displaymath}

\begin{proposition}\label{prop-4.7.4-1}
There is a unique homomorphism $\boldsymbol{\eta}:\mathbf{SB}(W)\to\mathbf{Bin}_n^B$
such that
\begin{displaymath}
\boldsymbol{\eta}(\sigma_{s_i})=\tilde{s}_i\text{ and }
\boldsymbol{\eta}(\tau_{s_i})=\tilde{\mathbf{s}}_i,\text{ for }i=0,1,\dots,n-1.
\end{displaymath}
\end{proposition}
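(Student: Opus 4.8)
The uniqueness is immediate: since $\mathbf{SB}(W)$ is generated by the elements $\sigma_{s_i}$ and $\tau_{s_i}$, any homomorphism out of it is determined by their images. For existence, rather than verifying the relations \eqref{sbm-eq1}--\eqref{sbm-eq5} by hand, my plan is to realize $\boldsymbol{\eta}$ as a composite of two maps that are already available, so that the relations hold automatically.

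First I would record that $\mathbf{Bin}_n^B$ is closed under union of relations as well as under composition: a union of two $\nabla$-centralizing (equivalently, flip-invariant) relations is again flip-invariant. Hence $\mathbf{Bin}_n^B$ is a subsemiring of the Boolean matrix semiring $\mathbf{Bin}^{(2)}_n=\mathrm{M}_{2n}(\mathtt{B})$, in which composition distributes over union. Next, the signed permutations $\tilde{s}_0,\tilde{s}_1,\dots,\tilde{s}_{n-1}$ are precisely the Coxeter generators of $\{\pm 1\}\wr S_n\cong W$ realized inside $\mathbf{Bin}_n^B$, so in particular they satisfy the braid relations \eqref{sbm-eq1}. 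Therefore $\sigma_{s_i}\mapsto\tilde{s}_i$ extends to a group homomorphism $\rho:\mathbf{B}(W)\to W\inj\mathbf{Bin}_n^B$, and its Boolean-linear extension gives a multiplicative homomorphism $\tilde{\rho}:\mathtt{B}(\mathbf{B}(W))\to\mathbf{Bin}_n^B$ sending a finite subset $\{g_1,\dots,g_k\}$ to $\rho(g_1)\vee\cdots\vee\rho(g_k)$ (this respects the Minkowski product because composition of Boolean matrices distributes over union).

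Now I would invoke Proposition~\ref{prop4}. The odd skeleton of the type $B_n$ graph has two connected components, namely $\{s_0\}$ and the type $A$ path $\{s_1,\dots,s_{n-1}\}$, since the edge marked $4$ is even and is removed. Choosing $\Phi(K_s)=\{0,1\}$ on both components produces a monoid homomorphism $\Delta_\Phi:\mathbf{SB}(W)\to\mathtt{B}(\mathbf{B}(W))$ with $\Delta_\Phi(\sigma_{s_i})=\sigma_{s_i}$ and $\Delta_\Phi(\tau_{s_i})=\{e,\sigma_{s_i}\}$. I then claim $\boldsymbol{\eta}=\tilde{\rho}\circ\Delta_\Phi$. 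On the regular generators, $\tilde{\rho}(\Delta_\Phi(\sigma_{s_i}))=\rho(\sigma_{s_i})=\tilde{s}_i$; on the singular generators, $\tilde{\rho}(\Delta_\Phi(\tau_{s_i}))=\rho(e)\vee\rho(\sigma_{s_i})=\Delta\vee\tilde{s}_i$, where $\Delta$ is the diagonal relation. The one point requiring care is the identity $\Delta\vee\tilde{s}_i=\tilde{\mathbf{s}}_i$ for every $i$: this is transparent for $i\geq 1$, and at the special node it uses that $\tilde{s}_0$ deletes the diagonal entries at $1,\overline{1}$ while the join with $\Delta$ restores them, reproducing $\tilde{\mathbf{s}}_0$ exactly. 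Being a composite of two homomorphisms, $\boldsymbol{\eta}$ is then automatically a monoid homomorphism, which completes the argument.

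Alternatively, one could follow the direct route of Proposition~\ref{prop-4.7.2-1} and check \eqref{sbm-eq1}--\eqref{sbm-eq5} on the images. Here all relations involving only indices $\geq 1$ would follow from the type $A$ statement via the doubling homomorphism $\mathbf{Bin}_n\to\mathbf{Bin}_n^B$ that applies a relation simultaneously to the barred and unbarred copies and sends $s_i\mapsto\tilde{s}_i$, $\mathbf{s}_i\mapsto\tilde{\mathbf{s}}_i$; the relations pairing $s_0$ with a far node $s_j$ (where $m_{s_0,s_j}=2$) hold since the supports are disjoint. The only substantive computations are the length-$4$ relations at the type $B$ node: the braid relation \eqref{sbm-eq1} between $\tilde{s}_0$ and $\tilde{s}_1$ (a group relation in $W$), and the two instances of the even mixed relation \eqref{sbm-eq3}, which have no type $A$ analogue. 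I expect these last two identities to be the main obstacle in this hands-on approach; they localize to $\{\overline{2},\overline{1},1,2\}$ and must be confirmed by an explicit composition of the corresponding $180^\circ$-symmetric Boolean matrices. The factorization above sidesteps this computation entirely, which is why I would prefer it.
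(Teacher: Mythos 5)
Your argument is correct, and it takes a genuinely different route from the paper, whose proof of this proposition is a direct verification of the relations \eqref{sbm-eq1}--\eqref{sbm-eq5} for the elements $\tilde{s}_i$ and $\tilde{\mathbf{s}}_i$ (your ``alternative'' second paragraph is essentially the paper's proof, and you correctly locate where the real work sits in that approach, namely the length-$4$ mixed relations at the node $s_0$). Your factorization $\boldsymbol{\eta}=\tilde{\rho}\circ\Delta_\Phi$ is sound: the identity $\Delta\vee\tilde{s}_i=\tilde{\mathbf{s}}_i$ checks out for all $i$ including $i=0$, the Boolean-linear extension $\tilde{\rho}$ is multiplicative because Boolean matrix composition distributes over union (which also gives closure of $\mathbf{Bin}_n^B$ under union), and the choice $\Phi\equiv\{0,1\}$ on both components of the odd skeleton of the $B_n$ graph is legitimate. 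This is in fact the conceptual origin the paper itself alludes to in the introduction (``the image of $\tau_s$ should be thought of as the set $\{e,\sigma_s\}$''), but the paper never promotes it to a proof. What your route buys is that the only type-$B$-specific computation is the Coxeter-group identity $s_0s_1s_0s_1=s_1s_0s_1s_0$ together with distributivity; the remaining relation-checking is done once and for all in Proposition~\ref{prop2} and its semiring analogue Proposition~\ref{prop4}. The one caveat worth flagging is that Proposition~\ref{prop4} is itself only asserted in the paper as ``the obvious analogue'' of Proposition~\ref{prop2}; your argument is complete only because the proof of Proposition~\ref{prop2} transfers verbatim to an arbitrary commutative semiring (it uses nothing beyond relations among powers of the $\sigma_s$ and bilinearity), so the verification burden has been relocated rather than eliminated --- but relocated to a statement proved uniformly for all targets, which is a genuine gain. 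The paper's direct check, by contrast, requires no auxiliary machinery but must be redone for each new target monoid.
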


\begin{proof}
This follows from the straightforward verification of the analogues  of the relations 
\eqref{sbm-eq1}, \eqref{sbm-eq2}, \eqref{sbm-eq3}, \eqref{sbm-eq4} and \eqref{sbm-eq5} for the 
elements $\tilde{s}_i$  and $\tilde{\mathbf{s}}_i$.  
\end{proof}

The submonoid of $\mathbf{Bin}_n^B$ generated by the elements $\tilde{\mathbf{s}}_i$ 
is a natural type $B$ analogue of the {\em double Catalan monoid},
see also \cite[Subsection~6.2]{MaSt}.

\subsection{Dual symmetric inverse monoid}\label{s4.3}

\subsubsection{Dual symmetric inverse monoid and its factorizable submonoid}\label{s4.3.1}

For a positive integer $n$, consider
the symmetric inverse monoid $\mathbf{IS}_n$, defined as the set of 
all bijections between subsets of $\underline{n}$. 
We will talk about $\mathbf{IS}_n$ in more detail in Subsection~\ref{s4.2}.
In this subsection we will focus on the dual  object, called the 
{\em dual symmetric inverse monoid}  $\mathbf{I}^*_n$  which is defined as the 
set of all bijections between quotients of $\underline{n}:=\{1,2,\dots,n\}$, see \cite{FL06}.
The reason why we start with $\mathbf{I}^*_n$ will be explained in Lemma~\ref{lem-isn-dual}.

We can view each element of $\mathbf{I}^*_n$ as a binary relation on 
$\underline{n}$ in the natural way. That is, given two set partitions
\begin{displaymath}
A_1\coprod A_2\coprod\dots \coprod A_k=
\underline{n}=B_1\coprod B_2\coprod \dots\coprod B_k
\end{displaymath}
and $\sigma\in S_k$, the (unique) element $\xi$ of $\mathbf{I}^*_n$ which sends 
$A_i$ to $B_{\sigma(i)}$, for $i=1,2,\dots,k$, corresponds to the binary relation 
\begin{displaymath}
\rho_{{}_\xi}:=\bigcup_{i=1}^k  (B_{\sigma(i)}\times A_i).
\end{displaymath}
Note that $\rho_{{}_\xi}$ is essential.
The bipartite graph $\Gamma_{\rho_{{}_\xi}}$ has the property that each connected
component of $\Gamma_{\rho_{{}_\xi}}$ is a complete bipartite graph. Conversely,
given an essential binary relation $\rho$ such that each connected component of 
$\Gamma_{\rho}$ is a complete bipartite graph, there exists $\xi\in \mathbf{I}^*_n$
such that $\rho=\rho_{{}_\xi}$. In fact, in Proposition~\ref{prop4.3.1-1} below
we show that $\mathbf{I}^*_n$ is a quotient of $\mathbf{Bin}_n^\mathrm{ess}$.

For $\rho\in\mathbf{Bin}_n$, denote by $\overline{\rho}$ the unique 
element in $\mathbf{Bin}_n$ such that $\Gamma_{\overline{\rho}}$
is obtained from $\Gamma_\rho$ by completing each connected component of 
$\Gamma_\rho$ to a complete bipartite graph.

\begin{proposition}\label{prop4.3.1-1}
\hspace{1mm}

\begin{enumerate}[$($a$)$]
\item\label{prop4.3.1-1.1} 
Call $\rho$ and $\rho'$ in $\mathbf{Bin}_n$ equivalent provided that 
$\overline{\rho}=\overline{\rho'}$. This equivalence relation is a 
congruence on $\mathbf{Bin}_n$.
\item\label{prop4.3.1-1.2}
The subsemigroup $\mathbf{Bin}_n^\mathrm{ess}$ is a union of congruence classes.
\item\label{prop4.3.1-1.3}
The quotient of $\mathbf{Bin}_n^\mathrm{ess}$ modulo
this congruence is isomorphic to $\mathbf{I}^*_n$ via the map which
sends $\xi\in \mathbf{I}^*_n$ to the 
equivalence class of $\rho_{{}_\xi}$.
\end{enumerate}
\end{proposition}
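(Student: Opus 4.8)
The plan is to treat the operation $\rho\mapsto\overline\rho$ as a closure operator and to identify its fixed points with the \emph{block relations} $\bigsqcup_i A_i\times B_i$, i.e.\ those relations whose graph has every connected component complete bipartite. First I would record the three defining properties: $\rho\subseteq\overline\rho$; idempotence $\overline{\overline\rho}=\overline\rho$; and monotonicity, which follows from the observation that enlarging $\rho$ can only coarsen the partition of $\underline n\sqcup\underline n$ into connected components of $\Gamma_\rho$, so that every rectangle of $\overline\rho$ sits inside a rectangle of $\overline{\rho'}$ whenever $\rho\subseteq\rho'$. In this language an equivalence class is precisely a fibre of the retraction $\rho\mapsto\overline\rho$, so each class contains a unique block relation, namely its completion. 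Combined with the bijection (already noted before the proposition) between essential block relations and elements $\xi\in\mathbf{I}^*_n$ via $\xi\mapsto\rho_\xi$, this yields a bijection between the classes lying inside $\mathbf{Bin}_n^{\mathrm{ess}}$ and $\mathbf{I}^*_n$, which is the set-level content of part (c).

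For part (a) the content is to show that $\overline{\rho\sigma}$ depends only on $\overline\rho$ and $\overline\sigma$; equivalently, taking $\rho'=\overline\rho$ and $\sigma'=\overline\sigma$, that $\overline{\rho\sigma}=\overline{\overline\rho\cdot\overline\sigma}$. The natural device is the tripartite gluing graph $T(\rho,\sigma)$ on three copies $L,M,U$ of $\underline n$, with $L$--$M$ edges read off from $\rho$ and $M$--$U$ edges read off from $\sigma$. Then $(x,z)\in\rho\sigma$ exactly when $x\in L$ and $z\in U$ are joined by a length-two path through $M$, and the key lemma is that an $L$-vertex and a $U$-vertex lie in the same connected component of $\Gamma_{\rho\sigma}$ precisely when they lie in the same connected component of $T(\rho,\sigma)$. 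Passing from $\rho,\sigma$ to $\overline\rho,\overline\sigma$ only completes each $L$--$M$ and each $M$--$U$ block to a full rectangle, which should change neither the vertex sets of the components of $T$ nor the induced partition of $L\cup U$. I expect this connectivity lemma to be the main obstacle: one must verify that completing the two halves neither creates nor destroys $L$--$U$ connections, and the delicate points concentrate exactly at ``dead-end'' middle vertices of $M$, along which completion could otherwise spuriously merge rows or columns. This is the technical heart, and it is where the hypotheses controlling isolated and essential behaviour are really needed.

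Part (b) I expect to be quick. A relation $\rho$ is essential iff $\Gamma_\rho$ has no isolated vertex, and completing components to complete bipartite graphs neither isolates a vertex nor attaches a previously isolated one; hence $\Gamma_{\overline\rho}$ has an isolated vertex iff $\Gamma_\rho$ does, so essentiality is a property of $\overline\rho$ alone. Consequently $\rho\equiv\rho'$ with $\rho$ essential forces $\rho'$ essential, i.e.\ $\mathbf{Bin}_n^{\mathrm{ess}}$ is a union of equivalence classes.

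Finally, for the multiplicativity needed in part (c), I would show that the bijection $\xi\mapsto[\rho_\xi]$ of the first paragraph is a homomorphism by proving $\overline{\rho_\xi\,\rho_\zeta}=\rho_{\xi\zeta}$. Writing $\xi$ and $\zeta$ as block bijections and composing the rectangles $\rho_\xi=\bigcup_i B_{\sigma(i)}\times A_i$ and the analogous $\rho_\zeta$ as relations, the composite links a source block of $\zeta$ to a target block of $\xi$ exactly when the relevant intermediate blocks overlap; taking connected components, i.e.\ applying $\overline{(\,\cdot\,)}$, then realizes precisely the join of the two middle partitions that defines multiplication in the dual symmetric inverse monoid $\mathbf{I}^*_n$, see \cite{FL06}. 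Matching these two descriptions block by block gives $\overline{\rho_\xi\rho_\zeta}=\rho_{\xi\zeta}$, and together with the bijectivity from the first paragraph and the congruence from part (a) this identifies the quotient of $\mathbf{Bin}_n^{\mathrm{ess}}$ with $\mathbf{I}^*_n$.
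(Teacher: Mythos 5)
Your outline reproduces the paper's own strategy almost step for step: the paper likewise proves (a) by gluing the two bipartite graphs into a tripartite graph and asserting that the component partition of the product depends only on the component partitions of the factors, treats (b) as immediate, and obtains (c) by matching the completed product against the join of middle partitions defining multiplication in $\mathbf{I}^*_n$. Parts (b) and (c) of your proposal are fine. The problem is with the connectivity lemma you defer to as ``the main obstacle'': it is not merely delicate, it is false on all of $\mathbf{Bin}_n$, and with it part (a) as stated. Take $n=2$ and, as boolean matrices, $\rho=\left(\begin{smallmatrix}1&1\\0&1\end{smallmatrix}\right)$, $\rho'=\left(\begin{smallmatrix}1&0\\1&1\end{smallmatrix}\right)$, $\sigma=\left(\begin{smallmatrix}1&0\\0&0\end{smallmatrix}\right)$. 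Both $\Gamma_\rho$ and $\Gamma_{\rho'}$ are connected on all four vertices, so $\overline{\rho}=\overline{\rho'}$ is the full relation and $\rho\sim\rho'$; but $\rho\sigma=\left(\begin{smallmatrix}1&0\\0&0\end{smallmatrix}\right)$ while $\rho'\sigma=\left(\begin{smallmatrix}1&0\\1&0\end{smallmatrix}\right)$, and these two products are already component-complete and have different component partitions, so $\overline{\rho\sigma}\neq\overline{\rho'\sigma}$ (the other order of multiplication fails too). The failure is exactly at your ``dead-end'' middle vertex: the middle vertex $2$ has neighbours on the outer side coming from $\rho$ but none coming from $\sigma$, so a tripartite path that detours through it does not project to a path in the product graph, and completing $\rho$ to $\overline{\rho}$ spuriously reroutes connections through that dead end. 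Note that the proof in the paper rests on the same one-line assertion (``taking the union of connected graphs with a common vertex produces a connected graph'') and is subject to the same example, so part (a) must be read as a statement about $\mathbf{Bin}_n^{\mathrm{ess}}$.

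On the essential subsemigroup your plan does go through, and you should make the rescue explicit rather than leave it as an expectation. If both factors are essential, every middle vertex $m$ has a neighbour in each outer layer; hence a segment $l$--$m$--$l'$ of a path in $T(\rho,\sigma)$ can be replaced by $l$--$u$--$l'$ in $\Gamma_{\rho\sigma}$, where $u$ is any $U$-neighbour of $m$, and dually for $u$--$m$--$u'$ segments, so any tripartite path between two outer vertices projects to a path in the product graph. This proves your key lemma on $\mathbf{Bin}_n^{\mathrm{ess}}$; since the outer component partition of $T(\rho,\sigma)$ is just the join, over shared middle vertices, of the two given component partitions, it depends only on $\overline{\rho}$ and $\overline{\sigma}$, and the congruence property, part (b), and part (c) all follow as you describe. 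Everything used later in the paper (the maps $\boldsymbol{\pi}$, $\lambda$, and their type $B$ analogues) only involves essential relations, so this restriction costs nothing there.
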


\begin{proof}
Equivalence of $\rho$ and $\rho'$ can  be alternatively described
as follows: two vertices of $\Gamma_\rho$ belong to the same 
connected component if and only if the corresponding vertices 
of $\Gamma_{\rho'}$ belong to the same connected component. 

Let now $\rho_1$ and $\rho_2$ be two elements in $\mathbf{Bin}_n$. Then  the
product $\rho_1\rho_2$ can be described as follows in terms of 
$\Gamma_{\rho_1}$ and $\Gamma_{\rho_2}$: the graph $\Gamma_{\rho_1\rho_2}$ is
obtained by first taking the union of $\Gamma_{\rho_1}$ and $\Gamma_{\rho_2}$
under the assumption that the lower vertices of $\Gamma_{\rho_2}$ are 
identified with the corresponding upper vertices of  $\Gamma_{\rho_1}$ and then removing
these common vertices making the following adjustment of edges whenever possible
(here the {\color{red}red} vertex is removed):
\begin{displaymath}
\xymatrix@R=3mm@C=2mm{\bullet\ar@{-}[rd]&&\bullet\ar@{-}[ld]\\
&{\color{red}\bullet}\ar@{-}[d]&\\&\bullet&}\qquad\leadsto\qquad
\xymatrix@R=3mm@C=2mm{\bullet\ar@{-}[rdd]&&\bullet\ar@{-}[ldd]\\
&&\\&\bullet&}
\end{displaymath}
Taking the union of connected graphs with a common vertex produces a
connected graph. This implies the following: assume that 
$\rho_1$ and $\rho'_1$ are equivalent and $\rho_2$ and $\rho'_2$
are equivalent. Then, for any fixed vertex, while the connected components of
this vertex in $\Gamma_{\rho_1\rho_2}$ and $\Gamma_{\rho'_1\rho'_2}$ might 
be non-isomorphic as graphs, the sets of vertices in these two components
coincide. This means exactly that $\rho_1\rho_2$ and $\rho'_1\rho'_2$
are equivalent. This shows that our equivalence relation is a congruence,
proving Claim~\eqref{prop4.3.1-1.1}.

Claim~\eqref{prop4.3.1-1.2} follows directly from the definitions.
To prove Claim~\eqref{prop4.3.1-1.3}, we note that the
map which sends $\xi\in \mathbf{I}^*_n$ to the 
equivalence class of $\rho_{{}_\xi}$ in $\mathbf{Bin}_n$ is, clearly, injective.
Moreover, it is bijective if we restrict to $\mathbf{Bin}_n^\mathrm{ess}$
modulo the congruence in Claim~\eqref{prop4.3.1-1.1}.
The fact that it is a homomorphism of semigroups follows by comparing the 
definitions of multiplications in $\mathbf{I}^*_n$ and $\mathbf{Bin}_n^\mathrm{ess}$. 
\end{proof}

\begin{remark}\label{rem4.3.1-51}
{\sf
The quotient of $\mathbf{Bin}_n$ modulo the congruence given by 
Proposition~\ref{prop4.3.1-1}\eqref{prop4.3.1-1.1} is the  inverse semigroup of
all bijections between quotients  of subsets of $\underline{n}$
(alternatively, between subsets of quotients of $\underline{n}$).
This semigroup was studied in \cite{KuMa,KMU}.
}
\end{remark}

We denote by $\boldsymbol{\pi}:\mathbf{Bin}_n^\mathrm{ess}\to \mathbf{I}^*_n$ the surjective map
given by Proposition~\ref{prop4.3.1-1}.

The monoid $\mathbf{I}^*_n$ is an inverse semigroup. The semigroup 
$\mathbf{I}^*_n$ contains a subsemigroup, denoted $\mathbf{F}^*_n$,
defined by the condition that $|B_{\sigma(i)}|=|A_i|$, for all $i$, in the 
above notation. The subsemigroup $\mathbf{F}^*_n$
is usually called the {\em maximal factorizable submonoid} of $\mathbf{I}^*_n$.
The meaning of this is the following: 
The group of units of $\mathbf{I}^*_n$ is the symmetric
group $S_n$. As usual, 
we denote by $s_i$ the elementary transposition $(i,i+1)\in S_n$, for $i=1,2,\dots,n-1$.
The idempotents of $\mathbf{I}^*_n$ are in a natural bijection with
the equivalence relations on (alternatively, the set partitions of) $\underline{n}$.
In the above notation, to get an idempotent, we take
$A_i=B_i$, for all $i$, and $\sigma=e\in S_k$. The set $E(\mathbf{I}^*_n)$
of all idempotents of $\mathbf{I}^*_n$ is a commutative semigroup whose
operation can be interpreted as ``taking the minimal equivalence relation which
contains the two given equivalence relations''. Note that $E(\mathbf{I}^*_n)$
is closed under the conjugation by elements in $S_n$. For an equivalence relation
$\rho$ on $\underline{n}$, we denote by $\xi_\rho$ the corresponding idempotent
in $\mathbf{I}^*_n$. 

The subsemigroup $\mathbf{F}^*_n$ consists of all elements of the form
$\sigma\xi$, where $\sigma\in S_n$ and $\xi\in E(\mathbf{I}^*_n)$. That 
is the {\em factorization} mentioned in the name. We note that 
$\sigma\xi=\sigma'\xi'$ does not imply $\sigma=\sigma'$, in general
(but it does imply $\xi=\xi'$). The product of two elements in 
$\mathbf{F}^*_n$ can be computed as follows:
\begin{displaymath}
(\sigma'\xi')(\sigma\xi)=(\sigma'\sigma)\big((\sigma^{-1}\xi'\sigma)\xi\big). 
\end{displaymath}
The subsemigroup $\mathbf{F}^*_n$ is an inverse semigroup as well.

For $i\neq j$ in $\{1,2,\dots,n\}$, denote by $\xi_{\{i,j\}}$ the idempotent
in $\mathbf{F}^*_n$ corresponding to the equivalence relation
with equivalence classes $\{i,j\}$ and $\{s\}$, where $s\neq i,j$.
It is easy to check that $\mathbf{F}^*_n$ is generated by $S_n$
and any $\xi_{\{i,j\}}$. For $i=1,2,\dots,n-1$, set $\xi_i:=\xi_{\{i,i+1\}}$.
Note that $\xi_{i}=\boldsymbol{\pi}(\mathbf{s}_i)$.

\subsubsection{Map from singular braid monoid}\label{s4.3.2}

\begin{proposition}\label{prop-4.3.2-1}
There is a unique homomorphism $\lambda:\mathbf{SB}(S_n)\to\mathbf{F}^*_n$ such that
\begin{displaymath}
\lambda(\sigma_{s_i})=s_i\text{ and }
\lambda(\tau_{s_i})=\xi_{i},\text{ for }i=1,2,\dots,n-1.
\end{displaymath}
This homomorphism is surjective.
\end{proposition}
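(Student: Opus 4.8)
The plan is to obtain $\lambda$ as a composition of two homomorphisms already available in this section, rather than to reverify all the defining relations of $\mathbf{SB}(S_n)$ by hand (which would work, exactly as in the proof of Proposition~\ref{prop-4.7.2-1}, but is longer). Recall the homomorphism $\eta:\mathbf{SB}(S_n)\to\mathbf{Bin}_n$ of Proposition~\ref{prop-4.7.2-1} with $\eta(\sigma_{s_i})=s_i$ and $\eta(\tau_{s_i})=\mathbf{s}_i$, whose image lies in $\mathbf{Bin}_n^{\mathrm{ess}}$, so that we have $\overline{\eta}:\mathbf{SB}(S_n)\to\mathbf{Bin}_n^{\mathrm{ess}}$. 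Composing with the quotient map $\boldsymbol{\pi}:\mathbf{Bin}_n^{\mathrm{ess}}\to\mathbf{I}^*_n$ of Proposition~\ref{prop4.3.1-1}, I would set $\lambda:=\boldsymbol{\pi}\circ\overline{\eta}$, which is then automatically a monoid homomorphism.

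Next I would check the values on generators. For a permutation relation every connected component of the associated bipartite graph is a single edge, hence already a complete bipartite graph, so the congruence of Proposition~\ref{prop4.3.1-1} fixes it; in particular $\boldsymbol{\pi}(s_i)=s_i\in S_n\subseteq\mathbf{F}^*_n$. On the singular generators, the identity $\boldsymbol{\pi}(\mathbf{s}_i)=\xi_i$ was already recorded just before the statement. Thus $\lambda(\sigma_{s_i})=s_i$ and $\lambda(\tau_{s_i})=\xi_i$, as required. Since both $s_i$ and $\xi_i$ lie in the submonoid $\mathbf{F}^*_n$, and $\mathbf{F}^*_n$ is closed under products, the image of $\lambda$ is contained in $\mathbf{F}^*_n$, so $\lambda$ may be regarded as a homomorphism into $\mathbf{F}^*_n$. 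Uniqueness is immediate because the $\sigma_{s_i}$ and $\tau_{s_i}$ generate $\mathbf{SB}(S_n)$.

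For surjectivity I would invoke the generation statement recorded before the proposition, namely that $\mathbf{F}^*_n$ is generated by $S_n$ together with any single idempotent $\xi_{\{i,j\}}$. The image of $\lambda$ contains all the $s_i$, hence all of $S_n$, and it contains $\xi_1=\xi_{\{1,2\}}$; therefore it contains a generating set of $\mathbf{F}^*_n$ and must equal $\mathbf{F}^*_n$. Because every ingredient is already in place, there is no serious obstacle here: the only two points that genuinely use the structure built earlier are the verification that $\boldsymbol{\pi}$ sends the permutation relation $s_i$ to the permutation $s_i$ (which is where one uses that a permutation graph is already componentwise complete bipartite) and the appeal to the generation of $\mathbf{F}^*_n$ by $S_n$ and one transposition-type idempotent.
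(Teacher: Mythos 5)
Your proof is correct, but it takes a different route from the paper's. The paper establishes existence by directly verifying the defining relations \eqref{sbm-eq1}--\eqref{sbm-eq5} for the elements $s_i$ and $\xi_i$ inside $\mathbf{F}^*_n$, and only afterwards remarks, as a separate observation, that $\lambda=\boldsymbol{\pi}\circ\overline{\eta}$. You instead take that factorization as the \emph{definition} of $\lambda$, so that existence is automatic from Propositions~\ref{prop-4.7.2-1} and \ref{prop4.3.1-1}, and all that remains is to evaluate $\boldsymbol{\pi}\circ\overline{\eta}$ on the generators; your justification of $\boldsymbol{\pi}(s_i)=s_i$ (a permutation graph is already componentwise complete bipartite) and your appeal to the recorded identity $\boldsymbol{\pi}(\mathbf{s}_i)=\xi_i$ are both sound, as is the observation that the image then lies in the submonoid $\mathbf{F}^*_n$. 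The uniqueness and surjectivity arguments coincide with the paper's. What your approach buys is economy: the relation-checking is done once, in $\mathbf{Bin}_n$, where Proposition~\ref{prop-4.7.2-1} already carries it out, rather than being repeated in $\mathbf{F}^*_n$. What it costs is a dependence on the congruence machinery of Proposition~\ref{prop4.3.1-1}, whereas the paper's direct verification is self-contained at this point; the two are logically equivalent given what precedes the statement in the text.
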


\begin{proof}
The existence claim follows from the straightforward verification of the 
analogues  of the relations \eqref{sbm-eq1}, \eqref{sbm-eq2}, \eqref{sbm-eq3}, 
\eqref{sbm-eq4} and \eqref{sbm-eq5} for the elements $s_i$ and $\xi_{i}$.
The uniqueness claim follows from the fact that $\sigma_{s_i}$ and $\tau_{s_i}$
generate $\mathbf{SB}(S_n)$. Surjectivity follows by combining the fact that 
the $s_i$'th generate $S_n$ with the fact that $S_n$ and any $\xi_{i}$
generate $\mathbf{F}^*_n$.
\end{proof}

We note that $\lambda=\boldsymbol{\pi}\circ\overline{\eta}$.

\subsubsection{Presentation for $\mathbf{F}^*_n$}\label{s4.3.3}

\begin{proposition}\label{prop-4.3.3-1}
The monoid $\mathbf{F}^*_n$ is generated by the elements 
$s_i$ and $\xi_{i}$, where we have  $i=1,2,\dots,n-1$, subject to the relations
\eqref{sbm-eq1}, \eqref{sbm-eq2}, \eqref{sbm-eq3}, \eqref{sbm-eq4} and \eqref{sbm-eq5}
(for $s_i$  instead of $\sigma_{s_i}$ and $\xi_{i}$ instead  of $\tau_{s_i}$)
and,  additionally, the relations
\begin{eqnarray}
s_i^2&=&e,\quad \quad \text{ for } i=1,2,\dots,n-1; \label{prop-4.3.3-1.1}\\
\xi_{i}^2&=&\xi_{i},\quad \quad  \text{ for } i=1,2,\dots,n-1;\label{prop-4.3.3-1.2}\\
\xi_{i}\xi_{i+1}&=&\xi_{i+1}\xi_{i}, \quad 
\text{ for } i=1,2,\dots,n-2; \label{prop-4.3.3-1.3}\\
\xi_{i}s_i&=&\xi_{i},\quad \quad  \text{ for } i=1,2,\dots,n-1. \label{prop-4.3.3-1.4}
\end{eqnarray}
\end{proposition}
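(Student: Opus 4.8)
The plan is to regard the listed relations as defining an abstract monoid $M$ and to show that the canonical map onto $\mathbf{F}^*_n$ is an isomorphism by a normal form argument. First I would verify that the elements $s_i$ and $\xi_i=\xi_{\{i,i+1\}}$ of $\mathbf{F}^*_n$ satisfy all the relations: the relations \eqref{sbm-eq1}--\eqref{sbm-eq5} already hold by Proposition~\ref{prop-4.3.2-1}, while \eqref{prop-4.3.3-1.1}--\eqref{prop-4.3.3-1.4} are immediate from the description of $S_n$ as the unit group and of $E(\mathbf{I}^*_n)$ as the partition semilattice (note that $s_i$ fixes the block $\{i,i+1\}$ of $\xi_i$, which gives \eqref{prop-4.3.3-1.4}). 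Hence there is a surjective monoid homomorphism $\phi\colon M\tto\mathbf{F}^*_n$, and it remains to prove injectivity.

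Next I would build a normal form in $M$. Since \eqref{sbm-eq1} together with \eqref{prop-4.3.3-1.1} is exactly the Coxeter presentation of $S_n$, the submonoid of $M$ generated by the $s_i$ is a quotient of $S_n$; as the composite $S_n\to M\xrightarrow{\phi}\mathbf{F}^*_n$ is the injective inclusion of the unit group, this submonoid is a copy of $S_n$, which I identify with $S_n$. For each two-element subset $\alpha\subseteq\underline n$ I then define a conjugate idempotent $f_\alpha:=u\xi_i u^{-1}$, where $u\in S_n$ and $u\{i,i+1\}=\alpha$. The crucial lemma is that $f_\alpha$ is well defined, i.e.\ independent of the choice of $u$ and $i$; equivalently, that $v\xi_i v^{-1}=\xi_j$ whenever $v\{i,i+1\}=\{j,j+1\}$. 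This is exactly where the mixed relation \eqref{sbm-eq2} is used: together with \eqref{sbm-eq3}, \eqref{sbm-eq4} and \eqref{sbm-eq5} it shows that $\xi_i$ is invariant under conjugation by the stabiliser of $\{i,i+1\}$ and is transported correctly by the moving generators, so that $\alpha\mapsto f_\alpha$ is $S_n$-equivariant. Granting this, I would deduce, by choosing a common conjugator and appealing to \eqref{prop-4.3.3-1.3} and \eqref{sbm-eq4}, that the $f_\alpha$ are commuting idempotents; consequently any product of them depends only on the partition $\rho$ they generate, giving a well-defined idempotent $e_\rho\in M$ with $\phi(e_\rho)=\xi_\rho$. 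The conjugation rule $f_\alpha s_j=s_j f_{s_j\alpha}$ and the absorption $s\,e_\rho=e_\rho\,s=e_\rho$ for $s\in\mathrm{Stab}(\rho)$ (the general form of \eqref{prop-4.3.3-1.4}) follow in the same way.

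With these tools the reduction is routine: scanning a word in $M$ and repeatedly rewriting each occurrence of (idempotent)(permutation) as (permutation)(conjugated idempotent) via $f_\alpha s_j=s_j f_{s_j\alpha}$ pushes all permutation factors to the left, collecting them into some $w_0\in S_n$ and leaving a product of $f_\alpha$'s, which collapses to a single $e_\rho$. Writing $w_0=w\,r$ with $r\in\mathrm{Stab}(\rho)$ and $w$ the distinguished (say minimal length) representative of $w_0\,\mathrm{Stab}(\rho)$, absorption yields the normal form $w\,e_\rho$. To conclude injectivity I would observe that $\phi(w\,e_\rho)=w\,\xi_\rho$ has domain partition exactly $\rho$ and permutation part exactly the coset $w\,\mathrm{Stab}(\rho)$; since an element of $\mathbf{F}^*_n$ remembers both pieces of data, distinct normal forms have distinct images. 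Thus $\phi$ is injective on normal forms, and hence an isomorphism.

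The main obstacle is the well-definedness of the conjugate idempotents $f_\alpha$, that is, the $S_n$-equivariance of the idempotent structure on $M$: this is the one place where \eqref{sbm-eq2} is genuinely needed, and one must check that the base identities for $v=s_i$ and for $v=s_k$ with $|k-i|\ge 2$, together with the transport relation, generate all required identities consistently over the transitive $S_n$-set of two-element subsets of $\underline n$. Everything else --- the collapse of idempotent products and the coset reduction of the permutation prefix --- is then a formal consequence. An alternative, shorter route would be to derive the statement from one of the known presentations of $\mathbf{F}^*_n$ in \cite{Ea06b,EEF,Fi03} by Tietze transformations, matching generators and relations.
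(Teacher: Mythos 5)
Your proposal is correct in outline, but it takes a genuinely different route from the paper. The paper's proof is a short Tietze-style reduction to a known presentation: it observes that by \eqref{sbm-eq2} and \eqref{sbm-eq3} all $\xi_i$ are $S_n$-conjugates of $\xi_1$, so the generating set can be shrunk to $s_1,\dots,s_{n-1},\xi_1$, and then checks that the listed relations imply the relations (S), (F2)--(F6) of FitzGerald's presentation of $\mathbf{F}^*_n$ in \cite[Theorem~3]{Fi03}, from which the claim follows --- essentially the ``alternative, shorter route'' you mention in your last sentence. Your normal-form argument is instead self-contained, and it is in fact the same style of counting argument that the paper itself deploys for the type $B$ analogues in Propositions~\ref{prop-4.3.7-1} and \ref{prop-4.2.6-1}, so it is certainly viable. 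The one step you flag as the main obstacle, the well-definedness of $f_\alpha$, does go through cleanly if you base the conjugation at $\xi_1$: the setwise stabiliser of $\{1,2\}$ in $S_n$ is generated by $s_1$ and the $s_j$ with $j\ge 3$ (the complement $\{3,\dots,n\}$ is an interval, so no non-simple transpositions are needed), and these centralise $\xi_1$ by \eqref{prop-4.3.3-1.4} combined with \eqref{sbm-eq5}, respectively by \eqref{sbm-eq3}; consistency with the other $\xi_i$ is then exactly \eqref{sbm-eq2}. Likewise the collapse of idempotent products to $e_\rho$ reduces, after conjugation, to the identity $\xi_1\xi_2\,(s_2\xi_1 s_2)=\xi_1\xi_2\xi_1 s_2=\xi_1\xi_2 s_2=\xi_1\xi_2$, using \eqref{prop-4.3.3-1.2}--\eqref{prop-4.3.3-1.4}. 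What each approach buys: the paper's is two paragraphs but leans on an external reference; yours is longer but self-contained and extends to Coxeter types where no reference presentation is available.
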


The associative algebra defined by our presentation was studied in \cite{Ko}
(however, the relation to the diagrammatic realization for this algebra is not
explained with all details in this paper). A slight variation of this 
presentation is given in \cite{Fi03}, see also \cite{EEF} for
a general approach to presentation of factorizable inverse monoids.
Our proof is based on a direct reduction to the presentation in \cite[Theorem~3]{Fi03}.
Our presentation can also be deduced from \cite[Theorem~6.4 and Remark 6.5]{Ea06b}.

\begin{proof}
We show that our relations imply the relations in \cite[Theorem~3]{Fi03}.
We note that, in \cite[Theorem~3]{Fi03}, the generating set for $\mathbf{F}^*_n$
is smaller than ours, namely, it consists of the Coxeter generators for $S_n$
and $\xi_{1}$. It follows from the relations \eqref{sbm-eq2} and 
\eqref{sbm-eq3} that all other $\xi_{i}$ are
conjugates of $\xi_{1}$ under the action of the symmetric group
(and hence are redundant as generators).

Relations (S) in \cite[Theorem~3]{Fi03} are just the usual Coxeter relations
for the symmetric group. Relation (F2) in \cite[Theorem~3]{Fi03} is our
relation \eqref{prop-4.3.3-1.2}, Relation (F3) in \cite[Theorem~3]{Fi03} is 
a special case of our relation \eqref{prop-4.3.3-1.4} combined with 
\eqref{sbm-eq5}, Relation (F4) in \cite[Theorem~3]{Fi03} is a special case of 
our relation \eqref{sbm-eq3}, Relation (F5) in \cite[Theorem~3]{Fi03} is a special case of 
our relation \eqref{prop-4.3.3-1.3}, and, finally, Relation (F6) in 
\cite[Theorem~3]{Fi03} is a special case of  our relation \eqref{sbm-eq4}.
Therefore our claim follows from \cite[Theorem~3]{Fi03}.
\end{proof}

\subsubsection{Type $B$ dual symmetric inverse 
monoid and its factorizable submonoid}\label{s4.3.4}

The congruence on $\mathbf{Bin}_n$ described in Proposition~\ref{prop4.3.1-1}
restricts to the monoid $\mathbf{Bin}^{B}_n$. It is natural to call
the quotient of the submonoid
\begin{displaymath}
\mathbf{Bin}^{B,\mathrm{ess}}_n:=
\mathbf{Bin}^{B}_n\cap  \mathbf{Bin}^{(2),\mathrm{ess}}_n
\end{displaymath}
of $\mathbf{Bin}^{B}_n$ modulo this restricted congruence 
the {\em type $B$ dual symmetric inverse monoid}. 
We denote it by $\mathbf{IB}^*_n$.
The maximal factorizable submonoid of $\mathbf{IB}^*_n$ is denoted
$\mathbf{FB}^*_n$.

Note that the idempotents of $\mathbf{IB}^*_n$ are the
equivalence relations on $\underline{\overline{n}}\cup\underline{n}$
which are invariant the map $i\mapsto \overline{i}$. For an equivalence class
$X$ of such an equivalence relation,  there are two possibilities:
\begin{itemize}
\item $|X\cap\{i,\overline{i}\}|\leq 1$, for all $i\in\underline{n}$,
and, in this case, $\overline{X}$ is another equivalence class;
\item $|X\cap\{i,\overline{i}\}|=2$, for some $i\in\underline{n}$,
and, in this case, $\overline{X}=X$.
\end{itemize}
Such an equivalence relation can be described by a tuple
$(\rho,Y,f)$, where
\begin{itemize}
\item $\rho$ is an equivalence relation on $\underline{n}$,
\item $Y$ is the union of some of the equivalence classes of $\rho$,
\item $f:Y\to\{\pm\}$ is a function,
\end{itemize}
modulo the equivalence relation $\sim$ defined by 
$(\rho,Y,f)\sim(\rho,Y,f')$ provided that $f$ differs from $f'$
by changing the sign on the union of some of the equivalence classes 
of $\rho$ inside $Y$. Here $Y$ consists of the absolute values of
all elements appearing in those classes 
$X\subset (\underline{\overline{n}}\cup\underline{n})$
for which $|X\cap\{i,\overline{i}\}|\leq 1$, for all $i\in\underline{n}$.
For such an $X$, the restrictions of the functions 
$f$ and $-f$ to $X$ determine $X$ and $\overline{X}$, up 
to swapping these two sets. The equivalence
relation $\sim$ compensates for this swapping.

For $i=1,2,\dots,n-1$, we denote by $\tilde{\xi}_{i}$ the idempotent of 
$\mathbf{IB}^*_n$ corresponding to the tuple $(\rho,\underline{n},f)$, where
$\rho$ is the equivalence relation on
$\underline{n}$ which has only one non-singleton part, namely $\{i,i+1\}$,
and $f(i)=+$, for all $i\in \underline{n}$. We denote by $\tilde{\xi}_{0}$ 
the idempotent of $\mathbf{IB}^*_n$ corresponding to the 
tuple $(\rho,\underline{n}\setminus\{1\},f)$, where
$\rho$ is the equality relation and $f(i)=+$, for all 
$i\in \underline{n}\setminus\{1\}$.

\subsubsection{Map from a type $B$ singular braid monoid}\label{s4.3.5}

Let $W$ be the Coxeter group corresponding to the following (type $B$) Coxeter graph:
\begin{displaymath}
\xymatrix{s_0\ar@{-}[r]^4&s_1\ar@{-}[r]&s_2\ar@{-}[r]&\dots\ar@{-}[r]&s_{n-1}} 
\end{displaymath}

The obvious analogue of the map $\lambda$ from Proposition~\ref{prop-4.3.2-1}
gives rise to a natural epimorphism from $\mathbf{SB}(W)$, where $W$ is of type $B_n$
onto $\mathbf{FB}^*_n$.

\begin{proposition}\label{prop-4.3.5-1}
There is a unique homomorphism $\boldsymbol{\lambda}:\mathbf{SB}(W)\to\mathbf{FB}^*_n$, such that
\begin{displaymath}
\boldsymbol{\lambda}(\sigma_{s_i})=\tilde{s}_i\text{ and }
\boldsymbol{\lambda}(\tau_{s_i})=\tilde{\xi}_{i},\text{ for }i=0,1,\dots,n-1.
\end{displaymath}
This homomorphism is surjective.
\end{proposition}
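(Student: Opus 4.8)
The plan is to construct $\boldsymbol{\lambda}$ as a composition, in exact parallel with the factorization $\lambda=\boldsymbol{\pi}\circ\overline{\eta}$ used after Proposition~\ref{prop-4.3.2-1} in type $A$. First I would observe that the homomorphism $\boldsymbol{\eta}:\mathbf{SB}(W)\to\mathbf{Bin}^B_n$ of Proposition~\ref{prop-4.7.4-1} actually takes values in $\mathbf{Bin}^{B,\mathrm{ess}}_n$: each generator is essential (the $\tilde{s}_i$ are permutation matrices, while the $\tilde{\mathbf{s}}_i$ contain the diagonal $\Delta$), and $\mathbf{Bin}^{B,\mathrm{ess}}_n=\mathbf{Bin}^B_n\cap\mathbf{Bin}^{(2),\mathrm{ess}}_n$ is a submonoid. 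Restricting the codomain yields $\overline{\boldsymbol{\eta}}:\mathbf{SB}(W)\to\mathbf{Bin}^{B,\mathrm{ess}}_n$. Composing with the quotient map $\boldsymbol{\pi}^B:\mathbf{Bin}^{B,\mathrm{ess}}_n\to\mathbf{IB}^*_n$ coming from the restricted congruence of Subsection~\ref{s4.3.4}, I set $\boldsymbol{\lambda}:=\boldsymbol{\pi}^B\circ\overline{\boldsymbol{\eta}}$.

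Next I would verify the values on generators. A permutation matrix has a graph that is a perfect matching, so each of its connected components is already complete bipartite; hence $\boldsymbol{\pi}^B$ fixes $\tilde{s}_i$ and $\boldsymbol{\lambda}(\sigma_{s_i})=\tilde{s}_i$. For $i\geq1$ the connected components of $\Gamma_{\tilde{\mathbf{s}}_i}$ merge exactly $\{i,i+1\}$ and $\{\overline{i},\overline{i+1}\}$ while fixing every remaining diagonal pair, so $\boldsymbol{\pi}^B(\tilde{\mathbf{s}}_i)$ is precisely the idempotent $\tilde{\xi}_i$; similarly $\boldsymbol{\pi}^B(\tilde{\mathbf{s}}_0)=\tilde{\xi}_0$, whence $\boldsymbol{\lambda}(\tau_{s_i})=\tilde{\xi}_i$. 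The image of $\boldsymbol{\lambda}$ is generated by the units $\tilde{s}_i$ and the idempotents $\tilde{\xi}_i$, so it lies inside the maximal factorizable submonoid $\mathbf{FB}^*_n$, and the codomain restriction is legitimate. Uniqueness is immediate since $\sigma_{s_i},\tau_{s_i}$ generate $\mathbf{SB}(W)$. One can instead check relations~\eqref{sbm-eq1}--\eqref{sbm-eq5} directly for $\tilde{s}_i,\tilde{\xi}_i$: \eqref{sbm-eq1} is the Coxeter presentation of $\{\pm1\}\wr S_n$; \eqref{sbm-eq5} and \eqref{sbm-eq4} hold because a transposition lying inside a merged block fixes that block and idempotents of disjoint support commute; and the mixed relations \eqref{sbm-eq2}, \eqref{sbm-eq3} say that conjugating a generating idempotent by the relevant braid word produces the neighbouring one — for \eqref{sbm-eq3}, for instance, $\tilde{s}_1\tilde{s}_0\tilde{s}_1$ is the sign change at the second coordinate, whose support $\{2,\overline{2}\}$ is disjoint from that of $\tilde{\xi}_0$.

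For surjectivity I would exploit factorizability. Every element of $\mathbf{FB}^*_n$ is a product $\sigma\xi$ of a unit $\sigma$ and an idempotent $\xi$; the units form $\{\pm1\}\wr S_n$, generated by $\tilde{s}_0,\dots,\tilde{s}_{n-1}$, so it remains to generate the idempotents. Now $E(\mathbf{FB}^*_n)=E(\mathbf{IB}^*_n)$ is the semilattice, under join, of the $i\mapsto\overline{i}$-invariant equivalence relations on $\underline{\overline{n}}\cup\underline{n}$, and join coincides with the product in $E(\mathbf{IB}^*_n)$. Every such relation is the join of its atoms, and an atom has exactly one of two shapes: it either merges a single self-paired class $\{i,\overline{i}\}$, or it merges a pair $\{a,b\}$ with $b\neq\overline{a}$ together with its forced mirror $\{\overline{a},\overline{b}\}$. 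The signed permutation group acts transitively on the atoms of each shape, so every atom is a conjugate of $\tilde{\xi}_0$ or of $\tilde{\xi}_1$, and such conjugates are words in the $\tilde{s}_i$ and $\tilde{\xi}_0,\tilde{\xi}_1$. Hence every idempotent, and then every $\sigma\xi$, lies in the submonoid generated by the $\tilde{s}_i$ and $\tilde{\xi}_i$, so $\boldsymbol{\lambda}$ is onto.

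The main obstacle is the generation of the idempotent semilattice inside the surjectivity argument. Unlike type $A$, where all generating idempotents are conjugate and a single $\xi_i$ suffices alongside $S_n$, type $B$ has two distinct conjugacy classes of atomic idempotents — the self-paired merges and the mirror-pair merges — so one genuinely needs both $\tilde{\xi}_0$ and $\tilde{\xi}_1$; establishing that these two families exhaust all atoms up to conjugacy is the crux. The remaining steps (the relation verification and uniqueness) are routine.
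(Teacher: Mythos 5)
Your proposal is correct and follows essentially the same route as the paper: the paper's proof is a direct verification of the relations on generators, with your composition $\boldsymbol{\pi}^B\circ\overline{\boldsymbol{\eta}}$ being exactly the factorization the paper records immediately afterwards in Remark~\ref{rem13}. The only substantive difference is that you spell out the generation of $E(\mathbf{FB}^*_n)$ by the conjugates of $\tilde{\xi}_0$ and $\tilde{\xi}_1$ (via the two conjugacy classes of atoms of the semilattice), a point the paper's proof merely asserts when it claims that the images of the generators generate $\mathbf{FB}^*_n$; this added detail is correct and welcome.
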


\begin{proof}
Existence follows from the straightforward verification of the analogues  of the relations 
\eqref{sbm-eq1}, \eqref{sbm-eq2}, \eqref{sbm-eq3}, \eqref{sbm-eq4} and \eqref{sbm-eq5} for the 
elements $\tilde{s}_i$  and $\tilde{\xi}_{i}$.  Uniqueness is due to the fact that the homomorphism
is defined on the generators of $\mathbf{SB}(W)$. Surjectivity follows from the fact that
the images of the generators of $\mathbf{SB}(W)$ generate $\mathbf{FB}^*_n$.
\end{proof}

\begin{remark}\label{rem13}
{\em 
We note that the homomorphism $\boldsymbol{\lambda}$ is the composition of 
$\boldsymbol{\eta}$ followed by the natural projection from 
$\mathbf{Bin}^{B,\mathrm{ess}}_n$ onto $\mathbf{FB}^*_n$.
}
\end{remark}

\subsubsection{Presentation for $\mathbf{FB}^*_n$}\label{s4.3.7}

\begin{proposition}\label{prop-4.3.7-1}
The monoid $\mathbf{FB}^*_n$ is generated by the elements
$\tilde{s}_i$ and $\tilde{\xi}_{i}$, where we have  $i=0,1,\dots,n-1$, subject to
\begin{itemize}
\item the (analogues of the) relations \eqref{sbm-eq1}, 
\eqref{sbm-eq2}, \eqref{sbm-eq3}, \eqref{sbm-eq4} and \eqref{sbm-eq5}; 
\item the (analogues of the) relations \eqref{prop-4.3.3-1.1}, 
\eqref{prop-4.3.3-1.2}, \eqref{prop-4.3.3-1.3} and
\eqref{prop-4.3.3-1.4}, for all $i$ including $0$;
\item the additional relations 
\begin{eqnarray}
\tilde{s}_1\tilde{\xi}_0\tilde{s}_1\tilde{\xi}_0&=&
\tilde{\xi}_0\tilde{s}_1\tilde{\xi}_0\tilde{s}_1,\label{prop-4.3.7-1-rel1}\\ 
\tilde{s}_0\tilde{\xi}_1\tilde{s}_0\tilde{\xi}_1 &=&
\tilde{\xi}_1\tilde{s}_0\tilde{\xi}_1\tilde{s}_0,\label{prop-4.3.7-1-rel2}\\ 
\tilde{\xi}_0\tilde{\xi}_1 &=&
\tilde{s}_0\tilde{\xi}_1\tilde{s}_0\tilde{\xi}_1.\label{prop-4.3.7-1-rel3}
\end{eqnarray}
\end{itemize}
\end{proposition}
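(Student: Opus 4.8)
The plan is to show that the canonical surjection from the abstractly presented monoid onto $\mathbf{FB}^*_n$ is an isomorphism, by a normal-form and counting argument. Let $\hat M$ denote the monoid defined by the presentation in the statement. By Proposition~\ref{prop-4.3.5-1} there is a surjective homomorphism $\boldsymbol\lambda\colon\mathbf{SB}(W)\to\mathbf{FB}^*_n$ with $\sigma_{s_i}\mapsto\tilde s_i$ and $\tau_{s_i}\mapsto\tilde\xi_i$; since the defining relations \eqref{sbm-eq1}--\eqref{sbm-eq5} of $\mathbf{SB}(W)$ are among the relations of $\hat M$, there is a surjection $\mathbf{SB}(W)\tto\hat M$, and $\boldsymbol\lambda$ descends to a surjection $\phi\colon\hat M\tto\mathbf{FB}^*_n$ as soon as the extra relations \eqref{prop-4.3.3-1.1}--\eqref{prop-4.3.3-1.4} (for all $i$ including $0$) and \eqref{prop-4.3.7-1-rel1}--\eqref{prop-4.3.7-1-rel3} are seen to hold among the concrete generators $\tilde s_i,\tilde\xi_i$ of $\mathbf{FB}^*_n$. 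First I would carry out this verification; each relation is a short identity between signed permutations and the explicit idempotents $(\rho,Y,f)$ of Subsection~\ref{s4.3.4}, and is checked directly. The whole problem then reduces to the bound $|\hat M|\le|\mathbf{FB}^*_n|$.

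Next I would establish a factorization normal form in $\hat M$. Relations \eqref{sbm-eq1} and \eqref{prop-4.3.3-1.1} make each $\tilde s_i$ an involution, so the submonoid $U\subseteq\hat M$ generated by the $\tilde s_i$ is a group and a quotient of the Coxeter group $W=W(B_n)=\{\pm1\}\wr S_n$; in particular $|U|\le 2^n n!$. Since every $\tilde s_i$ is invertible, any unit can be moved to the left of any idempotent generator by conjugation, $\tilde s_j\tilde\xi_i=(\tilde s_j\tilde\xi_i\tilde s_j)\,\tilde s_j$, where $\tilde s_j\tilde\xi_i\tilde s_j$ lies in the submonoid $\mathcal E\subseteq\hat M$ generated by all $U$-conjugates of the $\tilde\xi_i$. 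Pushing all units leftward thus shows that every element of $\hat M$ can be written as $w\,\xi$ with $w\in U$ and $\xi\in\mathcal E$; the mixed relations \eqref{sbm-eq2}, \eqref{sbm-eq3} and \eqref{sbm-eq5} guarantee that these conjugations are consistent with the group structure of $U$.

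I would then identify $\mathcal E$ with the semilattice $E=E(\mathbf{IB}^*_n)$ of $\overline{(\cdot)}$-invariant equivalence relations, described by tuples $(\rho,Y,f)$ modulo $\sim$ in Subsection~\ref{s4.3.4}. Idempotency \eqref{prop-4.3.3-1.2} together with the commutativity relations \eqref{sbm-eq4} and \eqref{prop-4.3.3-1.3}, the commutation of $\tilde\xi_0$ and $\tilde\xi_1$ with their respective conjugates in \eqref{prop-4.3.7-1-rel1} and \eqref{prop-4.3.7-1-rel2}, and the sign-compensation relation \eqref{prop-4.3.7-1-rel3} should show that $\mathcal E$ is a commutative idempotent monoid whose generators and their $U$-conjugates act as the expected ``merges'' of classes, so that counting the reachable tuples modulo $\sim$ gives $|\mathcal E|\le|E|$. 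For a factorizable inverse monoid the factor $\xi$ is recovered from $x=w\xi$ as $\xi=x^{-1}x$, and $w$ is then determined up to right multiplication by the stabilizer $\mathrm{Stab}_W(\xi)=\{w\in W:w\xi=\xi\}$, whence $|\mathbf{FB}^*_n|=\sum_{\xi\in E}[W:\mathrm{Stab}_W(\xi)]$. To reach the same bound in $\hat M$ it suffices that, for each $\xi\in\mathcal E$, the relations force $w\xi=\xi$ for every $w$ in the concrete stabilizer: the stabilizer relation \eqref{prop-4.3.3-1.4} for $1\le i\le n-1$ supplies the $S_n$-part exactly as in the proof of Proposition~\ref{prop-4.3.3-1}, while \eqref{prop-4.3.3-1.4} for $i=0$ together with \eqref{sbm-eq5} yields the sign-flip $\tilde s_0\tilde\xi_0=\tilde\xi_0$, and \eqref{prop-4.3.7-1-rel3} accounts for the $\sim$-collapse of signs on classes that have become $\overline{(\cdot)}$-invariant.

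Combining these, every element of $\hat M$ falls into one of at most $\sum_{\xi\in E}[W:\mathrm{Stab}_W(\xi)]=|\mathbf{FB}^*_n|$ classes, so $|\hat M|\le|\mathbf{FB}^*_n|$; with surjectivity of $\phi$ this gives $\hat M\cong\mathbf{FB}^*_n$. The main obstacle is the last step: proving that \eqref{prop-4.3.3-1.4} together with the three type $B$ relations \eqref{prop-4.3.7-1-rel1}--\eqref{prop-4.3.7-1-rel3} really generate the \emph{full} stabilizer $\mathrm{Stab}_W(\xi)$ of every idempotent, equivalently that no further relation is needed to collapse $w\xi$ to $\xi$. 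This is precisely where the sign-compensation equivalence $\sim$ on the tuples $(\rho,Y,f)$ must be shown to be a consequence of \eqref{prop-4.3.7-1-rel3}, and it is the delicate combinatorial core of the argument. As an alternative route, one could instead verify that the present relations realize the hypotheses of the general presentation scheme for factorizable inverse monoids of \cite{EEF}, thereby deducing completeness of the presentation without an explicit count.
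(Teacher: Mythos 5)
Your overall strategy coincides with the paper's: both reduce to the cardinality bound $|\hat M|\le|\mathbf{FB}^*_n|$, both use the factorization $w\xi$ with $w$ a unit and $\xi$ in the submonoid generated by the unit-conjugates of the $\tilde\xi_i$, and both close the count via the left-multiplication stabilizers of idempotents. However, the two steps you explicitly flag as unresolved are precisely the substance of the paper's argument, so as written the proposal has a genuine gap rather than merely a deferred computation. First, you need $\phi$ to be injective on $\mathcal E$ (equivalently $|\mathcal E|\le|E|$), and it is not enough to say that the relations ``act as the expected merges'': one must show that two products of conjugates of the $\tilde\xi_i$ which map to the same idempotent of $\mathbf{FB}^*_n$ are already equal in $\hat M$. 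The paper does this by first counting the conjugacy classes of $\mathtt{x}_0$ and $\mathtt{x}_1$ (getting exactly $n$ and $n(n-1)$ conjugates, denoted $\mathtt{t}_i$, $\mathtt{t}_{i,j}$, $\overline{\mathtt{t}}_{i,j}$), proving these all commute and are idempotent, and then encoding a product $\omega$ by a graph $\Lambda_\omega$ on $\overline{\underline{n}}\cup\underline{n}$ and showing, via \eqref{prop-4.3.7-1-rel3} and the derived identity $\mathtt{t}_1\mathtt{t}_{1,2}=\mathtt{t}_2\mathtt{t}_{1,2}=\mathtt{t}_{1,2}\mathtt{t}_1=\mathtt{t}_{1,2}\mathtt{t}_2$, that $\omega$ depends only on the graph $\tilde\Lambda_\omega$ obtained by completing connected components. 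This completion step is exactly what absorbs the $\sim$-equivalence on tuples $(\rho,Y,f)$ that you mention; without it the injectivity claim does not follow.

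Second, your stabilizer step is asserted, not proved: you need that for every $\xi\in\mathcal E$ the full concrete stabilizer $\mathrm{Stab}_W(\phi(\xi))$ lifts to elements of $U$ fixing $\xi$. The paper's mechanism is to observe that each edge of $\tilde\Lambda_\omega$ yields, via \eqref{sbm-eq2}, \eqref{sbm-eq3}, \eqref{sbm-eq5} and \eqref{prop-4.3.3-1.4}, a \emph{reflection} of the signed permutation group that left-stabilizes $\omega$ in $\hat M$, and then to check that the stabilizer of $\phi(\omega)$ in $\mathbf{FB}^*_n$ is generated by exactly these reflections; this is what converts the abstract inequality on stabilizer orders into the needed direction. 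Your suggested shortcut via the general presentation scheme of \cite{EEF} is plausible in principle, but invoking it still requires verifying its hypotheses (in particular a presentation of the idempotent semilattice and of the action of units on it), which is the same combinatorial content in different packaging. In short: correct skeleton, same route as the paper, but the two load-bearing lemmas are missing and neither is routine.
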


\begin{proof}
Below we give a very detailed sketch of the idea of the proof,
leaving it for the reader to verify most of the technical details.

It is easy to check that the generators of $\mathbf{FB}^*_n$
satisfy all the relations in the formulation.
Denote by $Q$ the monoid with the presentation described in the 
formulation generated by $x_i$ (instead of $\tilde{s}_i$)
and $\mathtt{x}_i$ (instead of $\tilde{\xi}_i$). 
We have the obvious surjection $\psi:Q\tto \mathbf{FB}^*_n$
which we want to prove is an isomorphism. For this we need to prove
that $|Q|\leq |\mathbf{FB}^*_n|$.

Due to \eqref{sbm-eq1} and \eqref{prop-4.3.3-1.1}, the group $G$ of invertible
elements in $Q$ is isomorphic to the group of  signed permutations on $\underline{n}$
(i.e. the Coxeter group of type $B_n$).
Denote by $T$ the set of all $G$-conjugates of all $\mathtt{x}_i$,
where $i=0,1,2\dots,n-1$.

By \eqref{prop-4.3.3-1.4}, \eqref{sbm-eq2}, \eqref{sbm-eq3} and \eqref{sbm-eq5},
the $G$-conjugate stabilizer of ${\mathtt{x}}_0$ contains both 
$x_0$ and all signed permutations on $\{2,3,\dots,n\}$. Hence the number of
the $G$-conjugates of ${\mathtt{x}}_0$ is at most
$\frac{2^n\cdot n!}{2\cdot 2^{n-1}(n-1)!}=n$. Since the number of 
the $G$-conjugates of $\tilde{\xi}_0$ is exactly $n$,
we obtain that the number of
the $G$-conjugates of ${\mathtt{x}}_0$ is exactly $n$.
For $i=1,\dots,n$, we denote by $\mathtt{t}_i$ the conjugate of 
${\mathtt{x}}_0$ by the transposition $(1,i)$. Note that $\mathtt{t}_1={\mathtt{x}}_0$.

Similarly, the $G$-conjugate stabilizer of ${\mathtt{x}}_1$ contains both 
the elements $x_1$ and $x_0x_1x_0$, their product $x_1x_0x_1x_0$, and, furthermore, 
all signed permutations on $\{3,4,\dots,n\}$. Hence the number of
the $G$-conjugates of ${\mathtt{x}}_1$ is at most
$\frac{2^n\cdot n!}{4\cdot 2^{n-2}(n-2)!}=n(n-1)$. Since the number of 
the $G$-conjugates of ${\xi}_1$ is exactly $n(n-1)$,
we obtain that the number of
the $G$-conjugates of ${\mathtt{x}}_1$ is exactly $n(n-1)$.
For different $i,j\in\{1,\dots,n\}$, we denote by $\mathtt{t}_{i,j}$ 
the conjugate of ${\mathtt{x}}_1$ by $(1,i)(2,j)$.
We also denote by $\overline{\mathtt{t}}_{i,j}$ 
the conjugate of ${\mathtt{x}}_1$ by $(1,\overline{i})(2,{j})$.
From \eqref{prop-4.3.3-1.4}, it follows that $\mathtt{t}_{i,j}=\mathtt{t}_{j,i}$
and $\overline{\mathtt{t}}_{i,j}=\overline{\mathtt{t}}_{j,i}$.
Also, we have ${\mathtt{x}}_i=\mathtt{t}_{i,i+1}$, for all
$i>1$, using \eqref{sbm-eq2} and \eqref{sbm-eq3}.

From \eqref{prop-4.3.7-1-rel1}, \eqref{prop-4.3.7-1-rel2}, 
\eqref{sbm-eq3}, \eqref{sbm-eq4} and \eqref{prop-4.3.3-1.3},
it follows that all elements in $T$ commute with each other. 
From \eqref{prop-4.3.3-1.2}, it also follows that all these elements
are idempotent.

We note the following property
(involving \eqref{prop-4.3.7-1-rel1}):
\begin{equation}
\label{prop-4.3.7-1-rel1-55}
x_1\mathtt{t}_1x_1\mathtt{t}_{1,2}=\mathtt{t}_1\mathtt{t}_{1,2}.
\end{equation}
Indeed, using the already established commutativity, we have
\begin{displaymath}
\begin{array}{rcl}
x_1\mathtt{t}_1x_1\mathtt{t}_{1,2}&\overset{\eqref{prop-4.3.3-1.4}}{=}&
x_1\mathtt{t}_1\mathtt{t}_{1,2}\\
&=&
x_1\mathtt{t}_{1,2}\mathtt{t}_1\\
&\overset{\eqref{prop-4.3.3-1.4}}{=}&
\mathtt{t}_{1,2}\mathtt{t}_1\\
&=&\mathtt{t}_1\mathtt{t}_{1,2}.
\end{array}
\end{displaymath}
Also, note that $x_1\mathtt{t}_1x_1=\mathtt{t}_2$. This implies
the following extension of \eqref{prop-4.3.7-1-rel1-55}:
\begin{equation}\label{prop-4.3.7-1-rel1-55-2}
\mathtt{t}_1\mathtt{t}_{1,2}=\mathtt{t}_2\mathtt{t}_{1,2}=
\mathtt{t}_{1,2}\mathtt{t}_1=\mathtt{t}_{1,2}\mathtt{t}_2.
\end{equation}

Now we can classify the elements in the monoid 
generated by $T$. Let $\omega$ be a product of elements 
in $T$. Due to commutativity and idempotency for elements of 
$T$, we may assume  that each factor appearing in $\omega$ 
appears there only once. We associate to  $\omega$ an 
unoriented graph $\Lambda_\omega$ with the set of 
vertices $\overline{\underline{n}}\cup\underline{n}$
and the edges defined as follows:
\begin{itemize}
\item  For each factor $\mathtt{t}_i$ appearing in $\omega$,
we connect $i$ and $\overline{i}$ by an edge in $\Lambda_\omega$.
\item  For each factor $\mathtt{t}_{i,j}$, where $i\neq j$, 
appearing in $\omega$, we connect $i$ and $j$ by an edge in $\Lambda_\omega$
and we also connect $\overline{i}$ with $\overline{j}$ by an edge in $\Lambda_\omega$.
\item  For each factor $\overline{\mathtt{t}}_{i,j}$, where $i\neq j$, 
appearing in $\omega$, we connect $i$ and $\overline{j}$ by an edge in $\Lambda_\omega$
and we also connect $j$ with $\overline{i}$ by an edge in $\Lambda_\omega$.
\end{itemize}
Let $\tilde{\Lambda}_\omega$ be the graph obtained from 
$\Lambda_\omega$ by extending each connected component to a
complete graph on the set of vertices of this connected component. 

From the relations \eqref{prop-4.3.7-1-rel1}, \eqref{prop-4.3.7-1-rel2}, 
\eqref{prop-4.3.7-1-rel3} and \eqref{prop-4.3.7-1-rel1-55-2}, it follows
that $\omega=\omega'$ if and only if $\tilde{\Lambda}_\omega=
\tilde{\Lambda}_{\omega'}$. Here is an illustration of how this works by an  example.
In terms of the graphs, the relation \eqref{prop-4.3.7-1-rel3} says that 
the following two graphs define the same element of $Q$:
\begin{displaymath}
\xymatrix@C=3mm@R=3mm{\overline{1}\ar@{-}[r]\ar@{-}[d]&1\ar@{-}[d]\\\overline{2}&2}\qquad \text{ and }\qquad
\xymatrix@C=3mm@R=3mm{\overline{1}\ar@{-}[rd]\ar@{-}[d]&1\ar@{-}[d]\ar@{-}[ld]\\\overline{2}&2}\qquad 
\end{displaymath}
Using \eqref{prop-4.3.3-1.3} and \eqref{prop-4.3.3-1.4}, it then follows that 
the following two graphs define the same element of $Q$:
\begin{displaymath}
\xymatrix@C=3mm@R=3mm{\overline{1}\ar@{-}[d]&1\ar@{-}[d]\\\overline{2}\ar@{-}[r]&2}\qquad \text{ and }\qquad
\xymatrix@C=3mm@R=3mm{\overline{1}\ar@{-}[rd]\ar@{-}[d]&1\ar@{-}[d]\ar@{-}[ld]\\\overline{2}&2}\qquad 
\end{displaymath}
Now, using the idempotency of all factors, it follows that the element of $Q$
defined by any of the above graphs is equal to the element of $Q$
defined by the full graph on $\{1,2,\overline{1},\overline{2}\}$.
The general case follows from this local example inductively.

Combining the fact that $\omega=\omega'$ if and only if $\tilde{\Lambda}_\omega=
\tilde{\Lambda}_{\omega'}$ with the fact that full  symmetric
(under $i\mapsto\overline{i}$) graphs on $\overline{\underline{n}}\cup\underline{n}$
whose connected components are complete graphs classify all idempotents in 
$\mathbf{FB}^*_n$, we  obtain that the restriction of $\psi$ to the submonoid
$\langle T\rangle$ generated by $T$ in $Q$ is injective.

Finally, let $\omega$ be a product of elements in 
$T$ with the associated graph $\tilde{\Lambda}_\omega$.
Each edge of $\tilde{\Lambda}_\omega$ corresponds,
using \eqref{sbm-eq2}, \eqref{sbm-eq3}, \eqref{sbm-eq5}
and \eqref{prop-4.3.3-1.4}, to a reflection in $G$
which stabilizes $\omega$ under the left multiplication.
It is easy to check that the $\psi(G)$-stabilizer of 
$\psi(\omega)$ with respect to to the left multiplication
is generated by such reflections.
This implies that the cardinality of the stabilizer 
of $\omega$ in $G$ with respect to the left multiplication
is at least as large as that of the corresponding stabilizer 
of $\psi(\omega)$ in $\psi(G)$.
Combining the facts that both $Q$ and $\mathbf{FB}^*_n$ 
are factorizable and $\psi$ is bijective on both $G$
and $\langle T\rangle$, it follows that 
$|Q|\leq |\mathbf{FB}^*_n|$, completing the proof.
\end{proof}

\subsection{Symmetric inverse semigroup (a.k.a. the rook monoid)}\label{s4.2}

\subsubsection{Symmetric inverse semigroup}\label{s4.2.1}

For $n\in\mathbb{Z}_{\geq 1}$, consider the corresponding symmetric inverse semigroup 
$\mathbf{IS}_n$ on $\underline{n}$. The elements of $\mathbf{IS}_n$ are all bijections
between subsets of $\underline{n}$. The semigroup operation is given 
by the usual composition of partial maps,
see \cite[Section~2.1]{GM} for details. The symmetric group $S_n$ is the group of invertible
elements in $\mathbf{IS}_n$. The semigroup $\mathbf{IS}_n$ is also known as the rook monoid.

Let $\alpha\in\mathbf{IS}_n$ be a bijection from $X\subset \underline{n}$ to $Y\subset \underline{n}$.
The cardinality of $X$ is called the rank of $\alpha$, denoted $\mathrm{rank}(\alpha)$. The subset
$\widetilde{\mathbf{IS}}_n$ of $\mathbf{IS}_n$ consisting of all elements whose rank is 
different from $n-1$ is a subsemigroup (containing $S_n$). 

For a subset $X\subset\underline{n}$, the semigroup $\mathbf{IS}_n$ contains the 
identity map $\mathrm{Id}_X$ on $X$. We will use the notation $\varepsilon_X=\mathrm{Id}_{\underline{n}\setminus X}$.  
The element $\varepsilon_X$ is an idempotent and each idempotent of $\mathbf{IS}_n$ is of this form. 

The following lemma describes a close relation between $\mathbf{F}^*_n$ and $\mathbf{IS}_n$.

\begin{lemma}\label{lem-isn-dual}
Restricting an element of $\mathbf{F}^*_n$ to singletons defines a homomorphism
from $\mathbf{F}^*_n$ to $\mathbf{IS}_n$ which we denote by ${\upsilon}$.
The image of ${\upsilon}$ coincides with $\widetilde{\mathbf{IS}}_n$.
\end{lemma}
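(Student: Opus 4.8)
The plan is to define $\upsilon$ intrinsically, verify the homomorphism property through the factorizable structure of $\mathbf{F}^*_n$, and then pin down the image by a short combinatorial dichotomy about singleton blocks of set partitions.

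First I would describe $\upsilon$ concretely. Writing an element $\xi\in\mathbf{F}^*_n$ as the bijection sending $A_i$ to $B_{\sigma(i)}$ with $|A_i|=|B_{\sigma(i)}|$, a block $A_i=\{a\}$ is a singleton if and only if its image $B_{\sigma(i)}=\{b\}$ is a singleton; I set $\upsilon(\xi)(a)=b$ on the set $X$ of all such $a$. This is visibly a partial bijection of $\underline n$, so $\upsilon(\xi)\in\mathbf{IS}_n$, and since the definition refers to no factorization there is nothing to check for well-definedness. Using a factorization $\xi=\sigma\xi_\rho$ with $\sigma\in S_n$ and $\xi_\rho\in E(\mathbf{I}^*_n)$, this reads: $X$ is the set of $\rho$-singletons and $\upsilon(\sigma\xi_\rho)=\sigma|_X$. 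Although $\sigma$ is not determined by $\sigma\xi_\rho$, its restriction to the singleton set $X$ is, so this description is consistent.

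The homomorphism property is the heart of the matter. I would compute both sides using the product formula $(\sigma'\xi_{\rho'})(\sigma\xi_\rho)=(\sigma'\sigma)\big(\xi_{\sigma^{-1}\rho'}\,\xi_\rho\big)=(\sigma'\sigma)\,\xi_{\rho\vee\sigma^{-1}\rho'}$, where $\sigma^{-1}\rho'$ is the partition with blocks $\sigma^{-1}(B)$, $B\in\rho'$, and $\vee$ denotes the join of set partitions (their product in $E(\mathbf{I}^*_n)$). The single combinatorial fact needed is: \emph{$a$ is a singleton of $\rho\vee\tau$ if and only if $a$ is simultaneously a singleton of $\rho$ and of $\tau$}. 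Indeed the join is coarser than each factor, so a non-singleton stays non-singleton, while an element isolated in both $\rho$ and $\tau$ has no non-trivial neighbour to be joined to. Applying this with $\tau=\sigma^{-1}\rho'$, and noting $a$ is a $\sigma^{-1}\rho'$-singleton iff $\sigma(a)$ is a $\rho'$-singleton, identifies the domain of $\upsilon\big((\sigma'\xi_{\rho'})(\sigma\xi_\rho)\big)$ with $\{a:\ a\text{ is a }\rho\text{-singleton and }\sigma(a)\text{ is a }\rho'\text{-singleton}\}$, which is exactly the domain of the composite $\upsilon(\sigma'\xi_{\rho'})\,\upsilon(\sigma\xi_\rho)$ in $\mathbf{IS}_n$; on this set both maps equal $a\mapsto\sigma'\sigma(a)$. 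The only care needed is to fix one composition convention (leftmost factor applied last, say) consistently in $\mathbf{F}^*_n$ and $\mathbf{IS}_n$, after which the permutation parts agree because they multiply as $\sigma'\sigma$ on both sides.

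Finally I would determine the image. The inclusion $\operatorname{im}\upsilon\subseteq\widetilde{\mathbf{IS}}_n$ rests on the elementary observation that a set partition of $\underline n$ can never have exactly $n-1$ singleton blocks: if $n-1$ elements are singletons, the block of the remaining element $x$ cannot contain any of them, hence equals $\{x\}$, forcing all $n$ elements to be singletons. Thus the domain $X$ of $\upsilon(\sigma\xi_\rho)$, being the singleton set of $\rho$, has cardinality $\neq n-1$, i.e. $\mathrm{rank}(\upsilon(\sigma\xi_\rho))\neq n-1$. For the reverse inclusion, given $\alpha\in\widetilde{\mathbf{IS}}_n$, a partial bijection $X\to Y$ with $|X|=|Y|=r\neq n-1$, I partition the complement $\underline n\setminus X$ (of size $n-r\neq 1$) into blocks each of size $\geq 2$—possible precisely because $n-r\neq 1$—and adjoin the singletons of $X$ to obtain a partition $\rho$ whose singleton set is exactly $X$. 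Choosing any $\sigma\in S_n$ extending $\alpha$ (which exists since $|X|=|Y|$) gives $\sigma\xi_\rho\in\mathbf{F}^*_n$, with all block sizes preserved, satisfying $\upsilon(\sigma\xi_\rho)=\sigma|_X=\alpha$. The main obstacle is the bookkeeping in the homomorphism step—correctly tracking how the join/essentialization in the product interacts with restriction to singletons—whereas the image computation reduces to the clean "exactly $n-1$ singletons is impossible" dichotomy.
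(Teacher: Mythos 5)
Your proof is correct, but it takes a genuinely different route from the paper's. For the homomorphism property, the paper argues directly at the level of bijections between quotients: it runs a case analysis on whether a block and its image are singletons, using only the factorizability condition $|\alpha(Y)|=|Y|$ to see that non-singleton blocks go to non-singleton blocks, and concludes that singletons compose correctly. You instead pass to the normal form $\sigma\xi_\rho$, invoke the product formula together with the identification of the idempotent product with the join of set partitions, and isolate the key combinatorial fact that $a$ is a singleton of $\rho\vee\tau$ if and only if it is a singleton of both $\rho$ and $\tau$. The two arguments encode the same phenomenon; yours is more systematic and makes the bookkeeping (including the well-definedness of $\sigma|_X$ despite the non-uniqueness of $\sigma$) explicit, while the paper's is shorter. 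For the image, the paper simply notes that the image contains $S_n$ and $\varepsilon_{\{1,2\}}$, which generate $\widetilde{\mathbf{IS}}_n$, leaving the inclusion of the image \emph{into} $\widetilde{\mathbf{IS}}_n$ implicit; you prove both inclusions directly, the containment via the observation that a set partition of $\underline{n}$ cannot have exactly $n-1$ singleton blocks, and the surjectivity onto $\widetilde{\mathbf{IS}}_n$ by explicitly constructing a preimage $\sigma\xi_\rho$ with prescribed singleton set. Your version is more self-contained, whereas the paper's generator argument is quicker but relies on knowing a generating set for $\widetilde{\mathbf{IS}}_n$.
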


\begin{proof}
Let $\alpha$ and $\beta$ be two elements of $\mathbf{F}^*_n$. If 
$\beta$ sends some singleton $x$ to a singleton $y$ and 
$\alpha$ sends $y$ to a singleton $z$, then $\alpha\beta$ sends $x$ to $z$.

If $\beta$ sends some singleton $x$ to a singleton  $y$,
however $y$ is not a singleton for $\alpha$ but is, rather, 
contained in a non-singleton class $Y$, then 
$\alpha$ sends $Y$ to a non-singleton $Z$ (as $|\alpha(Y)|=|Y|$)
and therefore $\alpha\beta$ sends some non-singleton class containing $x$ 
to a non-singleton class containing $Y$. Similar arguments, applied to 
other possibilities of classes for $\alpha$ and $\beta$, show that the
map in the formulation is a homomorphism of monoids.

The image of this map, clearly, contains $S_n$ and $\varepsilon_{\{1,2\}}$
and these generate $\widetilde{\mathbf{IS}}_n$. Therefore
the image of our homomorphism coincides with $\widetilde{\mathbf{IS}}_n$.
\end{proof}

\subsubsection{Map from singular braid monoid}\label{s4.2.2}

\begin{proposition}\label{prop-4.2.2-1}
There is a unique homomorphism $\varphi:\mathbf{SB}(S_n)\to\mathbf{IS}_n$ such that
\begin{displaymath}
\varphi(\sigma_{s_i})=s_i\text{ and }
\varphi(\tau_{s_i})=\varepsilon_{\{i,i+1\}},\text{ for }i=1,2,\dots,n-1.
\end{displaymath}
The image of $\varphi$ coincides with $\widetilde{\mathbf{IS}}_n$.
\end{proposition}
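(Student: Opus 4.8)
The plan is to avoid verifying the defining relations of $\mathbf{SB}(S_n)$ by hand and instead realize $\varphi$ as a composition of two homomorphisms that are already available. Concretely, I would set
\[
\varphi := \upsilon\circ\lambda,
\]
where $\lambda:\mathbf{SB}(S_n)\to\mathbf{F}^*_n$ is the surjective homomorphism from Proposition~\ref{prop-4.3.2-1} and $\upsilon:\mathbf{F}^*_n\to\mathbf{IS}_n$ is the restriction-to-singletons homomorphism from Lemma~\ref{lem-isn-dual}. As a composition of homomorphisms, $\varphi$ is automatically a homomorphism, so the existence part requires no relation-checking at all; the work has been front-loaded into the two cited results.

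The only genuine computation is to confirm that this composition sends the generators to the prescribed targets. On the Coxeter generators, $\lambda(\sigma_{s_i})=s_i\in S_n\subset\mathbf{F}^*_n$, and restricting the permutation $(i,i+1)$ to singletons returns the permutation $s_i\in S_n\subset\mathbf{IS}_n$, so $\varphi(\sigma_{s_i})=s_i$ as required. On the singular generators, $\lambda(\tau_{s_i})=\xi_{i}=\xi_{\{i,i+1\}}$, the idempotent of $\mathbf{F}^*_n$ whose only non-singleton block is $\{i,i+1\}$; restricting to singletons discards that block and fixes every other point, which yields exactly the partial identity $\varepsilon_{\{i,i+1\}}=\mathrm{Id}_{\underline{n}\setminus\{i,i+1\}}$. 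Hence $\varphi(\tau_{s_i})=\varepsilon_{\{i,i+1\}}$, matching the statement.

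Uniqueness is then immediate, since $\sigma_{s_i}$ and $\tau_{s_i}$ generate $\mathbf{SB}(S_n)$ and any monoid homomorphism out of it is determined by its values on these generators. For the image, I would combine the surjectivity of $\lambda$ (Proposition~\ref{prop-4.3.2-1}) with the identification $\operatorname{Im}(\upsilon)=\widetilde{\mathbf{IS}}_n$ (Lemma~\ref{lem-isn-dual}), giving
\[
\operatorname{Im}(\varphi)=\upsilon\big(\lambda(\mathbf{SB}(S_n))\big)=\upsilon(\mathbf{F}^*_n)=\widetilde{\mathbf{IS}}_n.
\]

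I do not expect a serious obstacle here: the essential content lives in Proposition~\ref{prop-4.3.2-1} and Lemma~\ref{lem-isn-dual}, and the proof reduces to the bookkeeping above. The only point requiring care is the identification $\upsilon(\xi_{i})=\varepsilon_{\{i,i+1\}}$, where one must apply the convention $\varepsilon_X=\mathrm{Id}_{\underline{n}\setminus X}$ with the correct complement. Should a reader prefer an argument that bypasses the factorizable monoid, the fallback is the direct route used in Propositions~\ref{prop-4.7.2-1} and~\ref{prop-4.3.2-1}: verify that $s_i$ and $\varepsilon_{\{i,i+1\}}$ satisfy the analogues of \eqref{sbm-eq1}--\eqref{sbm-eq5} in $\mathbf{IS}_n$, which is routine since the $\varepsilon_{\{i,i+1\}}$ are commuting idempotents with $\varepsilon_{\{i,i+1\}}s_i=\varepsilon_{\{i,i+1\}}=s_i\varepsilon_{\{i,i+1\}}$.
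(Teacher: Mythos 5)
Your proposal is correct and is exactly the paper's argument: the paper proves this proposition by observing $\varphi=\upsilon\circ\lambda$ and invoking Proposition~\ref{prop-4.3.2-1} and Lemma~\ref{lem-isn-dual}. Your version merely spells out the generator computation (in particular $\upsilon(\xi_i)=\varepsilon_{\{i,i+1\}}$) that the paper leaves implicit.
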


\begin{proof}
We note that $\varphi={\upsilon}\circ \lambda$ and hence
the claim follows from Proposition~\ref{prop-4.3.2-1} and Lemma~\ref{lem-isn-dual}.
\end{proof}

The statement of Proposition~\ref{prop-4.2.2-1} admits a slight generalization when working with algebras.
For example, for a fixed commutative ring $\Bbbk$ and $a\in\Bbbk$, 
there is a unique homomorphism $\varphi_a:\mathbf{SB}(S_n)\to\Bbbk[\mathbf{IS}_n]$ such that
\begin{displaymath}
\varphi_a(\sigma_{s_i})=s_i\text{ and }
\varphi_a(\tau_{s_i})=a\varepsilon_{\{i,i+1\}},\text{ for }i=1,2,\dots,n-1.
\end{displaymath}

\subsubsection{Presentation in type $A$}\label{s4.2.3}

We have the following presentation for the monoid $\widetilde{\mathbf{IS}}_n$.

\begin{proposition}\label{prop-4.2.3-1}
The monoid $\widetilde{\mathbf{IS}}_n$ is generated by the elements 
$s_i$ and $\varepsilon_{\{i,i+1\}}$, where $i=1,2,\dots,n-1$, subject to the relations
\eqref{sbm-eq1}, \eqref{sbm-eq2}, \eqref{sbm-eq3}, \eqref{sbm-eq4} and \eqref{sbm-eq5}
(for $s_i$  instead of $\sigma_{s_i}$ and $\varepsilon_{\{i,i+1\}}$ instead  of $\tau_{s_i}$),
the relations \eqref{prop-4.3.3-1.1}, \eqref{prop-4.3.3-1.2},
\eqref{prop-4.3.3-1.3} and \eqref{prop-4.3.3-1.4}
(for $\varepsilon_{\{i,i+1\}}$ instead  of $\xi_{\{i,i+1\}}$)
and,  additionally, the relation
\begin{equation}\label{eq-newsingrelation}
\varepsilon_{\{1,2\}}\varepsilon_{\{3,4\}}=\varepsilon_{\{1,2\}}\varepsilon_{\{2,3\}}\varepsilon_{\{3,4\}}.
\end{equation}
\end{proposition}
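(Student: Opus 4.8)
The plan is to bootstrap the desired presentation from the one for $\mathbf{F}^*_n$ in Proposition~\ref{prop-4.3.3-1}, using the surjection $\upsilon$ of Lemma~\ref{lem-isn-dual}. First I would check that the generators $s_i$ and $\varepsilon_{\{i,i+1\}}=\upsilon(\xi_{i})$ of $\widetilde{\mathbf{IS}}_n$ satisfy all the listed relations: those coming from \eqref{sbm-eq1}--\eqref{sbm-eq5} and \eqref{prop-4.3.3-1.1}--\eqref{prop-4.3.3-1.4} hold automatically, being images under the homomorphism $\upsilon$ of the defining relations of $\mathbf{F}^*_n$, while \eqref{eq-newsingrelation} is the direct observation that both sides equal the identity map on $\underline{n}\setminus\{1,2,3,4\}$ in $\mathbf{IS}_n$. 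Let $\widetilde{Q}$ be the abstract monoid presented as in the statement. Since the relations of Proposition~\ref{prop-4.3.3-1} form a subset of ours, $\widetilde{Q}$ is a quotient of $\mathbf{F}^*_n$, and the verification of \eqref{eq-newsingrelation} shows that $\upsilon$ factors as $\mathbf{F}^*_n\tto\widetilde{Q}\tto\widetilde{\mathbf{IS}}_n$. As the composite is onto by Lemma~\ref{lem-isn-dual}, it remains to prove $|\widetilde{Q}|\le|\widetilde{\mathbf{IS}}_n|$, after which the right map is forced to be an isomorphism.

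The heart of the matter is the behaviour of idempotents. I would carry out all computations with the idempotents $\xi_\rho$ of $\mathbf{F}^*_n$, remembering that \eqref{eq-newsingrelation} is additionally imposed in $\widetilde{Q}$. In $\mathbf{F}^*_n$ the $\xi_\rho$ form the lattice of set partitions under join (product), generated by the pairwise idempotents $\xi_{\{i,j\}}$. Writing $\xi_X$ for the idempotent whose only non-singleton block is $X$, I claim that \eqref{eq-newsingrelation} and its $S_n$-conjugates collapse this lattice onto the lattice of subsets under union, in the sense that $\xi_A\xi_B=\xi_{A\cup B}$ whenever $|A|,|B|\ge 2$. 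For disjoint $A,B$ one chooses distinct $a,a'\in A$ and $b,b'\in B$, uses $\xi_A\xi_{\{a,a'\}}=\xi_A$ and $\xi_B\xi_{\{b,b'\}}=\xi_B$ to insert the factor $\xi_{\{a,a'\}}\xi_{\{b,b'\}}$, rewrites this via the relevant $S_n$-conjugate of \eqref{eq-newsingrelation} as $\xi_{\{a,a',b,b'\}}$, and then joins back inside the partition lattice. Iterating over the non-singleton blocks of an arbitrary $\rho$ yields $\xi_\rho=\xi_{X(\rho)}$ in $\widetilde{Q}$, where $X(\rho)$ is the union of the non-singleton blocks. Since $|X(\rho)|\ne 1$, the idempotents of $\widetilde{Q}$ are indexed (at most) by subsets $X\subseteq\underline{n}$ with $|X|\ne 1$, matching $E(\widetilde{\mathbf{IS}}_n)$.

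For the count I would use that $\widetilde{Q}$, as an image of the factorizable monoid $\mathbf{F}^*_n$, is itself factorizable, so every element has the form $\sigma\xi_X$ with $\sigma\in S_n$ and $|X|\ne 1$. Combining \eqref{prop-4.3.3-1.4} with \eqref{sbm-eq5} gives $s_i\xi_i=\xi_i=\xi_is_i$, and absorbing each adjacent transposition through a factor $\xi_{\{a,a'\}}$ with $\{a,a'\}\subseteq X$ (using $\xi_X\xi_{\{a,a'\}}=\xi_X$) shows that $\xi_X$ is fixed under left and right multiplication by all of $S_X$. Hence $\sigma\xi_X=\sigma\tau\xi_X$ for every $\tau\in S_X$, so $\sigma\mapsto\sigma\xi_X$ is constant on right cosets of $S_X$, giving at most $n!/|X|!$ elements of the form $\sigma\xi_X$. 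Summing over admissible $X$ yields $|\widetilde{Q}|\le\sum_{|X|\ne 1}n!/|X|!=|\widetilde{\mathbf{IS}}_n|$, which is exactly the cardinality of the rook monoid with no rank-$(n-1)$ elements, completing the argument.

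The main obstacle I anticipate is making the idempotent collapse of the second paragraph fully rigorous: one must verify that \eqref{eq-newsingrelation}, together with $S_n$-conjugation and the partition-lattice relations, identifies \emph{all} partitions sharing a non-singleton support, and — equally importantly — that it does not over-identify. The latter is guaranteed only a posteriori by the counting bound, so the combinatorial rewriting and the cardinality estimate must be kept in step for the two inequalities to pinch $\widetilde{Q}$ onto $\widetilde{\mathbf{IS}}_n$.
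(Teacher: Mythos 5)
Your proposal is correct and follows essentially the same route as the paper: realize the presented monoid as the quotient $Q$ of $\mathbf{F}^*_n$ by the congruence generated by \eqref{eq-newsingrelation}, use the $S_n$-conjugates of that relation to collapse the idempotent lattice onto partitions determined by their non-singleton support, and then bound $|Q|$ by counting cosets modulo the $S_X$-stabilizers so that surjectivity of $\upsilon$ pinches $Q$ onto $\widetilde{\mathbf{IS}}_n$. Your write-up supplies more detail than the paper at the merging step (the insertion of $\xi_{\{a,a'\}}\xi_{\{b,b'\}}$ and the rewrite via the conjugated relation), but the argument is the same.
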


\begin{proof}
It is clear that $s_i$ and $\varepsilon_{\{i,i+1\}}$  satisfy all these relations, so we only need to  check
that the monoid defined by this presentation is exactly $\widetilde{\mathbf{IS}}_n$.
Since $\varphi={\upsilon}\circ \lambda$, from
Proposition~\ref{prop-4.3.3-1} we only need to prove that the kernel of 
$\boldsymbol{\upsilon}$ is generated by the relation given by \eqref{eq-newsingrelation}.
Denote by $Q$ the quotient of $\mathbf{F}^*_n$ modulo the congruence generated by \eqref{eq-newsingrelation}.
Then we have  the quotient map $Q\to \widetilde{\mathbf{IS}}_n$ which is surjective by
Lemma~\ref{lem-isn-dual}.
We need to prove that $|Q|\leq  |\widetilde{\mathbf{IS}}_n|$.

If $\rho$ is an equivalence relation  on $\underline{n}$, then the 
corresponding idempotent  $\xi_\rho\in\mathbf{I}^*_n$ 
(see Subsection~\ref{s4.3.1}) is the product of
all $\xi_{\{i,j\}}$, where $i$ and $j$ belong to the same class of $\rho$. Using the
$S_n$-conjugates of the \eqref{eq-newsingrelation}, allows us to merge non-singleton classes
of equivalence relations.  In particular, we see that the images of two equivalence relations
$\rho$ and $\rho'$ in $Q$ coincide if $\rho$ and $\rho'$ have the same singletons.
Comparing the outcomes with $E(\widetilde{\mathbf{IS}}_n)$, we get that
the image of $E(\mathbf{F}^*_n)$ in $Q$ is in a natural bijection with
$E(\widetilde{\mathbf{IS}}_n)$. 

Each element of $Q$ can be written as $\sigma\xi$, where $\sigma\in S_n$
and $\xi$ is in the image of $E(\mathbf{F}^*_n)$. From the 
relation \eqref{prop-4.3.3-1.4} it follows that,
for fixed $\xi$ with $k$ singletons, the number of different elements
of the form $\sigma\xi$ is at most $\frac{n!}{(n-k)!}$. Adding this up  over
all  $\xi$, implies $|Q|\leq  |\widetilde{\mathbf{IS}}_n|$. This completes the proof.
\end{proof}

\subsubsection{Signed symmetric inverse semigroup}\label{s4.2.4}

For a positive integer $n$, consider the symmetric inverse semigroup 
$\mathbf{IS}_X$, where $X=\{-n,-n+1,\dots,-1,1,\dots,n-1,n\}$. There is the obvious 
automorphism $\overline{\omega}$ of $\mathbf{IS}_X$ induced by the 
endomorphism $\omega$ of $X$ which changes the sign. The set of all elements 
in $\mathbf{IS}_X$ that are invariant under $\overline{\omega}$ is a subsemigroup
denoted $\mathbf{SIS}_n$, the {\em  signed symmetric inverse semigroup}.

The semigroup $\mathbf{IS}_n$ can be realized as the monoid of all $n\times n$
matrices which have the property that each row and each column of the matrix
contains at most one non-zero element, and this non-zero element equals $1$.
Such a matrix can be thought of as the indicator matrix for a placement of 
rooks which do not attack each other on an $n\times n$ square board. 
This is the justification for the alias {\em rook monoid}. 

Similarly, the semigroup $\mathbf{SIS}_n$ can be realized as the monoid of all $n\times n$
matrices which have the property that each row and each column of the matrix
contains at most one non-zero element, and this non-zero element equals $\pm 1$.
In this realization, $\mathbf{SIS}_n$ is usually called the {\em signed rook monoid}
or a {\em generalized rook monoid}, see \cite{MaSr}. 

For $i=1,2,\dots,n$, we denote by $\epsilon_i\in \mathbf{SIS}_n$ the transposition
$(i,-i)$. 

\subsubsection{$\mathbf{SIS}_n$ as a quotient of $\mathbf{FB}^*_n$}\label{s4.2.45}

We have a natural type $B$ analogue of Lemma~\ref{lem-isn-dual}.
Consider the monoid $\mathbf{FB}^*_n$ and an element 
of this monoid written in the form $\sigma\xi$, where $\sigma$ is an
invertible element and $\xi$ is an idempotent. Recall from Subsection~\ref{s4.3.4}
that $\xi$ corresponds to a certain triple $(\rho,Y,f)$, up to some equivalence.
A singleton equivalence class  of $\rho$  contained in $Y$
will be called a {\em pure singleton} for $\xi$.

Note that $\sigma\xi=(\sigma\xi\sigma^{-1})\sigma$, where $\sigma\xi\sigma^{-1}$
is an idempotent. The element $\sigma$ maps each pure singleton for $\xi$
to a pure singleton for $\sigma\xi\sigma^{-1}$. In other words, each 
$\sigma\xi=(\sigma\xi\sigma^{-1})\sigma$ induces a map  from pure singletons for $\xi$
to pure singletons for $\sigma\xi\sigma^{-1}$.

\begin{lemma}\label{lem-isnB-dual}
Restricting an element of  $\mathbf{FB}^*_n$ to pure singletons defines a homomorphism
from $\mathbf{FB}^*_n$ to $\mathbf{SIS}_n$ which we denote by $\boldsymbol{\upsilon}$.
The homomorphism $\boldsymbol{\upsilon}$ is surjective.
\end{lemma}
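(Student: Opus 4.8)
The plan is to mimic the proof of Lemma~\ref{lem-isn-dual}, carrying along the $\mathbb{Z}_2$-signs and paying attention to the new phenomenon of mixed blocks $\{i,\overline{i}\}$.

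First I would make $\boldsymbol{\upsilon}$ explicit. Write an element of $\mathbf{FB}^*_n$ as $\sigma\xi$ with $\sigma\in G:=\{\pm1\}\wr S_n$ and $\xi$ an idempotent, and recall from Subsection~\ref{s4.3.4} that $\xi$ is encoded by a triple $(\rho,Y,f)$, its pure singletons being the singleton $\rho$-classes lying in $Y$; concretely, $i$ is a pure singleton exactly when $\{i\}$ and $\{\overline{i}\}$ are two distinct singleton blocks of the associated symmetric partition of $\underline{\overline{n}}\cup\underline{n}$. Since block sizes are preserved, $\sigma$ carries the singleton block $\{i\}$ to a singleton block $\{\sigma(i)\}$ with $\sigma(i)\in\{j,\overline{j}\}$, and I set $\boldsymbol{\upsilon}(\sigma\xi)$ to be the signed partial bijection $i\mapsto\sigma(i)$ defined on the pure singletons of $\xi$ (the sign being $+$ or $-$ according to whether $\sigma(i)=j$ or $\sigma(i)=\overline{j}$). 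The flip-symmetry of elements of $\mathbf{FB}^*_n$ forces $\{\overline{i}\}\mapsto\{\overline{\sigma(i)}\}$, so this partial bijection is $\omega$-equivariant and hence lies in $\mathbf{SIS}_n$. Finally I would check independence of the factorization: if $\sigma\xi=\sigma'\xi'$ then $\xi=\xi'$ and $g:=\sigma^{-1}\sigma'$ stabilizes $\xi$ under left multiplication; as $g$ then merely permutes elements inside each block, it must fix every pure singleton together with its sign, whence $\sigma$ and $\sigma'$ agree on pure singletons and $\boldsymbol{\upsilon}$ is well defined.

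Next I would verify that $\boldsymbol{\upsilon}$ is a homomorphism, arguing exactly as for $\upsilon$ but tracking signs. Given $\zeta_1,\zeta_2\in\mathbf{FB}^*_n$ and a pure singleton $i$ of the source of $\zeta_2$, the block $\{i\}$ maps under $\zeta_2$ to a singleton block $\{k\}$. Composition of bijections between quotients proceeds by joining $\zeta_2$'s target partition with $\zeta_1$'s source partition, so there are two cases. If $k$ is also a pure singleton of $\zeta_1$, then $\{k\}$ stays a singleton in the join, $\zeta_1$ sends it to a singleton $\{l\}$, and $\zeta_1\zeta_2$ sends $\{i\}$ to $\{l\}$; the accumulated sign is the product of the two signs, matching the composite of signed partial bijections. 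If instead $k$ lies in a larger block of $\zeta_1$'s source, or in a mixed block $\{k,\overline{k}\}$ (the genuinely new type $B$ possibility), then $\{i\}$ gets absorbed into a non-singleton block of the composite and so is not a pure singleton of $\zeta_1\zeta_2$; correspondingly $k\notin\operatorname{dom}\boldsymbol{\upsilon}(\zeta_1)$, so $\boldsymbol{\upsilon}(\zeta_1)\boldsymbol{\upsilon}(\zeta_2)$ is undefined at $i$ as well. Running through the remaining symmetric cases shows $\boldsymbol{\upsilon}(\zeta_1\zeta_2)=\boldsymbol{\upsilon}(\zeta_1)\boldsymbol{\upsilon}(\zeta_2)$. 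I expect this step --- in particular making the interaction between the partition join and the mixed blocks fully rigorous while bookkeeping the signs --- to be the main obstacle.

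For surjectivity I would exhibit a generating set of $\mathbf{SIS}_n$ inside the image. The invertible elements of $\mathbf{FB}^*_n$ form $G$, on which every $i$ is a pure singleton, so $\boldsymbol{\upsilon}(\sigma)=\sigma$ and the whole group of units $G$ of $\mathbf{SIS}_n$ is hit. Moreover the idempotent $\tilde{\xi}_0$ merges $1$ with $\overline{1}$ into the mixed block $\{1,\overline{1}\}$ while leaving $2,\dots,n$ as pure singletons, so $\boldsymbol{\upsilon}(\tilde{\xi}_0)$ is the partial identity on $\underline{n}\setminus\{1\}$, an idempotent of rank $n-1$. Since $\mathbf{SIS}_n$ is generated by its group of units together with any single idempotent of rank $n-1$, the homomorphism $\boldsymbol{\upsilon}$ is surjective. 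This is precisely where type $B$ differs from type $A$: in $\mathbf{F}^*_n$ a singleton can only be destroyed in pairs, so $\upsilon$ misses rank $n-1$ and lands in $\widetilde{\mathbf{IS}}_n$, whereas the mixed block allows a single pure singleton to disappear, giving all of $\mathbf{SIS}_n$.
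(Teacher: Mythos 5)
Your proposal is correct and follows exactly the route the paper intends: the paper's own proof is the one-line remark ``Similar to the proof of Lemma~\ref{lem-isn-dual}'', and your write-up is precisely that type~$A$ argument carried out with the signs and the new mixed blocks $\{i,\overline{i}\}$ tracked explicitly, plus the standard generation of $\mathbf{SIS}_n$ by its unit group and one rank-$(n-1)$ idempotent for surjectivity. Your closing observation correctly pinpoints why the type~$B$ map is surjective onto all of $\mathbf{SIS}_n$ while the type~$A$ map only reaches $\widetilde{\mathbf{IS}}_n$.
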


\begin{proof}
Similar to the proof of Lemma~\ref{lem-isn-dual}.
\end{proof}

\subsubsection{Map from a type $B$ singular Artin monoid}\label{s4.2.5}

Let $W$ be the Coxeter group corresponding to the following (type $B$) Coxeter graph:
\begin{displaymath}
\xymatrix{s_0\ar@{-}[r]^4&s_1\ar@{-}[r]&s_2\ar@{-}[r]&\dots\ar@{-}[r]&s_{n-1}} 
\end{displaymath}
For $i=1,2,\dots,n-1$, we denote by $\tilde{\varepsilon}_{\{i,i+1\}}$
the idempotent of $\mathbf{SIS}_n$ given by the identity on the 
complement to $\{i,i+1,\overline{i},\overline{i+1}\}$.
Similarly, for $i=1,2,\dots,n$, we denote by $\tilde{\varepsilon}_{\{i\}}$
the idempotent of $\mathbf{SIS}_n$ given by the identity on the 
complement to $\{i,\overline{i}\}$.

\begin{proposition}\label{prop-4.2.5-1}
There is a unique homomorphism $\boldsymbol{\varphi}:\mathbf{SB}(W)\to\mathbf{SIS}_n$ such that
\begin{displaymath}
\begin{array}{rcll}
\boldsymbol{\varphi}(\sigma_{s_i})&=&\tilde{s}_i,&\text{ for }i=1,2,\dots,n-1;\\
\boldsymbol{\varphi}(\sigma_{s_0})&=&\epsilon_1&\\
\boldsymbol{\varphi}(\tau_{s_i})&=&
\tilde{\varepsilon}_{\{i,i+1\}},&\text{ for }i=1,2,\dots,n-1;\\ 
\boldsymbol{\varphi}(\tau_{s_0})&=&\tilde{\varepsilon}_{\{1\}}.&\\  
\end{array}
\end{displaymath}
The homomorphism $\boldsymbol{\varphi}$ is surjective.
\end{proposition}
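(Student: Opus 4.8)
The plan is to realize $\boldsymbol{\varphi}$ as a composite of two homomorphisms already available, exactly mirroring the type $A$ argument used for Proposition~\ref{prop-4.2.2-1}. Concretely, I would set $\boldsymbol{\varphi}:=\boldsymbol{\upsilon}\circ\boldsymbol{\lambda}$, where $\boldsymbol{\lambda}:\mathbf{SB}(W)\to\mathbf{FB}^*_n$ is the epimorphism from Proposition~\ref{prop-4.3.5-1} and $\boldsymbol{\upsilon}:\mathbf{FB}^*_n\to\mathbf{SIS}_n$ is the restriction-to-pure-singletons homomorphism from Lemma~\ref{lem-isnB-dual}. As a composite of monoid homomorphisms, $\boldsymbol{\varphi}$ is automatically a homomorphism, so the defining relations of $\mathbf{SB}(W)$ need not be re-checked by hand; and uniqueness is immediate, since $\mathbf{SB}(W)$ is generated by the $\sigma_{s_i}$ and $\tau_{s_i}$, so any homomorphism out of it is pinned down by its values on these generators.

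The substantive step is to confirm that this composite takes the values prescribed in the statement. For the invertible generators this is transparent: $\boldsymbol{\lambda}(\sigma_{s_i})=\tilde{s}_i$ is a genuine signed permutation whose underlying set partition is trivial, so every singleton is pure and restricting returns the same element, yielding $\tilde{s}_i\in\mathbf{SIS}_n$ for $i\geq 1$ and the transposition $\epsilon_1=(1,\overline{1})$ for $i=0$. For the singular generators $\tau_{s_i}$ with $i\geq 1$, I would unwind the tuple description $(\rho,\underline{n},f)$ of $\tilde{\xi}_i$ from Subsection~\ref{s4.3.4}: the unique non-singleton block of $\rho$ is $\{i,i+1\}$, so the pure singletons are exactly the elements of $\underline{n}\setminus\{i,i+1\}$, and $\boldsymbol{\upsilon}(\tilde{\xi}_i)$ is the identity on the complement of $\{i,i+1,\overline{i},\overline{i+1}\}$, namely $\tilde{\varepsilon}_{\{i,i+1\}}$.

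I expect the image of $\tau_{s_0}$ to be the only place requiring genuine care, since this is exactly where the type $B$ story departs from type $A$. Here $\tilde{\xi}_0$ corresponds to the tuple $(\rho,\underline{n}\setminus\{1\},f)$ with $\rho$ the equality relation, so $1\notin Y$; in the equivalence-relation picture on $\underline{\overline{n}}\cup\underline{n}$ this forces $\{1,\overline{1}\}$ to be a single self-paired block, whereas each $\{j,\overline{j}\}$ with $j\neq 1$ splits into two mutually conjugate singleton blocks. Hence $1$ fails to be a pure singleton while $2,\dots,n$ remain pure, and restricting to pure singletons gives the identity on the complement of $\{1,\overline{1}\}$, which is precisely $\tilde{\varepsilon}_{\{1\}}$. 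Once this pure-singleton and sign bookkeeping is verified, the proof is complete: surjectivity of $\boldsymbol{\varphi}$ follows immediately, being a composite of the surjections $\boldsymbol{\lambda}$ (Proposition~\ref{prop-4.3.5-1}) and $\boldsymbol{\upsilon}$ (Lemma~\ref{lem-isnB-dual}).
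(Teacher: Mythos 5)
Your proposal is correct, but it runs along a slightly different track than the paper's own proof. The paper records the identity $\boldsymbol{\varphi}=\boldsymbol{\upsilon}\circ\boldsymbol{\lambda}$ only as a remark placed between the statement and the proof; the proof itself establishes existence by directly checking the defining relations \eqref{sbm-eq1}--\eqref{sbm-eq5} for the prescribed images of the generators, and gets surjectivity from the fact that $\mathbf{SIS}_n$ is generated by $\mathbf{IS}_n$ together with $\epsilon_1$. You instead promote the factorization to the proof, exactly as the paper does in type $A$ for Proposition~\ref{prop-4.2.2-1}: existence and surjectivity come for free from Proposition~\ref{prop-4.3.5-1} and Lemma~\ref{lem-isnB-dual}, and the only real work is the bookkeeping showing that the composite hits the prescribed values --- in particular your analysis of $\tilde{\xi}_0$, where $1\notin Y$ forces $\{1,\overline{1}\}$ into a single self-paired block so that $1$ fails to be a pure singleton, is the correct and the only delicate point. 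Your route buys uniformity with the type $A$ argument and avoids re-verifying the singular Artin relations in $\mathbf{SIS}_n$, at the cost of leaning on Lemma~\ref{lem-isnB-dual}, whose proof the paper itself only sketches by analogy; the paper's direct verification is self-contained and makes the surjectivity argument independent of the surjectivity of $\boldsymbol{\upsilon}$. Both are valid.
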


Note that $\boldsymbol{\varphi}=\boldsymbol{\upsilon}\circ \boldsymbol{\lambda}$.

\begin{proof}
The first statement follows by checking the defining relations of $\mathbf{SB}(W)$
for the images of the generators under $\boldsymbol{\varphi}$ 
as prescribed in the formulation. 
The second statement follows from the easy fact that $\mathbf{SIS}_n$
is generated by $\mathbf{IS}_n$ and $\epsilon_1$.
\end{proof}

As in type $A$, there is some space for generalizations of this map in the linear
setting by using some scalars on the right hand side.

\subsubsection{Presentation in type $B$}\label{s4.2.6}

Set $\tilde{s}_0:=\epsilon_1$, $t_0:=\varepsilon_{\{1\}}$
and also $t_i:=\varepsilon_{\{i,i+1\}}$, for $i=1,2,\dots,n-1$.
We have the following presentation for ${\mathbf{SIS}}_n$.

\begin{proposition}\label{prop-4.2.6-1}
The monoid ${\mathbf{SIS}}_n$ is generated by the elements 
$\tilde{s}_i$ and $t_i$, where we have $i=0,1,\dots,n-1$, subject to
\begin{itemize}
\item the relations \eqref{sbm-eq1}, \eqref{sbm-eq2}, 
\eqref{sbm-eq3}, \eqref{sbm-eq4} and \eqref{sbm-eq5}
(for $\tilde{s}_i$  instead of $\sigma_{s_i}$ and $t_i$ instead  of $\tau_{s_i}$);
\item  the relations \eqref{prop-4.3.3-1.1}, \eqref{prop-4.3.3-1.2}, \eqref{prop-4.3.3-1.3}
and \eqref{prop-4.3.3-1.4} (including for $i=0$ and
for $\tilde{s}_i$  instead of $s_i$ and for $t_i$ instead of $\xi_i$); 
\item the relation  \eqref{eq-newsingrelation} and, additionally,
\item the relation
\begin{equation}\label{prop-4.2.6-1.1}
t_0\tilde{s}_1t_0=t_1. 
\end{equation}
\end{itemize}
\end{proposition}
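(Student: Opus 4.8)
The plan is to mimic the type $A$ argument of Proposition~\ref{prop-4.2.3-1}, but starting from the presentation of $\mathbf{FB}^*_n$ in Proposition~\ref{prop-4.3.7-1} instead of that of $\mathbf{F}^*_n$. Denote by $Q$ the monoid defined by the presentation in the statement, with abstract generators $x_i$ (playing the role of $\tilde s_i$) and $\mathtt x_i$ (playing the role of $t_i$). The images of $\tilde s_i$ and $t_i$ in $\mathbf{SIS}_n$ satisfy all the listed relations: the relations inherited from Proposition~\ref{prop-4.2.5-1}, the factorizable relations \eqref{prop-4.3.3-1.1}--\eqref{prop-4.3.3-1.4}, relation \eqref{eq-newsingrelation} exactly as in type $A$, and relation \eqref{prop-4.2.6-1.1}, which is the direct check $t_0\tilde s_1 t_0=t_1$. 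Hence there is a surjection $\psi\colon Q\tto\mathbf{SIS}_n$ agreeing with $\boldsymbol{\varphi}$ on generators, and it remains to prove $|Q|\le|\mathbf{SIS}_n|$.

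First I would show that $Q$ is a quotient of $\mathbf{FB}^*_n$, by deriving the three extra relations \eqref{prop-4.3.7-1-rel1}--\eqref{prop-4.3.7-1-rel3} of Proposition~\ref{prop-4.3.7-1} from the relations defining $Q$. Using \eqref{sbm-eq5} and \eqref{prop-4.3.3-1.4} (for $i=0$) together with \eqref{prop-4.2.6-1.1}, one obtains $x_0\mathtt x_1=\mathtt x_1=\mathtt x_1 x_0$ and $x_1\mathtt x_1=\mathtt x_1=\mathtt x_1 x_1$; a short manipulation, together with idempotency \eqref{prop-4.3.3-1.2}, then collapses both sides of each of \eqref{prop-4.3.7-1-rel1}--\eqref{prop-4.3.7-1-rel3} to $\mathtt x_1$. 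By Proposition~\ref{prop-4.3.7-1}, $Q$ is therefore the quotient of $\mathbf{FB}^*_n$ modulo the congruence generated by \eqref{eq-newsingrelation} and \eqref{prop-4.2.6-1.1}, and the map $\boldsymbol{\upsilon}$ of Lemma~\ref{lem-isnB-dual} factors as $\psi$ composed with this quotient. Being a homomorphic image of the inverse monoid $\mathbf{FB}^*_n$, the monoid $Q$ is again an inverse monoid, so $E(Q)$ is the image of $E(\mathbf{FB}^*_n)$; moreover $Q$ is factorizable, every element having the form $\sigma\xi$ with $\sigma$ in the image $G$ of the group of units and $\xi\in E(Q)$. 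Since $\boldsymbol{\upsilon}$ is already injective on signed permutations, $G$ is the full Coxeter group of type $B_n$, of order $2^n n!$.

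The heart of the argument, and the step I expect to be the main obstacle, is to bound the idempotents: I would prove that in $Q$ the image of an idempotent of $\mathbf{FB}^*_n$ depends only on its set $A\subseteq\underline{n}$ of pure singletons, so that $|E(Q)|\le 2^n=|E(\mathbf{SIS}_n)|$. Recall that an idempotent of $\mathbf{FB}^*_n$ is a symmetric equivalence relation on $\underline{\overline{n}}\cup\underline{n}$, and that a position $i$ fails to be a pure singleton precisely when it lies either in a non-singleton block or in a self-paired class $\{i,\overline{i}\}$. The relation \eqref{eq-newsingrelation} and its $G$-conjugates merge non-singleton blocks exactly as in the type $A$ reduction of Proposition~\ref{prop-4.2.3-1}, while the new relation \eqref{prop-4.2.6-1.1} and its $G$-conjugates identify a block-killed position with a sign-merged one; this is the content of $t_0\tilde s_1 t_0=t_1$ read as an equality between the two ways of making positions ``dead''. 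Carrying out this reduction shows that every idempotent equals, in $Q$, the standard idempotent $\mathtt e_A$ whose alive positions are exactly $A$. Controlling the interaction of the sign generator $\tilde s_0$ with these two sources of dead positions is the only genuinely new work, and I would leave most of its bookkeeping to the reader, in the spirit of the proof of Proposition~\ref{prop-4.3.7-1}.

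Finally I would run the factorizable count. For $\mathtt e_A$ with $|A|=k$, the relations \eqref{prop-4.3.3-1.4} (including $i=0$) and their conjugates show that the right stabilizer of $\mathtt e_A$ in $G$ contains the signed symmetric group on the complement $\underline{n}\setminus A$, of order $2^{n-k}(n-k)!$; hence the number of distinct elements $\sigma\mathtt e_A$ is at most $2^k\,n!/(n-k)!$, which is exactly the number of elements of $\mathbf{SIS}_n$ with domain support $A$. Summing over all $A$ gives
\[
|Q|\ \le\ \sum_{k=0}^{n}\binom{n}{k}\,2^k\,\frac{n!}{(n-k)!}\ =\ \sum_{k=0}^{n}\binom{n}{k}^2 k!\,2^k\ =\ |\mathbf{SIS}_n|,
\]
so $\psi$ is a bijection and therefore an isomorphism, which completes the proof.
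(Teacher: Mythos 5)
Your proposal is correct in substance and ends with the same factorizable cardinality count as the paper, but it is organized differently. The paper does not pass through $\mathbf{FB}^*_n$ at all: it works directly with the monoid $Q$ defined by the presentation, introduces the set $T$ of conjugates of $\mathsf{t}_0$ by the group of units $W$, shows $|T|=n$ by a stabilizer estimate, proves that the elements of $T$ are commuting idempotents (the key computation being $\mathsf{r}_1\mathsf{r}_2=\mathsf{t}_0\mathsf{s}_1\mathsf{t}_0\mathsf{s}_1=\mathsf{t}_1\mathsf{s}_1=\mathsf{t}_1$, which uses \eqref{prop-4.2.6-1.1} and \eqref{prop-4.3.3-1.4}), concludes $|\langle T\rangle|\leq 2^n$, and then observes that $\mathsf{t}_1=\mathsf{r}_2\mathsf{r}_1\in\langle T\rangle$ forces all idempotent generators, hence all idempotents, into $\langle T\rangle$. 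Your detour through Proposition~\ref{prop-4.3.7-1} is legitimate --- the derivation of \eqref{prop-4.3.7-1-rel1}--\eqref{prop-4.3.7-1-rel3} from $x_0\mathtt{x}_1=\mathtt{x}_1=\mathtt{x}_1x_0$ and $\mathtt{x}_0x_1\mathtt{x}_0=\mathtt{x}_1$ does go through --- and it buys you factorizability of $Q$ and the identification of $E(Q)$ with the image of $E(\mathbf{FB}^*_n)$ for free. What it costs you is exactly the step you defer to the reader: showing that every idempotent of $\mathbf{FB}^*_n$ collapses in $Q$ to $\mathtt{e}_A$ for $A$ its set of pure singletons. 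As stated this is not mere bookkeeping, but it is discharged by the same identity the paper exploits: \eqref{prop-4.2.6-1.1} and its conjugates give $\mathtt{t}_{i,j}=r_ir_j$, and conjugating by the sign change at a single position (which fixes each $r_k$ by \eqref{prop-4.3.3-1.4} for $i=0$ together with \eqref{sbm-eq5}) gives $\overline{\mathtt{t}}_{i,j}=r_ir_j$ as well; hence every generating idempotent of $E(\mathbf{FB}^*_n)$ already lies in the subsemigroup generated by the $n$ commuting idempotents $r_1,\dots,r_n$, and the collapse to $\mathtt{e}_A$ is immediate. I would replace the appeal to deferred bookkeeping by this one-line argument; with that in place, your stabilizer bound and the count $\sum_k\binom{n}{k}2^k\,n!/(n-k)!=|\mathbf{SIS}_n|$ agree with the paper's and the proof is complete.
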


\begin{proof}
It is easy to check that the generators of ${\mathbf{SIS}}_n$ satisfy all
the prescribed relations. Let $Q$ denote the semigroup generated by 
$\mathsf{s}_i$ and $\mathsf{t}_i$, for $i=0,1,\dots,n-1$, subject to the 
analogues of the relations \eqref{sbm-eq1}--\eqref{sbm-eq5}, the relations 
\eqref{prop-4.3.3-1.1}--\eqref{prop-4.3.3-1.4},
the relation  \eqref{eq-newsingrelation}
and the relation \eqref{prop-4.2.6-1.1}. 
We have a canonical surjective map $\pi:Q\to \mathbf{SIS}_n$ sending 
$\mathsf{s}_i$ to $\tilde{s}_i$ and $\mathsf{t}_i$ to $t_i$, for $i=0,1,\dots,n-1$. 
We need to show that $\pi$ is an isomorphism.

The group $W$ of units of $Q$ is generated by the elements $\mathsf{s}_i$,
where $i=0,1,\dots,n-1$, and hence is isomorphic to the group 
of signed permutations of $n$ elements (which is a Coxeter group of
type $B_n$). Consider the set 
\begin{displaymath}
T:=\{\sigma\mathsf{t}_0\sigma^{-1}\,:\,\sigma\in W\}\subset Q.
\end{displaymath}
The centralizer $C$ of $\mathsf{t}_0$ in $W$ under conjugation 
contains all $\mathsf{s}_i$,
for $i\neq 1$, due to \eqref{sbm-eq3} and \eqref{sbm-eq5}
as well as $\mathsf{s}_1\mathsf{s}_0\mathsf{s}_1$ (again,
due to \eqref{sbm-eq3}). The subgroup $N$ of
$W$ generated by all these elements has index $n$. Since
$|\pi(T)|=n$, it follows that $C=N$. Consequently, $|T|=n$ and
$\pi$ is injective, when restricted to $T$. Set
\begin{equation}\label{eq-4.2.6-2}
\mathsf{r}_1:=\mathsf{t}_0,\quad
\mathsf{r}_2:=\mathsf{s}_1\mathsf{t}_0\mathsf{s}_1,\quad
\mathsf{r}_3:=\mathsf{s}_2\mathsf{s}_1\mathsf{t}_0\mathsf{s}_1\mathsf{s}_2,
\text{ and so on}.
\end{equation}
Then $T=\{\mathsf{r}_i\,:\,i=1,2,\dots,n\}$
and $\pi(\mathsf{r}_i)=\tilde{\varepsilon}_{\{i\}}$, for $i=1,2,\dots,n$.

\begin{lemma}\label{lem-4.2.6-3}
The elements in $T$ commute.
\end{lemma}

\begin{proof}
Using conjugation by elements in $W$, it is enough to show that 
$\mathsf{r}_1$ commutes with all other $\mathsf{r}_i$.
Note that, for $i>1$, the element  $\mathsf{r}_i$ is a product 
of $\mathsf{r}_2$ and some $\mathsf{s}_j$, for $j>1$. Therefore,
using \eqref{sbm-eq3} and \eqref{eq-4.2.6-2}, it is enough to show that
$\mathsf{r}_1\mathsf{r}_2=\mathsf{r}_2\mathsf{r}_1$.

Using \eqref{prop-4.2.6-1.1} and
\eqref{prop-4.3.3-1.4}, we have
\begin{displaymath}
\mathsf{r}_1\mathsf{r}_2=\mathsf{t}_0\mathsf{s}_1\mathsf{t}_0\mathsf{s}_1
=\mathsf{t}_1\mathsf{s}_1=\mathsf{t}_1.
\end{displaymath}
Similarly, $\mathsf{r}_2\mathsf{r}_1=\mathsf{t}_1$ and we are done.
\end{proof}

Each element of $T$ is an idempotent by 
Proposition~\ref{prop-4.3.3-1}\eqref{prop-4.3.3-1.2} and now we know that all these elements commute. 
Therefore the submonoid $\langle T\rangle$ of $Q$
generated by $T$ has at most $2^n$ elements. Since $\pi(\langle T\rangle)$
has exactly $2^n$ elements, we conclude that $|\langle T\rangle|=2^n$.

Using conjugation by $W$, from $\mathsf{r}_2\mathsf{r}_1=\mathsf{t}_1$,
we obtain that all $\mathsf{t}_i$ belong to $\langle T\rangle$.
For $k=1,2,\dots,n$, using \eqref{prop-4.3.3-1.4} 
we obtain that 
\begin{displaymath}
\mathsf{q}\mathsf{r}_1\mathsf{r}_2\cdots \mathsf{r}_k=
\mathsf{r}_1\mathsf{r}_2\cdots \mathsf{r}_k
\end{displaymath}
for any $\mathsf{q}$ in the subgroup $G$ of $W$ generated
by $\mathsf{s}_0$, $\mathsf{s}_1,\dots,$ $\mathsf{s}_{k-1}$.
Note that $G$ is a Coxeter group of type $B_k$.

We can write
each element in $Q$ in the form $ab$, where $a\in W$ and $b\in \langle T\rangle$.
If $b$ is a product of $k$ different $\mathsf{r}_i$, then,
by the previous paragraph, $a$ can be chosen modulo a type $B_k$
Coxeter subgroup of $W$. Now the claim of the proposition follows
by comparing the cardinality of $Q$ with that of ${\mathbf{SIS}}_n$.
\end{proof}

The signed rook monoid and its presentation appears also in \cite{EF13}.
In the recent paper \cite{CE}, one can find an independent general
treatment of presentations for the wreath product of an abelian group
with ${\mathbf{IS}}_n$.

\subsubsection{Possible analogues in other types}\label{s4.2.7}

Propositions~\ref{prop-4.2.3-1} and \ref{prop-4.2.6-1} can be used
to extrapolate a candidate for an analogue of the symmetric inverse 
monoid for an arbitrary Coxeter group. One might note that the additional
relation given by Proposition~\ref{prop-4.2.6-1} involves an asymmetry
that is not a part of the Coxeter datum. Looking carefully into the proofs
of Propositions~\ref{prop-4.2.3-1} and \ref{prop-4.2.6-1} in order to find
some common ground, it is natural to suggest the following definition.

Let $M$ be a Coxeter matrix with the associated Coxeter group $W$, 
the Artin braid group $\mathbf{B}(W)$ and the singular Artin monoid $\mathbf{SB}(W)$.
We define the monoid $\mathbf{FB}(W)$ as the quotient of $\mathbf{SB}(W)$
modulo the relations  \eqref{prop-4.3.3-1.1}-\eqref{prop-4.3.3-1.4} in
Proposition~\ref{prop-4.3.3-1}
and, additionally, the relations that all conjugates of all $\tau_s$ by
all elements in $W$ commute. This set of relations is, clearly, not minimal
and can be cleaned up.

Let $T$ be the set of all conjugates of all $\tau_s$ by all elements in $W$.
This generates a commutative subsemigroup $\langle T\rangle$ of 
$\mathbf{FB}(W)$ consisting of idempotents. From the proofs of 
Propositions~\ref{prop-4.2.3-1} and \ref{prop-4.2.6-1}, we see that 
each element of $\mathbf{FB}(W)$ can be written as $\sigma x$,
where $\sigma\in W$ and $x\in \langle T\rangle$. In particular, this
implies that $\mathbf{FB}(W)$ is finite if and only if $W$ is.

The semigroup $\mathbf{FB}(W)$ is regular as $x \sigma^{-1}$ is
an inverse of $\sigma x$. Furthermore, it is easy to see that
$\langle T\rangle$ coincides with the set of all idempotents of $\mathbf{FB}(W)$.
In particular, all idempotents of $\mathbf{FB}(W)$ commute and hence
$\mathbf{FB}(W)$ is an inverse semigroup. Note that, in type $A$, the semigroup
$\mathbf{FB}(W)$ is isomorphic to $\mathbf{F}^*_n$
(and, in particular, is bigger than $\widetilde{\mathbf{IS}_n}$).
One could probably add the relation \eqref{eq-newsingrelation} 
for type $A$ parabolic subgroups of $W$ to get a smaller 
inverse semigroup.

In type $B$, the semigroup $\mathbf{FB}(W)$
is, in general, bigger than ${\mathbf{SIS}}_n$, as the relation
\eqref{prop-4.2.6-1.1}, being asymmetric (for example, in the dihedral types), 
cannot follow from the symmetric relations defining $\mathbf{FB}(W)$, in general.

To understand the structure and properties of $\mathbf{FB}(W)$
for general Coxeter groups seems to be a natural and interesting problem.

\subsection{Brauer algebra}\label{s4.4}

\subsubsection{The classical Brauer algebra}\label{s4.4.1}

Let $n\in \mathbb{Z}_{>0}$ and $\delta$ be an indeterminate.
The {\em classical Brauer algebra} $\mathbf{Br}_n(\mathbb{Z},\delta)$, defined in \cite{Br},
is an algebra over $\mathbb{Z}[\delta]$ with a basis given by all
{\em Brauer diagrams} on $2n$ vertices $\underline{n}\cup\underline{n}'$.
Such a Brauer diagram is a partition of $\underline{n}\cup\underline{n}'$
into two-element disjoint subsets, depicted as an unoriented graph
with the vertices from $\underline{n}$ in the top row, 
the vertices from $\underline{n}'$ in the bottom row and  edges 
between the elements belonging to the same subsets. 
Here is an example of a Brauer diagram
for $n=4$:
\begin{displaymath}
\xymatrix@R=5mm@C=7mm{
1\ar@{-}[rrrd]&2\ar@/_8pt/@{-}[r]&3&4\ar@{-}[dll]\\
1'\ar@/^8pt/@{-}[rr]&2'&3'&4'
}
\end{displaymath}
Multiplication is given by concatenation of two such diagrams by identifying
the primed vertices of the first with the ordinary vertices of the second,
which induces a partition of the  remaining vertices into two-element subsets,
and then multiplying with $\delta^k$, where $k$ is the number of connected  
components supported only on the identified (and subsequently removed) vertices.
Here is an example:
\begin{displaymath}
\xymatrix@R=3mm@C=7mm{
1\ar@/_8pt/@{-}[r]&2&3\ar@{-}[d]&&&&1\ar@/_8pt/@{-}[r]&2&3\ar@{-}[ddddll]\\
1'\ar@/^8pt/@{-}[r]\ar@{.}[dd]&2'\ar@{.}[dd]&3'\ar@{.}[dd]\\
&\circ&&=&&\delta\cdot&&\\
1\ar@/_8pt/@{-}[r]&2&3\ar@{-}[dll]\\
1'&2'\ar@/^8pt/@{-}[r]&3'&&&&1'&2'\ar@/^8pt/@{-}[r]&3'\\
}
\end{displaymath}

The diagrams in which each subset intersects both $\underline{n}$ and
$\underline{n}'$ form a group isomorphic to the symmetric group $S_n$.
In particular, $\mathbb{Z}[S_n]\subset \mathbf{Br}_n(\mathbb{Z},\delta)$.
One can extend scalars from $\mathbb{Z}$ to any commutative ring $\Bbbk$
and evaluate $\delta$ at any element $d$ of the latter ring.
This gives the algebra $\mathbf{Br}_n(\Bbbk,d)$.

\subsubsection{Map from singular braid monoid}\label{s4.4.2}

As usual, for $i=1,2,\dots,n-1$, we denote by $s_i$ the 
transposition $(i,i+1)$  considered as an element of 
$\mathbf{Br}_n(\mathbb{Z},\delta)$. We also denote by 
$\mathtt{s}_i$ the Brauer diagram
\begin{displaymath}
\xymatrix@R=7mm@C=5mm{
1\ar@{-}[d]&2\ar@{-}[d]&\dots&{}_{i-1}\ar@{-}[d]&i\ar@/_8pt/@{-}[r]&{}^{{}_{i+1}}
&{}^{{}_{i+2}}\ar@{-}[d]&\dots&n\ar@{-}[d]\\
1'&2'&\dots&{}_{(i-1)'}&i'\ar@/^8pt/@{-}[r]&{}_{(i+1)'}&{}_{(i+2)'}&\dots&n'
}
\end{displaymath}

\begin{proposition}\label{prop-4.4.2-1}
There is a unique homomorphism $\chi:\mathbf{SB}(S_n)\to
\mathbf{Br}_n(\mathbb{Z},\delta)$ such that
\begin{displaymath}
\chi(\sigma_{s_i})=s_i\text{ and }
\chi(\tau_{s_i})=\mathtt{s}_i,\text{ for }i=1,2,\dots,n-1.
\end{displaymath}
\end{proposition}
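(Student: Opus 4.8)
The plan is to follow the same template as Propositions~\ref{prop-4.7.2-1} and \ref{prop-4.3.2-1}: existence of $\chi$ amounts to checking that the prescribed images $s_i$ and $\mathtt{s}_i$ satisfy the defining relations \eqref{sbm-eq1}--\eqref{sbm-eq5} of $\mathbf{SB}(S_n)$, and uniqueness is then automatic, since $\sigma_{s_i}$ and $\tau_{s_i}$ generate $\mathbf{SB}(S_n)$. Because both $s_i$ and $\mathtt{s}_i$ are honest Brauer diagrams, every relation to be verified is an identity of diagrams, checked by concatenating and tracing strands; the only subtlety one must watch for is whether a product creates a closed component, which would contribute a factor of $\delta$ and break the identity.

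First I would dispose of the easy relations. In type $A_{n-1}$ one has $m_{s_i,s_j}=3$ for $|i-j|=1$ and $m_{s_i,s_j}=2$ for $|i-j|\geq 2$. The relations \eqref{sbm-eq1} among the $s_i$ hold because the permutation diagrams form a copy of $S_n$ inside $\mathbf{Br}_n(\mathbb{Z},\delta)$. For $|i-j|\geq 2$ the diagrams $\mathtt{s}_i$ and $s_j$ (respectively $\mathtt{s}_i$ and $\mathtt{s}_j$) are supported on disjoint sets of strands, so the commutation \eqref{sbm-eq3} (which for $m_{s,t}=2$ reads $\tau_s\sigma_t=\sigma_t\tau_s$) and the commutation \eqref{sbm-eq4} are immediate and produce no closed loops. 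For \eqref{sbm-eq5}, a one-line diagram computation gives $\mathtt{s}_i s_i = s_i \mathtt{s}_i = \mathtt{s}_i$: stacking the transposition above or below the cup-cap merely re-routes the cap through the crossing, returning the same diagram and creating no loop.

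The hard part, and the real content of the statement, is the mixed braid relation \eqref{sbm-eq2} for adjacent generators, which here reads
\begin{displaymath}
\mathtt{s}_i\, s_{i+1}\, s_i \;=\; s_{i+1}\, s_i\, \mathtt{s}_{i+1}.
\end{displaymath}
I would verify this by restricting attention to the three strands in positions $i,i+1,i+2$ (all other strands pass straight through) and computing both products explicitly. Stacking $\mathtt{s}_i$ over $s_{i+1}$ over $s_i$ and tracing components yields a diagram with a cup joining the top points $i,i+1$, a cap joining the bottom points $(i+1)',(i+2)'$, and a single through-strand from top $i+2$ to bottom $i'$; stacking $s_{i+1}$ over $s_i$ over $\mathtt{s}_{i+1}$ produces the identical configuration. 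Crucially, neither product closes off an internal loop, so no power of $\delta$ appears and the two sides agree on the nose. This step is where the interaction between the cup-cap generator and the braiding is genuinely used, and it is the only place where one could plausibly slip; everything else is bookkeeping on disjoint or trivially interacting strands.

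Once all of \eqref{sbm-eq1}--\eqref{sbm-eq5} are confirmed, the presentation of $\mathbf{SB}(S_n)$ by generators and relations yields the homomorphism $\chi$, and its uniqueness follows since the prescribed values on $\sigma_{s_i}$ and $\tau_{s_i}$ determine $\chi$ on all of $\mathbf{SB}(S_n)$.
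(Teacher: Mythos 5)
Your proposal is correct and follows exactly the paper's approach: the paper's proof likewise consists of the straightforward diagrammatic verification of relations \eqref{sbm-eq1}--\eqref{sbm-eq5} for $s_i$ and $\mathtt{s}_i$, with uniqueness from the fact that the $\sigma_{s_i}$ and $\tau_{s_i}$ generate $\mathbf{SB}(S_n)$. Your explicit check of the mixed braid relation (cup at $i,i+1$, cap at $(i+1)',(i+2)'$, through-strand $i+2\to i'$, no closed loop) is accurate and simply spells out what the paper leaves as ``straightforward verification.''
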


\begin{proof}
The existence claim follows from the straightforward verification of the 
analogues  of the relations \eqref{sbm-eq1}, \eqref{sbm-eq2}, \eqref{sbm-eq3}, 
\eqref{sbm-eq4} and \eqref{sbm-eq5} for the elements $s_i$ and $\mathtt{s}_i$.
The uniqueness claim follows from the fact that $\sigma_{s_i}$ and $\tau_{s_i}$
generate $\mathbf{SB}(S_n)$. 
\end{proof}

\subsubsection{Presentation for the Brauer algebra}\label{s4.4.3}

In \cite[Theorem~3.1]{KuMaz}, one can find a presentation of the Brauer
monoid $\mathbf{Br}_n$ (this corresponds to the case $\delta=1$) which is obtained from
the presentation of $\mathbf{SB}(S_n)$ by adding the additional relations
(for the images of the generators under $\chi$):
\begin{eqnarray}
s_i^2&=&e,\,\,\label{rel-br.0}\\
\mathtt{s}_i^2&=&\mathtt{s}_i,\,\, \label{rel-br.1}\\
\mathtt{s}_is_i=s_i\mathtt{s}_i&=&\mathtt{s}_i,\,\, \label{rel-br.3}\\
\mathtt{s}_i\mathtt{s}_{i\pm 1}\mathtt{s}_i&=&\mathtt{s}_i,\,\,\label{rel-br.2}\\
\mathtt{s}_i \mathtt{s}_{i\pm 1}{s}_i&=&\mathtt{s}_is_{i\pm 1},\,\,\label{rel-br.4}\\
{s}_i\mathtt{s}_{i\pm 1}\mathtt{s}_i&=&s_{i\pm 1}\mathtt{s}_i.\label{rel-br.5}
\end{eqnarray}
In other words, the kernel of $\chi$ is generated by the above relations.
We note the following:

\begin{lemma}
The relation \eqref{rel-br.2} is redundant.
\end{lemma}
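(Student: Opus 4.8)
The plan is to show that the word $\mathtt{s}_i\mathtt{s}_{i\pm1}\mathtt{s}_i$ can be reduced to $\mathtt{s}_i$ using only the relations other than \eqref{rel-br.2}, so that adding \eqref{rel-br.2} to the presentation changes nothing. Write $j:=i\pm1$ throughout. Since $m_{s_i,s_j}=3$ for both signs and since the relevant instances of \eqref{rel-br.3}, \eqref{rel-br.4}, \eqref{rel-br.5} as well as the singular braid relation \eqref{sbm-eq2} are all stated uniformly for the index $i\pm1$, it suffices to treat the two signs simultaneously; in particular the $m=3$ instance of \eqref{sbm-eq2} reads $\mathtt{s}_is_js_i=s_js_i\mathtt{s}_j$ in either case.

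The key idea is that one cannot collapse the purely idempotent word $\mathtt{s}_i\mathtt{s}_j\mathtt{s}_i$ directly; instead one must first ``open it up'' by inserting Coxeter generators via \eqref{rel-br.3} (which allows $\mathtt{s}_i$ to be replaced by $s_i\mathtt{s}_i$), so that the mixed relations \eqref{rel-br.4}, \eqref{rel-br.5} and the braid relation \eqref{sbm-eq2} become applicable. Concretely, I would run the following chain of equalities:
\begin{align*}
\mathtt{s}_i\mathtt{s}_j\mathtt{s}_i
&= \mathtt{s}_i\mathtt{s}_j s_i\mathtt{s}_i && \text{by \eqref{rel-br.3}},\\
&= \mathtt{s}_i s_j\mathtt{s}_i && \text{by \eqref{rel-br.4}},\\
&= \mathtt{s}_i s_j s_i\mathtt{s}_i && \text{by \eqref{rel-br.3}},\\
&= s_j s_i\mathtt{s}_j\mathtt{s}_i && \text{by \eqref{sbm-eq2}},\\
&= s_j s_j\mathtt{s}_i && \text{by \eqref{rel-br.5}},\\
&= \mathtt{s}_i && \text{by \eqref{rel-br.0}}.
\end{align*}
From this one concludes that \eqref{rel-br.2} is a consequence of \eqref{rel-br.0}, \eqref{rel-br.3}, \eqref{rel-br.4}, \eqref{rel-br.5} together with \eqref{sbm-eq2}, and is therefore redundant. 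I note that the derivation uses neither \eqref{rel-br.1} nor \eqref{rel-br.2} itself.

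The main obstacle here is not the verification --- each step is an immediate application of a single relation --- but the discovery of the correct sequence: one has to introduce the generators $s_i$ at precisely the right spots so that the \emph{asymmetric} mixed relations \eqref{rel-br.4} and \eqref{rel-br.5} line up with the braid relation. A secondary point to confirm is that all the cited relations hold with the correct index placement for both $j=i+1$ and $j=i-1$; this is exactly what the ``$i\pm1$'' formulation of \eqref{rel-br.4}, \eqref{rel-br.5} and the $m=3$ instance of \eqref{sbm-eq2} guarantee, so no separate argument for the two signs is required.
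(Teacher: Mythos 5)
Your derivation is correct and follows essentially the same route as the paper's own proof: open the word up with \eqref{rel-br.3}, convert one $\mathtt{s}_{i\pm1}$ to $s_{i\pm1}$ via \eqref{rel-br.4}, apply the odd braid relation \eqref{sbm-eq2}, eliminate the remaining $\mathtt{s}_{i\pm1}$ via \eqref{rel-br.5}, and finish with $s_{i\pm1}^2=e$. Your version is in fact slightly cleaner (it skips some insert-and-cancel steps with $s_i^2$ and $s_{i+1}^2$) and explicitly covers both signs $i\pm1$, which the paper leaves implicit.
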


\begin{proof}
We have:
\begin{displaymath}
\begin{array}{rcl}
\mathtt{s}_i\mathtt{s}_{i+1}\mathtt{s}_i
&\overset{\eqref{rel-br.0}}{=}&\mathtt{s}_i\mathtt{s}_{i+1}s_i^2\mathtt{s}_i\\
&\overset{\eqref{rel-br.3}}{=}&\mathtt{s}_i\mathtt{s}_{i+1}s_i\mathtt{s}_i\\
&\overset{\eqref{rel-br.4}}{=}&\mathtt{s}_is_{i+1}\mathtt{s}_i\\
&\overset{\eqref{rel-br.0}}{=}&s_{i+1}^2\mathtt{s}_is_{i+1}s_i^2\mathtt{s}_i\\
&\overset{\eqref{rel-br.0},\eqref{rel-br.3}}{=}&
s_{i+1}s_i^2s_{i+1}\mathtt{s}_is_{i+1}s_i\mathtt{s}_i\\
&\overset{\eqref{sbm-eq2}}{=}&
s_{i+1}s_i\mathtt{s}_{i+1}\mathtt{s}_i\\
&\overset{\eqref{rel-br.5}}{=}&
s_{i+1}s_{i+1}\mathtt{s}_i\\
&\overset{\eqref{rel-br.0}}{=}&
\mathtt{s}_i\\
\end{array}
\end{displaymath}

\end{proof}

If $\Bbbk$ is a commutative ring and $\delta\in\Bbbk$ is invertible, then
the above implies a presentation for $\mathbf{Br}_n(\Bbbk,\delta)$ by
correcting $\mathtt{s}_i^2=\mathtt{s}_i$ to $\mathtt{s}_i^2=\delta\mathtt{s}_i$.

\subsubsection{Type $B$ Brauer algebra}\label{s4.4.4}

Similarly to all previous situations (i.e. binary relations, dual symmetric inverse
semigroups and symmetric inverse semigroups), consider the Brauer
algebra $\mathbf{Br}_n^{(2)}(\mathbb{Z},\delta)$ on 
$\underline{n}\cup\underline{\overline{n}}$. It has an obvious automorphism
induced by the swapping $i\leftrightarrow\overline{i}$. We denote by 
$\mathbf{Br}_n^B(\mathbb{Z},\delta)$ the subalgebra of 
$\mathbf{Br}_n^{(2)}(\mathbb{Z},\delta)$ generated by all
Brauer diagrams on $\underline{n}\cup\underline{\overline{n}}$ that are
invariant under this automorphism. We emphasize that this is smaller than
the subalgebra of all elements in $\mathbf{Br}_n^{(2)}(\mathbb{Z},\delta)$
invariant under the above automorphism.

For $i=1,2,\dots,n-1$, we denote by $\tilde{\mathtt{s}}_i$ the unique element of
$\mathbf{Br}_n^B(\mathbb{Z},\delta)$ whose restriction to $\underline{n}$
coincides with $\mathtt{s}_i$. We also denote by $\tilde{\mathtt{s}}_0$
the element
\begin{displaymath}
\xymatrix@R=7mm@C=5mm{
\overline{n}\ar@{-}[d]&\dots&\overline{2}\ar@{-}[d]
&\overline{1}\ar@/_8pt/@{-}[r]&1&2\ar@{-}[d]&\dots&n\ar@{-}[d]\\
\overline{n}'&\dots&\overline{2}'&\overline{1}'\ar@/^8pt/@{-}[r]
&1'&2'&\dots&n'
}
\end{displaymath}
Let $\tilde{s}_0,\tilde{s}_1,\dots,\tilde{s}_{n-1}$ be the usual generators of the type $B$
Coxeter group.

A different candidate for a type $B$ Brauer algebra appeared in \cite{CL}.
As far as we can judge, this candidate does not coincide with  
our algebra. For example,  the relation \cite[2.12]{CL}, that is 
$\tilde{s}_0\tilde{s}_1\tilde{\mathtt{s}}_0=\tilde{s}_1\tilde{\mathtt{s}}_0$, does not 
hold for our generators.

\subsubsection{Map from a type $B$ singular Artin monoid}\label{s4.4.5}

Let $W$ be the Coxeter group corresponding to the following (type $B$) Coxeter graph:
\begin{displaymath}
\xymatrix{s_0\ar@{-}[r]^4&s_1\ar@{-}[r]&s_2\ar@{-}[r]&\dots\ar@{-}[r]&s_{n-1}} 
\end{displaymath}

\begin{proposition}\label{prop-4.4.5-1}
There is a unique homomorphism $\boldsymbol{\chi}:\mathbf{SB}(W)\to
\mathbf{Br}_n^B(\mathbb{Z},\delta)$ such that
\begin{displaymath}
\boldsymbol{\chi}(\sigma_{s_i})=\tilde{s}_i\text{ and }
\boldsymbol{\chi}(\tau_{s_i})=\tilde{\mathtt{s}}_i,\text{ for }i=0,1,\dots,n-1.
\end{displaymath}
\end{proposition}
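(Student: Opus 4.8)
The plan is to follow the same template as in Propositions~\ref{prop-4.7.2-1}, \ref{prop-4.7.4-1} and \ref{prop-4.4.2-1}: uniqueness is immediate, since the elements $\sigma_{s_i}$ and $\tau_{s_i}$ generate $\mathbf{SB}(W)$, so I would only need to check that the prescribed images $\tilde{s}_i$ and $\tilde{\mathtt{s}}_i$ satisfy the defining relations \eqref{sbm-eq1}--\eqref{sbm-eq5} of the type $B$ singular Artin monoid inside $\mathbf{Br}_n^B(\mathbb{Z},\delta)$. A useful preliminary observation is that each of the relations \eqref{sbm-eq1}--\eqref{sbm-eq5} contains at most one occurrence of a $\tau$-generator on each side; consequently the corresponding diagrammatic products involve at most a single cap--cup pair and can never close a loop. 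Hence no power of $\delta$ is ever produced, and the entire verification takes place among honest Brauer diagrams, with each identity reducing to an equality of diagrams over $\mathbb{Z}[\delta]$.

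The verification then splits according to whether the index $0$ is involved. For relations among generators with indices $i,j\geq 1$, the generators $\tilde{s}_i$ and $\tilde{\mathtt{s}}_i$ restrict, on the unbarred part $\underline{n}$ and (by flip-invariance) on the barred part $\underline{\overline{n}}$ separately, to the type $A$ generators $s_i$ and $\mathtt{s}_i$, and they never connect the two halves. Thus any such relation holds in $\mathbf{Br}_n^B(\mathbb{Z},\delta)$ if and only if its restriction holds on each half, which is exactly the content of Proposition~\ref{prop-4.4.2-1}. This disposes of \eqref{sbm-eq1}, \eqref{sbm-eq2}, \eqref{sbm-eq3}, \eqref{sbm-eq4} and \eqref{sbm-eq5} whenever $0\notin\{i,j\}$. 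Among the relations that do involve $0$, those of the commuting type are immediate from disjointness of supports: the relation \eqref{sbm-eq4} between $\tilde{\mathtt{s}}_0$ (supported on $1,\overline{1}$) and $\tilde{\mathtt{s}}_j$ with $j\geq 2$ holds because the two diagrams act on disjoint sets of strands, and the self-commutation \eqref{sbm-eq5} for $\tilde{s}_0$ and $\tilde{\mathtt{s}}_0$ is a one-line diagram computation, as $\tilde{s}_0$ is a plain crossing carrying no cap or cup. The type $B$ braid relation \eqref{sbm-eq1} for the pair $(s_0,s_1)$, which has $m_{s_0,s_1}=4$, holds because the $\tilde{s}_i$ are precisely the Coxeter generators of the signed permutation group realized inside $\mathbf{Br}_n^B(\mathbb{Z},\delta)$.

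This leaves the genuinely new content: the even mixed braid relation \eqref{sbm-eq3} for the edge $(s_0,s_1)$, in its two instances
\begin{displaymath}
\tilde{\mathtt{s}}_0\,\tilde{s}_1\,\tilde{s}_0\,\tilde{s}_1=\tilde{s}_1\,\tilde{s}_0\,\tilde{s}_1\,\tilde{\mathtt{s}}_0
\qquad\text{and}\qquad
\tilde{\mathtt{s}}_1\,\tilde{s}_0\,\tilde{s}_1\,\tilde{s}_0=\tilde{s}_0\,\tilde{s}_1\,\tilde{s}_0\,\tilde{\mathtt{s}}_1.
\end{displaymath}
Since all four generators act as the identity on the strands indexed by $3,4,\dots,n$ and their barred counterparts, a standard localization argument lets me restrict attention to the strands $1,2,\overline{1},\overline{2}$, so that the whole check takes place in $\mathbf{Br}_2^B(\mathbb{Z},\delta)$ and amounts to drawing and concatenating the two sides as explicit flip-invariant Brauer diagrams. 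I expect this diagrammatic computation, in particular keeping track of how the cap--cup of $\tilde{\mathtt{s}}_0$ (respectively $\tilde{\mathtt{s}}_1$) is transported past the three crossings on each side, to be the main technical obstacle; the loop-free observation above guarantees that no $\delta$-correction can spoil the equality, so it reduces to matching the connected components of the two resulting diagrams. Once these two identities are confirmed, all of \eqref{sbm-eq1}--\eqref{sbm-eq5} hold for the images, the assignment extends to a homomorphism, and uniqueness has already been noted.
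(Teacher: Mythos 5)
Your proposal is correct and takes essentially the same route as the paper: the paper's proof is exactly ``uniqueness because $\sigma_{s_i},\tau_{s_i}$ generate $\mathbf{SB}(W)$, existence by straightforward verification of \eqref{sbm-eq1}--\eqref{sbm-eq5} for $\tilde{s}_i$ and $\tilde{\mathtt{s}}_i$'', and you simply organize that verification more explicitly (reduction to Proposition~\ref{prop-4.4.2-1} on each half for indices $\geq 1$, disjoint supports for the commuting cases, and the two mixed relations at the $4$-edge, which do check out by a direct computation in $\mathbf{Br}_2^B$). One tiny imprecision: relation \eqref{sbm-eq4} has two $\tau$-generators on each side, not at most one, but since those two generators have disjoint supports no loop can close, so your conclusion that no power of $\delta$ ever appears still stands.
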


\begin{proof}
The existence claim follows from the straightforward verification of the 
analogues  of the relations \eqref{sbm-eq1}, \eqref{sbm-eq2}, \eqref{sbm-eq3}, 
\eqref{sbm-eq4} and \eqref{sbm-eq5} for the elements $\tilde{s}_i$ and $\tilde{\mathtt{s}}_i$.
The uniqueness claim follows from the fact that $\sigma_{s_i}$ and $\tau_{s_i}$
generate $\mathbf{SB}(S_n)$. 
\end{proof}

\subsubsection{Connection to the partial Brauer algebra}\label{s4.4.65}

A partial analogue of Brauer monoid was introduced in \cite{Ma98}
and the representation theory of the corresponding partial
Brauer algebra was studied in \cite{MM14}. This algebra,
denoted $\mathbf{PB}_n(\delta,\delta')$, has a basis consisting of {\em partial 
Brauer diagrams}. Such a diagram is a partition of $\underline{n}\cup\underline{n}'$
into singletons and two-element disjoint subsets, depicted as an unoriented graph
with the vertices from $\underline{n}$ in the top row, 
the vertices from $\underline{n}'$ in the bottom row and  edges 
between the elements belonging to the same subsets. 
Here is an example of a partial Brauer diagram
for $n=4$:
\begin{displaymath}
\xymatrix@R=5mm@C=7mm{
1\ar@{-}[rrrd]&2\ar@/_8pt/@{-}[r]&3&4\\
1'\ar@/^8pt/@{-}[rr]&2'&3'&4'
}
\end{displaymath}
Multiplication is given by concatenation of two such diagrams by identifying
the primed vertices of the first with the ordinary vertices of the second,
which induces a partition of the  remaining vertices into singletons and
two-element subsets, and then multiplying with $\delta^k(\delta')^{k'}$,
where $k$ is the number of closed connected components supported 
only on the identified (and subsequently removed) vertices
and $k'$ is the number of not closed connected components supported 
only on the identified (and subsequently removed) vertices.

Let $G$ be an abelian semigroup and $\pi$ a partial Brauer diagram. 
An {\em almost $G$-coloring} of $\pi$ is a map $f$ from the two-element
components of $\pi$ to $G$. Let $\boldsymbol{\delta}=(\delta_i)_{i\in G}$
be a collection of parameters. Given two almost $G$-colored partial
Brauer diagrams $(\pi,f)$ and $(\pi',f')$, we can define their product
as $\displaystyle (\delta')^{k'}\prod_{i\in G}\delta_i^{k_i}(\pi\pi',\mathbf{f})$, where
$\pi\pi'$ and $k'$ are as in the previous paragraph,
the value of $\mathbf{f}$ at some two-element component $C$ is defined
by taking the product of the values of $f$, respectively, $f'$ over
all two-element components of $\pi$ and $\pi'$ that contributed to $C$,
and $k_i$ denotes the number of closed connected components
of total value $i$ (computed similarly as for $C$) that are removed during the
straightening procedure.
This defined on the linear span of all almost $G$-colored partial Brauer diagrams 
the structure of an associative algebra, denoted by 
$\mathbf{APB}_n(G,\boldsymbol{\delta},\delta')$. 
It is informative to compare this  construction with the construction of 
colored partition algebras studied in \cite{MaSr22}.

By considering only the diagrams and forgetting $\boldsymbol{\delta}$ and $\delta'$, we
obtain the corresponding partial Brauer monoid $\mathbf{PB}_n$ and the
almost colored partial Brauer monoid $\mathbf{APB}_n(G)$.

\begin{proposition}\label{prop4.4.65-1}
The monoid  $\mathbf{Br}_n^B$ is isomorphic to the monoid $\mathbf{APB}_n(\mathbb{Z}_2)$.
\end{proposition}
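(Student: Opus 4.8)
The plan is to prove the isomorphism diagrammatically, by constructing an explicit \emph{folding} bijection $\Theta$ and checking that it intertwines the two multiplications. Write $\theta$ for the involution $i\leftrightarrow\overline{i}$ (acting on both rows). Since $\theta$ is an algebra automorphism of $\mathbf{Br}_n^{(2)}(\mathbb{Z},\delta)$ and the product of two $\theta$-invariant diagrams is again $\theta$-invariant up to a power of $\delta$, the diagram monoid $\mathbf{Br}_n^B$ (i.e. $\delta=1$) is precisely the monoid of \emph{all} $\theta$-invariant Brauer diagrams on $\underline{n}\cup\underline{\overline{n}}$. Call the vertices in $\underline{n}$ (top) and $\underline{n}'$ (bottom) \emph{positive}. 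Given a $\theta$-invariant diagram $D$ and a positive vertex $p$ with $D$-partner $q$, define $\Theta(D)$ by the rule: $p$ is a \emph{singleton} if $q=\overline{p}$; there is an edge $\{p,q\}$ of color $0$ if $q$ is positive; and there is an edge $\{p,q'\}$ of color $1$ if $q=\overline{q'}$ with $q'$ positive. Invariance of $D$ makes this assignment well defined and symmetric in the two endpoints, and produces a $\mathbb{Z}_2$-colored partial Brauer diagram on $\underline{n}$.

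First I would check that $\Theta$ is a bijection onto $\mathbf{APB}_n(\mathbb{Z}_2)$ by writing down the inverse \emph{unfolding} map:
\[
\text{singleton }p\ \longmapsto\ \{p,\overline{p}\},\qquad
\{p,q\}^{0}\ \longmapsto\ \{p,q\},\{\overline{p},\overline{q}\},\qquad
\{p,q\}^{1}\ \longmapsto\ \{p,\overline{q}\},\{\overline{p},q\}.
\]
Here $\{p,\overline{p}\}$ is a \emph{fixed edge} (it is $\theta$-stable), which is exactly what a singleton unfolds to. One verifies directly that unfolding produces a $\theta$-invariant Brauer diagram and that the two maps are mutually inverse, so $\Theta$ is a bijection of diagrams.

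It remains to prove that $\Theta$ is a monoid homomorphism, that is, $\Theta(D_1D_2)=\Theta(D_1)\Theta(D_2)$ at $\delta=1$ (with colors retained and the parameters $\boldsymbol{\delta},\delta'$ forgotten). The conceptual point is that unfolding is a $\mathbb{Z}_2$-equivariant double-cover construction: the $2n$ middle vertices of the composite $D_1D_2$ form a free double cover, with deck transformation $\theta$, of the $n$ middle vertices of the composite $\Theta(D_1)\Theta(D_2)$, and the color recorded by $\Theta$ on a folded edge is exactly the monodromy, namely the parity of the number of sign-changing traversals along the corresponding lift. I would then trace each connected component of the stacked folded picture and its lift, in four cases. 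A middle path joining two boundary points with accumulated color $c$ lifts to the pair of big edges realizing a colored boundary edge of color $c$, and color additivity in $\mathbb{Z}_2$ is precisely concatenation of monodromies. A middle path from a boundary point $a$ to a dead end (a singleton or unmatched middle vertex) is the delicate case: its two lifts splice through the corresponding fixed edge $\{j,\overline{j}\}$ into a single big path connecting $a$ and $\overline{a}$, so $a$ and $\overline{a}$ become partners in $D_1D_2$, which folds back to a singleton at $a$, matching the partial Brauer rule. Finally, a closed middle cycle lifts to one or two big cycles according to its monodromy, and an open middle path between two dead ends lifts to closed cycle(s) through the two fixed edges; in all these cases the components are removed at $\delta=1$, so they do not affect the underlying diagram.

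The main obstacle is exactly this third, dead-end case: one must show that a path terminating at a singleton corresponds under unfolding to the two sheets joining through a fixed edge, producing precisely a connection between $a$ and $\overline{a}$ and hence a singleton after refolding, and that the $\mathbb{Z}_2$-coloring behaves additively along such composite paths. Once the four cases are checked, $\Theta$ is a bijective monoid homomorphism, hence an isomorphism $\mathbf{Br}_n^B\cong\mathbf{APB}_n(\mathbb{Z}_2)$; as a sanity check one sees that $\Theta$ sends $\tilde{\mathtt{s}}_0$, $\tilde{\mathtt{s}}_i$ and $\tilde{s}_0$ to the expected colored partial Brauer generators, and one may record in passing that the parameter $\delta'$ of $\mathbf{APB}_n$ corresponds to the Brauer parameter on the big cycles arising from open middle paths.
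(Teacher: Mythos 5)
Your proposal is correct and follows essentially the same route as the paper: the folding map $\Theta$ you define is exactly the correspondence the paper writes out case by case, and the paper then simply asserts that checking it is an isomorphism is easy. Your double-cover/monodromy analysis of the concatenation (including the dead-end case splicing through a fixed edge $\{j,\overline{j}\}$) supplies precisely the verification the paper leaves to the reader.
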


\begin{proof}
Given a diagram $\pi\in \mathbf{Br}_n^B$, we define the corresponding diagram
$\sigma\in \mathbf{APB}_n(\mathbb{Z}_2)$ as follows (here all $i$ and $j$ are positive):
\begin{itemize}
\item If $\pi$ connects $i$ with $\overline{i}$,
then this $i$ is a singleton in $\sigma$.
\item If $\pi$ connects $i'$ with $\overline{i}'$,
then this $i'$ is a singleton in $\sigma$.
\item If $\pi$ connects $i$ with $j$,
then $i$ and $j$ are connected in $\sigma$ and colored by $0$.
\item If $\pi$ connects $i'$ with ${j}'$,
then $i'$ and $j'$ are connected in $\sigma$ and colored by $0$.
\item If $\pi$ connects $i$ with $\overline{j}$, where $i\neq j$,
then $i$ and $j$ are connected in $\sigma$ and colored by $1$.
\item If $\pi$ connects $i'$ with $\overline{j}'$, where $i\neq j$,
then $i'$ and $j'$ are connected in $\sigma$ and colored by $1$.
\item If $\pi$ connects $i$ with $j'$,
then $i$ and $j'$ are connected in $\sigma$ and colored by $0$.
\item If $\pi$ connects $i$ with $\overline{j}'$,
then $i$ and $j'$ are connected in $\sigma$ and colored by $1$.
\end{itemize}
Here is an example:

\resizebox{\textwidth}{!}{
$
\xymatrix@C=5mm@R=15mm{
\overline{4}\ar@{-}[drrrrrrr]&\overline{3}\ar@/_3mm/@{-}[r]&\overline{2}&\overline{1}
\ar@/_3mm/@{-}[r]&1&2\ar@/_3mm/@{-}[r]&3&4\ar@{-}[dlllllll]\\
\overline{4}'&\overline{3}'\ar@/^3mm/@{-}[r]&\overline{2}'&
\overline{1}'\ar@/^3mm/@{-}[r]&1'&2'\ar@/^3mm/@{-}[r]&3'&4'\\
}
\qquad\mapsto \qquad
\xymatrix@C=5mm@R=15mm{
1&2\ar@/_3mm/@{-}[r]_{0}&3&4\ar@{-}[d]^{1}\\
1'&2'\ar@/^3mm/@{-}[r]^{0}&3'&4'
}
$
}

It is easy to check that this defines an isomorphism between 
$\mathbf{Br}_n^B$ and $\mathbf{APB}_n(\mathbb{Z}_2)$. 
\end{proof}

\subsubsection{Presentation of $\mathbf{Br}_n^B(\mathbb{Z},\delta)$}\label{s4.4.6}

We now give a presentation for the type $B$ Brauer monoid $\mathbf{Br}_n^B$,
that is the semigroup formed by all type $B$ Brauer diagrams.
This corresponds to the case $\delta=1$. Note that
$\boldsymbol{\chi}$ restricts to a homomorphism from $\mathbf{SB}(S_n)$
to $\mathbf{Br}_n^B$.

\begin{theorem}\label{prop-4.4.6-1}
The monoid $\mathbf{Br}_n^B$ is generated by $\tilde{s}_i$ and 
$\tilde{\mathtt{s}}_i$, where $i=0,1,\dots,n-1$, subject to 
\begin{itemize}
\item the (analogues of the) relations \eqref{sbm-eq1}, 
\eqref{sbm-eq2}, \eqref{sbm-eq3}, \eqref{sbm-eq4} and \eqref{sbm-eq5}; 
\item the (analogues of the) relations \eqref{rel-br.0}, \eqref{rel-br.1}, 
\eqref{rel-br.3}, \eqref{rel-br.4}, \eqref{rel-br.5}, where all indices are 
greater than $0$;
\item the additional relations
\begin{eqnarray}
\tilde{\mathtt{s}}_0^2&=&\tilde{\mathtt{s}}_0,\label{rel-Bbr.0}\\
\tilde{\mathtt{s}}_1\tilde{\mathtt{s}}_0\tilde{\mathtt{s}}_1&=&\tilde{\mathtt{s}}_1,
\label{rel-Bbr.1}\\
\tilde{\mathtt{s}}_1\tilde{{s}}_0\tilde{\mathtt{s}}_1&=&\tilde{\mathtt{s}}_1,
\label{rel-Bbr.15}\\
\tilde{\mathtt{s}}_0\tilde{\mathtt{s}}_1\tilde{\mathtt{s}}_0&=&
\tilde{\mathtt{s}}_0 \tilde{s}_1\tilde{\mathtt{s}}_0,
\label{rel-Bbr.2}\\
\tilde{s}_1\tilde{\mathtt{s}}_0\tilde{s}_1\tilde{\mathtt{s}}_0=
\tilde{\mathtt{s}}_0\tilde{s}_1\tilde{\mathtt{s}}_0\tilde{s}_1&=&
\tilde{\mathtt{s}}_0 \tilde{s}_1\tilde{\mathtt{s}}_0,
\label{rel-Bbr.3}\\
\tilde{\mathtt{s}}_1\tilde{\mathtt{s}}_0&=&
\tilde{\mathtt{s}}_1 \tilde{s}_1\tilde{\mathtt{s}}_0\tilde{s}_1,
\label{rel-Bbr.4}\\
\tilde{\mathtt{s}}_0\tilde{\mathtt{s}}_1&=&
\tilde{s}_1\tilde{\mathtt{s}}_0\tilde{s}_1\tilde{\mathtt{s}}_1,
\label{rel-Bbr.5}\\
\tilde{\mathtt{s}}_0\tilde{s}_0=\tilde{s}_0\tilde{\mathtt{s}}_0&=&\tilde{\mathtt{s}}_0.
\label{rel-Bbr.6}
\end{eqnarray}
\end{itemize}
\end{theorem}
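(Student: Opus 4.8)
The plan is to adapt the cardinality-counting strategy of Propositions~\ref{prop-4.3.7-1} and \ref{prop-4.2.6-1}. Write $Q$ for the monoid defined by the presentation, with generators $x_i$ (standing for $\tilde{s}_i$) and $\mathtt{x}_i$ (standing for $\tilde{\mathtt{s}}_i$). The first step is routine: one checks that the type $B$ Brauer diagrams $\tilde{s}_i,\tilde{\mathtt{s}}_i$ satisfy every relation in the list. The relations indexed by $i\ge 1$ are exactly the type $A$ Brauer relations already available through Proposition~\ref{prop-4.4.5-1} and \cite{KuMaz}, while \eqref{rel-Bbr.0}--\eqref{rel-Bbr.6} involve only the boundary generators $\tilde{s}_0,\tilde{\mathtt{s}}_0$ together with $\tilde{s}_1,\tilde{\mathtt{s}}_1$, so each reduces to a diagram computation on the two leftmost coordinates. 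Since $\mathbf{Br}_n^B$ is generated by these diagrams, this yields a surjection $\psi\colon Q\tto\mathbf{Br}_n^B$, and it remains to prove $|Q|\le|\mathbf{Br}_n^B|$.

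Next I would determine the coarse anatomy of $Q$. From \eqref{sbm-eq1}, the quadratic relations \eqref{rel-br.0} for $i\ge1$, and the boundary relations (which one checks force $x_0^2=e$), the group $G$ of units of $Q$ is a quotient of the hyperoctahedral group $\{\pm1\}\wr S_n$, hence equal to it by $\psi$. Restricting to the generators $\{x_i,\mathtt{x}_i:i\ge1\}$, the inherited relations are precisely the defining relations of the type $A$ Brauer monoid, so by \cite{KuMaz} the submonoid they generate is a quotient of $\mathbf{Br}_n$; since $\psi$ carries it isomorphically onto the ``doubled type $A$'' subdiagrams of $\mathbf{Br}_n^B$, it is a copy of $\mathbf{Br}_n$ sitting inside $Q$, which handles everything away from the boundary.

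The heart of the proof is a normal form for arbitrary elements of $Q$. Via the isomorphism $\mathbf{Br}_n^B\cong\mathbf{APB}_n(\mathbb{Z}_2)$ of Proposition~\ref{prop4.4.65-1}, an element of the target is specified by a partial-matching shape with $\mathbb{Z}_2$-coloured pairs on the top row, the analogous data on the bottom row, and a sign-decorated bijection between the through-points of the two rows. Accordingly I would show that every element of $Q$ can be brought to a two-sided form $u\,e\,v$, where $e$ is a canonical idempotent recording the common shape and $u,v\in G$, taken from fixed transversals of the left and right stabilizers of $e$, encode the two shapes, the colours, and the through-bijection. The reductions are driven by \eqref{rel-Bbr.6}, absorbing $x_0$ into $\mathtt{x}_0$; by the idempotency relations \eqref{rel-br.1} and \eqref{rel-Bbr.0}, together with \eqref{rel-br.5}, \eqref{rel-Bbr.1} and \eqref{rel-Bbr.15}, collapsing any product of cap--cup generators to the canonical idempotent of its shape; by \eqref{rel-Bbr.2}--\eqref{rel-Bbr.5}, sliding a sign through a cap or a cup so as to fix unambiguously the colour of each horizontal pair; and by a commutation lemma for the boundary conjugates $\mathtt{r}_i=w\,\mathtt{x}_0\,w^{-1}$ (in the spirit of \eqref{eq-4.2.6-2}), an analogue of Lemma~\ref{lem-4.2.6-3} extracted from \eqref{rel-Bbr.3} and \eqref{rel-br.4}.

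The step I expect to be the main obstacle is proving that these reductions leave no residual ambiguity: that the shape, the singleton and sign data, the colours, and the signed through-bijection are simultaneously well-defined invariants of the element, so that the number of resulting normal forms does not exceed $|\mathbf{Br}_n^B|$. Unlike the inverse-semigroup situations of Propositions~\ref{prop-4.3.7-1} and \ref{prop-4.2.6-1}, here the genuine Brauer (cap--cup) part is not governed by a commutative idempotent submonoid, so the form $u\,e\,v$ carries stabilizers on both sides and the colour bookkeeping must be shown compatible with the type $A$ Brauer relations \eqref{rel-br.4} and \eqref{rel-br.5} and with \cite{KuMaz}. I would verify this compatibility on the two-coordinate local model, in the spirit of the graph-rewriting step in the proof of Proposition~\ref{prop-4.3.7-1}, and then propagate it by induction on $n$, using the embedded copy of $\mathbf{Br}_n$ from the previous paragraph. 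Matching the resulting count of normal forms against $|\mathbf{Br}_n^B|$ then gives $|Q|\le|\mathbf{Br}_n^B|$, so that $\psi$ is an isomorphism.
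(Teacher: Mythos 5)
Your overall strategy coincides with the paper's: check the relations to obtain a surjection $\psi\colon Q\tto\mathbf{Br}_n^B$, identify the group of units $G$ with the signed permutation group, form the set $T$ of $G$-conjugates of the singular generators, and bound $|Q|$ by a normal-form count. The difference, and the gap, is at the decisive step. You propose a normal form $u\,e\,v$ with $u,v$ drawn from transversals of the two stabilizers of a canonical idempotent $e$, to be justified by a two-coordinate local check propagated ``by induction on $n$ using the embedded copy of $\mathbf{Br}_n$''; you yourself flag the well-definedness of the shape/colour/through-bijection bookkeeping as the main obstacle and do not resolve it. The paper resolves it by a different decomposition: it writes $\omega=\omega_l\omega_m\omega_r$, where $\omega_l$ (resp.\ $\omega_r$) is the product of \emph{all} elements of $T$ fixing $\omega$ under left (resp.\ right) multiplication --- so these outer factors are invariants of $\omega$ by construction, and only the invertible middle factor carries ambiguity, which is controlled by an absorption lemma (Lemma~\ref{lem-n120}, itself reduced to the partial Brauer presentation of \cite{KuMaz}). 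Existence of this form is then proved by induction on the number of $T$-factors in a shortest expression for $\omega$, with a case analysis of left multiplication by $\mathtt{t}_i$, $\mathtt{t}_{i,j}$ and $\overline{\mathtt{t}}_{i,j}$ according to how the support meets the left defect. Your proposed induction on $n$ is not obviously viable as a substitute: the conjugates of $\tilde{\mathtt{s}}_0$ live at every coordinate and the signed pair idempotents $\overline{\mathtt{t}}_{i,j}$ couple arbitrary coordinates, so the type~$B$ boundary data does not localize away from a rank-one slice, and the embedded $\mathbf{Br}_n$ does not see it. To complete your argument you would need to replace that induction by one on the number of singular factors (or supply an equivalent rewriting argument), which is precisely the content the proposal leaves open.

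A smaller point: you assert that the listed relations ``force $x_0^2=e$''. They do not. Every defining relation either has both sides free of the $\mathtt{x}_i$ (the braid relations and $x_i^2=e$ for $i\ge 1$) or has at least one $\mathtt{x}_i$ on each side, so a word in the $x_i$ alone can only be rewritten into words in the $x_i$ alone using the former relations, and $x_0^2=e$ is not among their consequences. The relation $\tilde{s}_0^2=e$ has to be imposed (the statement's restriction ``all indices greater than $0$'' on \eqref{rel-br.0} should not exclude it); the paper's proof likewise simply asserts that $G$ is the hyperoctahedral group, but your claim that this is \emph{derivable} from the boundary relations is false as written.
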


\begin{proof}
Denote by $Q$ the monoid with the presentation described in the 
formulation generated by $\tilde{x}_i$ (instead of $\tilde{s}_i$)
and $\tilde{\mathtt{x}}_i$ (instead of $\tilde{\mathtt{s}}_i$). 
We have the obvious surjection $\tilde{\boldsymbol{\pi}}:Q\tto \mathbf{Br}_n^B$
which we want to prove is an isomorphism. For this we need to prove
that $|Q|\leq |\mathbf{Br}_n^B|$.

Due to \eqref{sbm-eq1} and \eqref{rel-br.1}, the group $G$ of invertible
element in $Q$ is isomorphic to the signed permutation group on $\underline{n}$.
Denote by $T$ the set of all $G$-conjugates of all $\tilde{\mathtt{x}}_i$,
where $i=0,1,2\dots,n-1$. From \eqref{rel-Bbr.0} and \eqref{rel-br.1}
it follows that all elements in $T$ are idempotents.

By \eqref{rel-br.3}, \eqref{rel-Bbr.6}, \eqref{sbm-eq2}, \eqref{sbm-eq3} and \eqref{sbm-eq5},
the $G$-conjugate stabilizer of $\tilde{\mathtt{x}}_0$ contains both 
$\tilde{x}_0$ and all signed permutations on $\{2,3,\dots,n\}$. Hence the number of
the $G$-conjugates of $\tilde{\mathtt{x}}_0$ is at most
$\frac{2^n\cdot n!}{2\cdot 2^{n-1}(n-1)!}=n$. Since the number of 
the $G$-conjugates of $\tilde{\mathtt{s}}_0$ is exactly $n$,
we obtain that the number of
the $G$-conjugates of $\tilde{\mathtt{x}}_0$ is exactly $n$.
For $i=1,\dots,n$, we denote by $\mathtt{t}_i$ the conjugate of 
$\tilde{\mathtt{x}}_0$ by the transposition $(1,i)$.

Similarly, the $G$-conjugate stabilizer of $\tilde{\mathtt{x}}_1$ contains both 
$\tilde{x}_1$, $\tilde{x}_0\tilde{x}_1\tilde{x}_0$,
$\tilde{x}_1\tilde{x}_0\tilde{x}_1\tilde{x}_0$ and 
all signed permutations on $\{3,4,\dots,n\}$. Hence the number of
the $G$-conjugates of $\tilde{\mathtt{x}}_1$ is at most
$\frac{2^n\cdot n!}{4\cdot 2^{n-2}(n-2)!}=n(n-1)$. Since the number of 
the $G$-conjugates of $\tilde{\mathtt{s}}_1$ is exactly $n(n-1)$,
we obtain that the number of
the $G$-conjugates of $\tilde{\mathtt{x}}_1$ is exactly $n(n-1)$.
For different $i,j\in\{1,\dots,n\}$, we denote by $\mathtt{t}_{i,j}$ 
the conjugate of $\tilde{\mathtt{x}}_1$ by $(1,i)(2,j)$.
From \eqref{rel-br.3}, it follows that $\mathtt{t}_{i,j}=\mathtt{t}_{j,i}$.
Also, for different $i,j\in\{1,\dots,n\}$, we denote by 
$\overline{\mathtt{t}}_{i,j}$  the conjugate of $\tilde{\mathtt{x}}_1$ by 
$(1,\overline{i})(2,j)$. We similarly have 
$\overline{\mathtt{t}}_{i,j}=\overline{\mathtt{t}}_{j,i}$.

From \eqref{rel-Bbr.3} and the relations for
the singular Artin monoids, it follows that all $\mathtt{t}_i$ 
commute with each other. From the relations for
the singular Artin monoids, it follows also that 
$\mathtt{t}_i$ commutes with both $\overline{\mathtt{t}}_{k,j}$
and ${\mathtt{t}}_{k,j}$ provided that $i\not\in \{k,j\}$.
Similarly, both $\overline{\mathtt{t}}_{i,j}$
and $\mathtt{t}_{i,j}$ commute with both 
$\overline{\mathtt{t}}_{k,l}$ and ${\mathtt{t}}_{k,l}$
provided that $\{i,j\}\cap\{k,l\}=\varnothing$.
This implies that different elements of $T$ commute with
each other in $Q$ if and only if their images in
$\mathbf{Br}_n^B$ commute with each other.
We will call $\{i\}$ the support of $\mathtt{t}_{i}$. Similarly,
we will call $\{i,j\}$ the support of both $\overline{\mathtt{t}}_{i,j}$
and $\mathtt{t}_{i,j}$.

Let $\omega\in Q$. Denote by $\omega_l$ the product of all
$\mathtt{t}\in T$ such that $\mathtt{t}\omega=\omega$.
Define the left defect of $\omega$ as the union of 
the supports of all the factors of $\omega_l$.
Note that, if $\mathtt{t}$ is a factor of $\omega_l$,
then $\tilde{\boldsymbol{\pi}}(\mathtt{t})
\tilde{\boldsymbol{\pi}}(\omega)=\tilde{\boldsymbol{\pi}}(\omega)$.
In the monoid $\mathbf{Br}_n^B$, different 
$\tilde{\boldsymbol{\pi}}(\mathtt{t})$ with the above property
commute with each other. This implies that all 
factors of $\omega_l$ commute with each other.
In particular, the left defect of $\omega$ is the disjoint
union of the supports of the factors of $\omega_l$.
Since all elements in $T$ are idempotents, we have
$\omega=\omega_l\omega$.

Denote by $\omega_r$ the product of all
$\mathtt{t}\in T$ such that $\omega\mathtt{t}=\omega$.
Define the right defect of $\omega$ as the union of 
the supports of all the factors of $\omega_r$.
Similarly to the previous paragraph, all 
factor of $\omega_r$ commute with each other
and the right defect of $\omega$ is the disjoint
union of the supports of the factors of $\omega_r$.
Since all elements in $T$ are idempotents, we have
$\omega=\omega\omega_r$.

The following lemma will be useful.

\begin{lemma}\label{lem-n120}
Let $\omega$ be a product of pairwise commuting elements of
$T$ with disjoint supports. Assume that the left defect of
$\omega$ is $\underline{n}$. Then $\omega=\omega u\omega$,
for any $u\in Q$.
\end{lemma}

\begin{proof}
For the classical Brauer and partial Brauer algebras, 
the corresponding claim is clear. In our generalization, we note that
the presentation of the Brauer algebra is included
into our presentation by definition. The presentation of the 
partial Brauer algebra from \cite[Section~5]{KuMaz}
follows easily taking into account our additional relations 
\eqref{rel-Bbr.0}--\eqref{rel-Bbr.6}. Indeed, 
\cite[(15)]{KuMaz} follows from \eqref{rel-Bbr.0}
and the commutativity of the $\mathtt{t}_{i}$'s
established above; \cite[(16)]{KuMaz} follows from
our construction of the $\mathtt{t}_{i}$'s 
and \eqref{rel-Bbr.3}; \cite[(17)]{KuMaz} is a singular
braid relation; \cite[(18)]{KuMaz} follows from
\eqref{rel-Bbr.4} and \eqref{rel-Bbr.5} and, finally,
\cite[(19)]{KuMaz} follows from \eqref{rel-Bbr.1},
\eqref{rel-Bbr.2} and \eqref{rel-Bbr.3}.

In the terminology of Proposition~\ref{prop4.4.65-1},
we observe that we have  the similar cancellation relations \eqref{rel-Bbr.1} 
and \eqref{rel-Bbr.15} for the ``partial'' element $\tilde{\mathbf{s}}_0$ 
and the ``coloring'' element $\tilde{s}_0$, as well as the 
additional connection between these two elements 
given by the relation \eqref{rel-Bbr.6}
(this relation is, in particular, responsible for the fact that
singletons cannot be colored). Therefore cancellation of 
the factors of $u$ that are conjugates of the ``coloring'' 
element $\tilde{s}_0$ can be done similarly to the arguments
used in \cite[Section~5]{KuMaz} for the cancellation of the
conjugates of the ``partial'' element $\tilde{\mathbf{s}}_0$.
We leave the details to the reader.
\end{proof}

We claim that every $\omega\in Q$ can be written as
$\omega=\omega_l\omega_m\omega_r$, where
$\omega_l$ and $\omega_r$ are defined above
and $\omega_m$ is invertible and such that 
$\tilde{\boldsymbol{\pi}}(\omega_m)$ induces
a map from the right to the left defect of $\omega$.
We will call this a normal form of $\omega$.
We note that $\omega_l$ and $\omega_r$ are 
uniquely defined. At the same time, from
\eqref{rel-br.3} and  \eqref{rel-Bbr.6} it follows
that $\omega_m$ is not uniquely defined in general.
In fact, from (an appropriate analogue of) 
Lemma~\ref{lem-n120} it follows
that, to the very least, it is defined up to
a signed bijection between the elements in the
left and the right defects of $\omega$.

In $\mathbf{Br}_n^B$, it is easy to see that 
each element can be written
in the above normal form where the middle term
is defined exactly up to the signed 
bijection between the elements in the
left and the right defects of  $\omega$.
In particular, the sizes of the left 
and the right defects of $\omega$ coincide.
Another consequence is that $\tilde{\boldsymbol{\pi}}$ induces
a bijection between the elements of $Q$ which 
have a normal form and the elements of $\mathbf{Br}_n^B$.
In other words, to complete the proof, we just need
to show the above claim that each element in $Q$
has a normal form.

We prove the claim by induction on the number of factors
from $T$ in a shortest expression for $\omega\in Q$.
If this shortest expression of $\omega$ has no factors
from $T$, the $\omega$ is invertible and the claim is clear.
This is the basis of our induction.

To prove the induction step, consider a normal form
$\omega_l\omega_m\omega_r$, for some $\omega\in Q$. 
For any invertible $\mathtt{u}$, it is easy to see that
$(\mathtt{u}\omega_l\mathtt{u}^{-1})(\mathtt{u}\omega_m)\omega_r$ 
is a normal form for $\mathtt{u}\omega_l\omega_m\omega_r$.
Therefore, it remains to prove that, for any $\mathtt{t}\in T$, 
the element $\mathtt{t}\omega_l\omega_m\omega_r$ has a normal form.

If the support of $\mathtt{t}$ is disjoint from the left defect of
$\omega$, then, taking into account that $\mathtt{t}$ is idempotent,
we can move one copy of $\mathtt{t}$ past $\omega_l$. Note that,
because of our assumption on $\omega_m$, the support of 
$\omega_m^{-1}\mathtt{t}\omega_m\in T$ is disjoint from the
support of $\omega_r$. Using some of the commutativity relations 
that we already established, it follows that
\begin{displaymath}
(\mathtt{t}\omega_l)\omega_m(\omega_m^{-1}\mathtt{t}\omega_m\omega_r) 
\end{displaymath}
is a normal form for $\mathtt{t}\omega_l\omega_m\omega_r$.

Next, let $\mathtt{t}=\mathtt{t}_i$, for some $i$. Because of the
previous paragraph, we only need to consider the case when $i$
belongs to the left defect of $\omega$. If $\mathtt{t}_i$ is
a factor of $\omega_l$, then $\mathtt{t}\omega_l\omega_m\omega_r=\omega$
due to the idempotency of $\mathtt{t}_i$ and we have nothing to prove.
In the other case, some $\mathtt{t}_{i,j}$ or 
$\overline{\mathtt{t}}_{i,j}$ is a factor of $\omega$.
We consider the former case, with the latter being similar.
Using \eqref{rel-Bbr.5} and the the idempotency of $\mathtt{t}_i$, 
we have $\mathtt{t}_i\mathtt{t}_{i,j}=\mathtt{t}_i\mathtt{t}_j\mathtt{t}_{i,j}$.
Denote by $\omega'_l$ the element obtained from $\omega_l$ by replacing
the factor $\mathtt{t}_{i,j}$ by the factor $\mathtt{t}_i\mathtt{t}_j$.
We claim that $\omega'_l\omega_m\omega_r$ is a normal form for
$\mathtt{t}\omega_l\omega_m\omega_r$. It follows from the definitions that 
$\omega'_l\omega_m\omega_r$ is a normal form, so the only thing to show
is that $\omega'_l\omega_m\omega_r=\mathtt{t}\omega_l\omega_m\omega_r$,
that is, that $\omega'_l\omega_m\omega_r=
\omega'_l(\mathtt{t}_{i,j}\omega_m)\omega_r$.

From \eqref{rel-Bbr.1}, it follows that
\begin{equation}\label{eqn5-1-1}
\mathtt{t}_{s,t}\mathtt{t}_{s}\mathtt{t}_{s,t}=\mathtt{t}_{s,t},
\end{equation}
for all $s\neq t$. Using \eqref{rel-Bbr.2} and \eqref{rel-Bbr.3}, we have
$\tilde{\mathtt{s}}_0\tilde{\mathtt{s}}_1\tilde{\mathtt{s}}_0\tilde{\mathtt{s}}_1
\tilde{\mathtt{s}}_0\tilde{\mathtt{s}}_1\tilde{\mathtt{s}}_0
=\tilde{\mathtt{s}}_0\tilde{\mathtt{s}}_1\tilde{\mathtt{s}}_0$.
This implies
\begin{equation}\label{eqn5-1-1-2}
\mathtt{t}_{s}\mathtt{t}_{s,t}\mathtt{t}_{s}\mathtt{t}_{s,t}\mathtt{t}_{s}=
\mathtt{t}_{s}\mathtt{t}_{s,t}\mathtt{t}_{s},
\end{equation}
for all $s\neq t$. Of course, we also have similar  identities with
$\mathtt{t}_{s,t}$ replaced by $\overline{\mathtt{t}}_{s,t}$.
Using these observations, the fact that $\omega'_l\omega_m\omega_r=
\omega'_l(\mathtt{t}_{i,j}\omega_m)\omega_r$ follows from (an appropriate
analogue) of Lemma~\ref{lem-n120}.

Next consider the case $\mathtt{t}=\mathtt{t}_{i,j}$ 
with $\{i,j\}$ inside the left defect of $\omega$
(for $\mathtt{t}=\overline{\mathtt{t}}_{i,j}$, the arguments are similar).
If both, $\mathtt{t}_i$ and $\mathtt{t}_j$ are factors of 
$\omega_l$, then one can use the same argument as in the previous case.
The remaining case is when either $\mathtt{t}_{i,s}$ or
$\mathtt{t}_{j,s}$ or $\overline{\mathtt{t}}_{i,s}$
or $\overline{\mathtt{t}}_{j,s}$ is a factor of $\omega_l$,
for some $s\neq i,j$.
In this case, using \eqref{rel-br.5}, we reduce the situation to the
case of multiplication of $\omega$ with an invertible element considered above.

Finally, consider the case $\mathtt{t}=\mathtt{t}_{i,j}$ 
with only $i$ belonging to the left defect of $\omega$
(again, for $\mathtt{t}=\overline{\mathtt{t}}_{i,j}$, the arguments are similar).
The case when either $\mathtt{t}_{i,s}$ or $\overline{\mathtt{t}}_{i,s}$
is a factor of $\omega_l$ is dealt with as in the previous paragraph.
It remains to consider the case when $\mathtt{t}_i$ is a factor of
$\omega_l$. Define $\omega'_l$ as $\omega_l$ with $\mathtt{t}_i$
replaced by $\mathtt{t}_{i,j}$.

Assume that $\tilde{\boldsymbol{\pi}}(\omega_m)$ matches
$j$ with some $k$. Define $\omega'_r$ as the product 
$\omega_r\mathtt{t}_k$. We claim that $\omega'_l\omega_m\omega'_r$
is a normal form for $\mathtt{t}_{i,j}\omega$. It follows directly from
the construction that $\omega'_l\omega_m\omega'_r$
is a normal form. It remains to note that 
$\omega'_l\omega_m\omega'_r=\mathtt{t}_{i,j}\omega$
follows from the definitions of $\omega'_l$ and $\omega'_m$
and (an appropriate analogue) of Lemma~\ref{lem-n120}.
\end{proof}

If $\Bbbk$ is a commutative ring and $\delta\in\Bbbk$ is invertible, then
the above implies a presentation for $\mathbf{Br}_n^B(\Bbbk,\delta)$ by
correcting $\mathtt{s}_i^2=\mathtt{s}_i$ to $\mathtt{s}_0^2=\delta\mathtt{s}_0$
and $\mathtt{s}_i^2=\delta^2\mathtt{s}_i$, for $i>0$.

\section{Recap of category $\mathcal{O}$}\label{s5}

\subsection{Setup and definitions}\label{s5.1}

From now on we work over the field $\mathbb{C}$ of complex numbers.
We use \cite{Hu1,Hu2} as standard references.

Let $\mathfrak{g}$ be a finite dimensional, semi-simple complex Lie algebra
with a fixed triangular decomposition
\begin{equation}\label{eq6.1.1}
\mathfrak{g}=\mathfrak{n}_-\oplus \mathfrak{h}\oplus \mathfrak{n}_+. 
\end{equation}
Here $\mathfrak{h}$ is a Cartan subalgebra and $\mathfrak{n}_{\pm}$ are the positive
and the negative parts, respectively. We denote by $\mathbf{R}\subset \mathfrak{h}^*$ 
the root system of the pair $(\mathfrak{g},\mathfrak{h})$ with the induced 
decomposition $\mathbf{R}=\mathbf{R}_-\cup \mathbf{R}_+$ into negative and positive roots.
Let $W$ be the Weyl group of $\mathbf{R}$ and $\mathsf{S}$ the set of simple reflections
associated to our choice of positive roots. Then $(W,\mathsf{S})$ is a Coxeter system.
We denote by $w_0$ the longest element of $W$.

Let $\mathcal{O}$ be the Bernstein-Gelfand-Gelfand (BGG) category associated with the
triangular decomposition in \eqref{eq6.1.1}, see \cite{BGG1,BGG2}. For $\lambda\in\mathfrak{h}^*$,
we denote by
\begin{itemize}
\item $\Delta(\lambda)$ the Verma module with highest weight $\lambda$,
\item $L(\lambda)$ the unique simple quotient  of $\Delta(\lambda)$,
\item $P(\lambda)$ the indecomposable projective cover of  $L(\lambda)$ in $\mathcal{O}$,
\item $I(\lambda)$ the indecomposable injective envelope of  $L(\lambda)$ in $\mathcal{O}$,
\item $T(\lambda)$ the indecomposable tilting envelope of  $\Delta(\lambda)$ in $\mathcal{O}$.
\end{itemize}

Let $\mathcal{O}_0$ be the  principal block of $\mathcal{O}$, that is the indecomposable 
direct summand containing the trivial $\mathfrak{g}$-module $L(0)$. Let $\rho$  be the half 
of the sum of all  positive roots. The dot-action of $W$ on $\mathfrak{h}^*$ is defined
via $w\cdot\lambda:=w(\lambda+\rho)-\rho$. The simple objects in $\mathcal{O}_0$ are in bijection
with the elements in $W$ via $w\mapsto L_w:=L(w\cdot 0)$. We use similar notation for other
structural modules in $\mathcal{O}_0$, for example $\Delta_w:=\Delta(w\cdot 0)$,
$P_w:=P(w\cdot 0)$ etc.

Denote by $A$ a basic algebra of $\mathcal{O}_0$, that is the (opposite of the) 
endomorphism algebra of  a multiplicity-free projective generator of $\mathcal{O}_0$.
Then we have the usual equivalence $\mathcal{O}_0\cong A$-mod.

Denote by $\star$ the usual simple preserving duality on $\mathcal{O}$. For any
$w\in W$, we then have $L_w^\star\cong L_w$, $P_w^\star\cong I_w$ and $T_w^\star\cong T_w$.
We also have the dual Verma modules  $\nabla_w:=\Delta_w^\star$.

\subsection{Graded lift of $\mathcal{O}_0$}\label{s5.2}

Denote by $\mathtt{C}$ the coinvariant algebra of the $W$-module $\mathfrak{h}^*$.
It is defined as the quotient of the $\mathbb{Z}$-graded polynomial algebra of $\mathfrak{h}$,
under the convention $\deg \mathfrak{h}=2$, modulo the ideal generated by 
all homogeneous $W$-invariant polynomials of positive degree. Directly from the 
definition, we have that $\mathtt{C}$ is naturally $\mathbb{Z}$-graded.
In what follows, by {\em graded} we mean $\mathbb{Z}$-graded.

Soergel's combinatorial description of $\mathcal{O}_0$, see \cite{So}, asserts,
among other things, the following:
\begin{itemize}
\item $\mathrm{End}_{\mathcal{O}}(P_{w_0})\cong\mathtt{C}$;
\item the functor $\mathrm{Hom}_{\mathcal{O}}(P_{w_0},{}_-):\mathcal{O}_0\to
\mathrm{mod}\text{-}\mathtt{C}$ is full and faithful on projectives and,
for $w\in W$, the $\mathtt{C}$-module $\mathrm{Hom}_{\mathcal{O}}(P_{w_0},P_w)$
is naturally graded.
\end{itemize}
This gives the algebra $A$ the natural structure of a graded algebra
\begin{displaymath}
A=\bigoplus_{i\in\mathbb{Z}}A_i. 
\end{displaymath}
Moreover, we have $A_i=0$ for $i<0$ and $A_0$ is semisimple. In other words, 
$A$ is positively graded, see \cite{So,St} for details.

Denote by $\mathcal{O}_0^\mathbb{Z}$ the category of finite dimensional 
graded $A$-modules (where morphisms are homogeneous maps of degree zero).
There is the obvious functor from $\mathcal{O}_0^\mathbb{Z}$ to
$\mathcal{O}_0$ which forgets the grading.

We denote by $\langle 1\rangle$ the endofunctor of $\mathcal{O}_0^\mathbb{Z}$
which shifts the grading, under the normalization that it sends degree 
$0$ to degree $-1$. We fix the standard graded lifts of all structural modules:
\begin{itemize}
\item $L_w$ is concentrated in degree $0$;
\item $\Delta_w$ with the top concentrated in degree $0$;
\item $P_w$ with the top concentrated in degree $0$;
\item $\nabla_w$ with the socle concentrated in degree $0$;
\item $I_w$ with the socle concentrated in degree $0$;
\item $T_w$ such that $\Delta_w\hookrightarrow T_w$ has degree  $0$.
\end{itemize}
The duality $\star$ admits a graded lift, denoted by the same symbol.
It has the usual property that $(M^\star)_{-i}\cong \mathrm{Hom}_\mathbb{C}(M_i,\mathbb{C})$,
for $\displaystyle  M\cong \bigoplus_{i\in\mathbb{Z}}M_i\in \mathcal{O}_0^\mathbb{Z}$.

\subsection{Combinatorics of $\mathcal{O}_0$}\label{s5.3}

Consider the Grothendieck group $[\mathcal{O}_0^\mathbb{Z}]$ of $\mathcal{O}_0^\mathbb{Z}$
and the Hecke algebra $\mathbf{H}$ of the Coxeter system $(W,S)$.
There exists a unique isomorphism of abelian groups as follows:
\begin{displaymath}
\Xi:[\mathcal{O}_0^\mathbb{Z}]\to \mathbf{H}
\text{ such that }\Xi([\Delta_w])=H_w, \text{ for } w\in W, 
\text{ and }\Xi\circ \langle 1\rangle=v^{-1}\Xi.
\end{displaymath}
As a consequence  of the Kazhdan-Lusztig conjecture
(see \cite{KL}, proved in \cite{BB,BK}), we have 
$\Xi([P_w])=\underline{H}_w$,  for $w\in W$.

\subsection{Endofunctors of $\mathcal{O}_0$}\label{s5.4}

For $w\in W$, denote by $\theta_w$ the projective endofunctor on
$\mathcal{O}_0$ or $\mathcal{O}_0^\mathbb{Z}$ such that $\theta_w\ P_e=P_w$,
see \cite{BG}. These are exactly the indecomposable objects (up to shift of grading) 
in the corresponding monoidal categories $\cP$ and $\cP^\mathbb{Z}$ of projective 
endofunctors on $\mathcal{O}_0$ or $\mathcal{O}_0^\mathbb{Z}$, respectively.
The split Grothendieck ring $\left[\cP^\mathbb{Z}\right]_{\oplus}$
is isomorphic to $\mathbf{H}$ under $[\theta_w]\mapsto \underline{H}_w$, for $w\in W$,
with the usual convention that $\langle 1\rangle$ corresponds to $v^{-1}$.
Homomorphisms between projective functors are described in \cite{Bac01}.
Soergel's combinatorial description of $\mathcal{O}_0$ implies that 
$\cP(\theta_e,\theta_e)\cong\mathtt{C}$. Furthermore, each $\cP(\theta_x,\theta_y)$ is a 
free $\mathtt{C}$-module as a basis of which one can take an arbitrary lift of any basis
in $\mathrm{Hom}_{\mathcal{O}}(P_x,P_y)$. Consequently, the endomorphism algebra $B$
of $\displaystyle \bigoplus_{w\in W}\theta_w$ is positively graded.
More generally, Soergel's combinatorial description of $\mathcal{O}_0$
describes $\cP$ via the biequivalent monoidal category of
{\em Soergel bimodules} for $(W,S)$. The latter is the minimal 
isomorphism closed monoidal subcategory of $\mathtt{C}\text{-mod-}\mathtt{C}$
containing the bimodules $\mathtt{C}\otimes_{\mathtt{C}^s}\mathtt{C}$, where,
for a simple reflection $s$, we denote by $\mathtt{C}^s$ the algebra of all
$s$-invariants in $\mathtt{C}$. This biequivalence sends 
$\theta_s$ to $\mathtt{C}\otimes_{\mathtt{C}^s}\mathtt{C}$, for $s\in S$.

For $s\in S$, we  have the adjunction morphisms
$\theta_e\langle -1\rangle\overset{\mathrm{adj}_s}{\longrightarrow}\theta_s$
and $\theta_s\overset{\mathrm{adj}^s}{\longrightarrow}\theta_e\langle 1\rangle$.
The cokernel of $\mathrm{adj}_s$ is called the shuffling functor and denoted
$\mathsf{C}_s$. The kernel of $\mathrm{adj}^s$ is called the coshuffling 
functor and denoted $\mathsf{K}_s$. 

The functor $\mathsf{C}_s$ is right exact and the derived functor 
$\mathcal{L}\mathsf{C}_s$ is given by tensoring with the complex
$0\to \theta_e\langle -1\rangle\overset{\mathrm{adj}_s}{\longrightarrow}\theta_s\to 0$,
where $\theta_s$ is in the homological position $0$, followed by taking the
total complex. 

Similarly, the functor $\mathsf{K}_s$ is left exact and the derived functor 
$\mathcal{R}\mathsf{K}_s$ is given by tensoring with the complex
$0\to \theta_s\overset{\mathrm{adj}^s}{\longrightarrow}\theta_e\langle 1\rangle\to 0$,
where $\theta_s$ is in the homological position $0$, followed by taking the
total complex. The functors $\mathcal{L}\mathsf{C}_s$ and $\mathcal{R}\mathsf{K}_s$
are mutually inverse equivalences. We refer to \cite{Ca,MaStr} for details. 

For $w\in W$ and $s\in S$ such that $ws>w$, we have the exact sequences
\begin{displaymath}
\Delta_{w}\langle -1\rangle
\overset{\mathrm{adj}_s(\Delta_w)}{\hookrightarrow} \theta_s \Delta_{w}\tto \Delta_{ws}\quad
\text{ and }\quad
\Delta_{w}\hookrightarrow \theta_s \Delta_{ws}\overset{\mathrm{adj}^s(\Delta_{ws})}{\tto} 
\Delta_{ws}\langle 1\rangle.
\end{displaymath}
In particular,  we have
\begin{equation}\label{eq5.4.0}
\mathsf{C}_s \Delta_{w}\cong \Delta_{ws}\quad
\text{ and }\quad
\mathsf{K}_s \Delta_{ws}\cong \Delta_{w}.
\end{equation}

\begin{corollary}\label{cor5.4.1}
Let $w\in W$ and $w=s_1s_2\dots s_k$ be a reduced expression. 
Then we have an isomorphism
$\mathsf{C}_{s_k}\dots \mathsf{C}_{s_2}\mathsf{C}_{s_1}\Delta_e\cong \Delta_w$.
\end{corollary}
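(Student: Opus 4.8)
The plan is to prove the statement by a straightforward induction on $k$, using the first isomorphism in \eqref{eq5.4.0} as the inductive engine. First I would set up notation for the partial products: for $0\le j\le k$ write $w_j:=s_1s_2\dots s_j$, with the convention $w_0=e$. The key combinatorial input is the standard fact from Coxeter theory that every prefix of a reduced expression is again reduced, so $\ell(w_j)=j$ for all $j$; in particular $w_{j-1}s_j=w_j$ and $\ell(w_j)=\ell(w_{j-1})+1$, whence $w_{j-1}s_j>w_{j-1}$. This last inequality is exactly the hypothesis $ws>w$ required to apply \eqref{eq5.4.0} with $w=w_{j-1}$ and $s=s_j$.

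The base case $j=0$ is trivial, since the empty composite applied to $\Delta_e$ is $\Delta_e=\Delta_{w_0}$. For the inductive step, assume $\mathsf{C}_{s_{j-1}}\dots\mathsf{C}_{s_1}\Delta_e\cong\Delta_{w_{j-1}}$. Applying $\mathsf{C}_{s_j}$ to both sides and invoking \eqref{eq5.4.0} (legitimate because $w_{j-1}s_j>w_{j-1}$), I obtain
\begin{displaymath}
\mathsf{C}_{s_j}\mathsf{C}_{s_{j-1}}\dots\mathsf{C}_{s_1}\Delta_e
\cong\mathsf{C}_{s_j}\Delta_{w_{j-1}}
\cong\Delta_{w_{j-1}s_j}
=\Delta_{w_j}.
\end{displaymath}
Taking $j=k$ then gives $\mathsf{C}_{s_k}\dots\mathsf{C}_{s_1}\Delta_e\cong\Delta_{w_k}=\Delta_w$, which is the desired isomorphism.

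Since \eqref{eq5.4.0} is already available (it is read off from the two displayed short exact sequences involving $\theta_s$), there is essentially no obstacle here; the only point demanding any attention is to confirm that the prefix condition $w_{j-1}s_j>w_{j-1}$ holds at each stage, and this is precisely the reducedness of prefixes. One clarifying remark worth making explicit is that \eqref{eq5.4.0} concerns the (non-derived) shuffling functor $\mathsf{C}_s$ applied to a Verma module, whose output is again a Verma module; hence the whole composite in the statement stays within the Verma modules and no complexes or derived functors enter the argument, so the induction proceeds cleanly step by step.
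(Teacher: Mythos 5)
Your proof is correct and is exactly the paper's argument — the paper's proof is the single line ``Apply \eqref{eq5.4.0} inductively along the reduced expression,'' and you have simply spelled out the induction, including the (correct and worth noting) check that prefixes of a reduced expression are reduced so that the hypothesis $w_{j-1}s_j>w_{j-1}$ of \eqref{eq5.4.0} holds at each step.
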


\begin{proof}
Apply \eqref{eq5.4.0} inductively along the reduced expression.
\end{proof}

\subsection{Homotopy category of complexes of projective functors}\label{s5.5}

Consider the homotopy category $\mathcal{K}^b(\cP^\mathbb{Z})$ of complexes of
projective functors. This category has the natural structure of a tensor 
triangulated category since it is the homotopy  category of an additive 
finitary monoidal category. The category $\mathcal{K}^b(\cP^\mathbb{Z})$ acts naturally 
on $\mathcal{D}^b(\mathcal{O}_0^\mathbb{Z})$ via the component-wise action 
followed by taking the total complex. The following elementary observation will 
be very handy:

\begin{lemma}\label{lem5.5.1}
Let $\mathsf{F}_\bullet\in \mathcal{K}^b(\cP^\mathbb{Z})$ be minimal (in the sense that
it does not have non-trivial null-homotopic direct summands). Then
$\mathsf{F}_\bullet\Delta_e\in \mathcal{K}^b(\mathrm{proj}(\mathcal{O}_0^\mathbb{Z}))$
is also minimal.
\end{lemma}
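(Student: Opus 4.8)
The plan is to reduce the statement to the standard description of minimality in a Krull--Schmidt additive category and to exploit that evaluation at $\Delta_e$ matches up the indecomposable objects of $\cP^\mathbb{Z}$ with the indecomposable graded projectives of $\mathcal{O}_0^\mathbb{Z}$. First I would record the book-keeping. The dominant Verma module $\Delta_e=\Delta(0)$ is projective, so $\Delta_e=P_e$, and hence $\theta_w\Delta_e=\theta_wP_e=P_w$ for every $w\in W$; more generally the evaluation functor $\mathcal{E}:=({}_-)\Delta_e\colon\cP^\mathbb{Z}\to\mathcal{O}_0^\mathbb{Z}$ sends the indecomposable object $\theta_w\langle a\rangle$ to the indecomposable graded projective $P_w\langle a\rangle$. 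Since the modules $P_w\langle a\rangle$, for $(w,a)\in W\times\mathbb{Z}$, are pairwise non-isomorphic indecomposables, $\mathcal{E}$ induces an injection on isomorphism classes of indecomposable objects. Both $\cP^\mathbb{Z}$ and $\mathrm{proj}(\mathcal{O}_0^\mathbb{Z})$ are Krull--Schmidt, so every term of $\mathsf{F}_\bullet$ and of $\mathsf{F}_\bullet\Delta_e$ decomposes into such indecomposables and $\mathcal{E}$ carries the summand decomposition of a term of $\mathsf{F}_\bullet$ to that of the corresponding term of $\mathsf{F}_\bullet\Delta_e$.

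Next I would invoke the standard characterization: a bounded complex over a Krull--Schmidt additive category is minimal if and only if, after decomposing each term into indecomposable summands, every matrix entry of every differential is a radical morphism between indecomposables (otherwise a single Gaussian-elimination step splits off a contractible two-term summand). The one input I need about the categories is that $\mathcal{E}$ reflects isomorphisms between the relevant summands: if $f\colon\theta_x\langle a\rangle\to\theta_y\langle b\rangle$ is a degree-$0$ morphism with $\mathcal{E}(f)$ invertible, then $P_x\langle a\rangle\cong P_y\langle b\rangle$ forces $(x,a)=(y,b)$, so $f$ is a degree-$0$ endomorphism of $\theta_x\langle a\rangle$. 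Here $\End_{\cP^\mathbb{Z}}(\theta_x\langle a\rangle)$ is local (its degree-zero part being a quotient of the local endomorphism ring by the positive-degree ideal), $\End_{\mathcal{O}_0^\mathbb{Z}}(P_x\langle a\rangle)_0\cong\mathbb{C}$, and the unital algebra homomorphism induced by $\mathcal{E}$ is therefore the residue-field projection; consequently $f$ is invertible whenever $\mathcal{E}(f)$ is.

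Finally I would assemble the argument. Let $d$ be a differential of the minimal complex $\mathsf{F}_\bullet$ and let $f\colon\theta_x\langle a\rangle\to\theta_y\langle b\rangle$ be one of its matrix entries, so that $\mathcal{E}(f)\colon P_x\langle a\rangle\to P_y\langle b\rangle$ is the corresponding entry of the differential of $\mathsf{F}_\bullet\Delta_e$. If $\mathcal{E}(f)$ were an isomorphism, the previous paragraph would give that $f$ is an isomorphism, contradicting the minimality of $\mathsf{F}_\bullet$. Hence no entry of any differential of $\mathsf{F}_\bullet\Delta_e$ is an isomorphism, so all entries are radical and $\mathsf{F}_\bullet\Delta_e$ is minimal. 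I expect the only genuinely delicate point to be this reflection-of-isomorphisms step: one must note that although $\mathcal{E}$ is \emph{not} faithful (it kills the positive-degree part of $\End_{\cP^\mathbb{Z}}(\theta_e)\cong\mathtt{C}$), this non-faithfulness is harmless, because the only entries whose radicality is not automatic from the injectivity of $\mathcal{E}$ on isomorphism classes are those between isomorphic summands, and on those $\mathcal{E}$ is precisely the residue-field projection of a local ring and thus detects invertibility.
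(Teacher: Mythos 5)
Your proof is correct and follows essentially the same route as the paper's: both reduce minimality to the statement that no matrix entry of a differential between indecomposable summands evaluates to an isomorphism, and both settle the only nontrivial case (an entry $\theta_x\langle a\rangle\to\theta_x\langle a\rangle$) by observing that positivity of the grading forces the degree-zero endomorphism ring of an indecomposable projective functor to be $\mathbb{C}$, so that evaluation at $\Delta_e$ reflects invertibility there. Your write-up merely makes explicit the Krull--Schmidt/radical-entry formulation and the non-faithfulness caveat that the paper leaves implicit.
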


\begin{proof}
A complex of projective modules is not minimal provided that it contains a
(null-homotopic) direct summand of the form $0\to P\overset{\cong}{\longrightarrow} P\to 0$,
for some indecomposable $P$.

Let $\theta_w\langle i\rangle\overset{\alpha}{\longrightarrow} \theta_w\langle i\rangle$
be a piece of $\mathsf{F}_\bullet$ which, after the evaluation at $\Delta_e$, produces
a  null-homotopic direct summand 
$P_w\langle i\rangle  \overset{\alpha(\Delta_e),\cong}{\longrightarrow} P_w\langle i\rangle$
of $\mathsf{F}_\bullet\Delta_e$. 
Then $\alpha$ must be non-zero and thus an isomorphism due to the positivity of 
the grading on $B$. This contradicts our assumption that $\mathsf{F}_\bullet$ is minimal.
\end{proof}

\section{Original categorification of singular braid monoid using 
projective functors}\label{s6}

\subsection{Braid group action}\label{s6.1}

Let $G$ be a (semi)group and $\mathcal{C}$ a category. Recall that a {\em weak action}
of $G$  on $\mathcal{C}$ is an assignment of an endofunctor $F_g:\mathcal{C}\to\mathcal{C}$, 
to each $g\in G$, such that $F_gF_h\cong F_{gh}$, for any $g,h\in G$. The following
statement is well-known (see \cite{MaStr,MaStr2}). However, there does not seem to be an available
proof in the literature in full generality, so we give a proof that appears to be based on a new
approach that we will also extensively use later on.

\begin{proposition}\label{prop6.1.1}
The assignment $\sigma_s\mapsto \mathcal{L}\mathsf{C}_s$ and
$\sigma_s^{-1}\mapsto \mathcal{R}\mathsf{K}_s$, for $s\in S$, extends uniquely
(up to isomorphism) to a weak action of the Artin group $\mathbf{B}(W)$ on 
$\mathcal{D}^b(\mathcal{O}_0^\mathbb{Z})$.
\end{proposition}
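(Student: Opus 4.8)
The plan is to note first that, as recalled in Subsection~\ref{s5.4}, the functors $\mathcal{L}\mathsf{C}_s$ and $\mathcal{R}\mathsf{K}_s$ are mutually inverse auto-equivalences of $\mathcal{D}^b(\mathcal{O}_0^\mathbb{Z})$, so the relation $\sigma_s\sigma_s^{-1}=e$ is automatic and the only thing left to verify is that the $\mathcal{L}\mathsf{C}_s$ satisfy the braid relations $\underbrace{\mathcal{L}\mathsf{C}_s\mathcal{L}\mathsf{C}_t\cdots}_{m_{s,t}}\cong\underbrace{\mathcal{L}\mathsf{C}_t\mathcal{L}\mathsf{C}_s\cdots}_{m_{s,t}}$ for each pair $s\neq t$ with $m_{s,t}<\infty$; uniqueness up to isomorphism is then clear, since a weak action is determined by the images of the generators once the relations hold. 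I would realise both sides as objects $L,R$ of the tensor triangulated category $\mathcal{K}^b(\cP^\mathbb{Z})$, each being the composite, in the two alternating orders, of the two-term complexes $[\theta_e\langle -1\rangle\xrightarrow{\mathrm{adj}_s}\theta_s]$ and $[\theta_e\langle -1\rangle\xrightarrow{\mathrm{adj}_t}\theta_t]$, and then pass to their minimal representatives $\tilde L,\tilde R$, which exist and are unique up to isomorphism because $\cP^\mathbb{Z}$ is Krull--Schmidt.

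Next I would evaluate at the dominant Verma $\Delta_e$, which is projective (indeed $\Delta_e=P_e$) and satisfies $\theta_w\Delta_e=P_w$, so that $(-)\Delta_e$ carries $\mathcal{K}^b(\cP^\mathbb{Z})$ into $\mathcal{K}^b(\mathrm{proj}\,\mathcal{O}_0^\mathbb{Z})$. Writing $w=\underbrace{sts\cdots}_{m_{s,t}}$ for the longest element of $\langle s,t\rangle$, both alternating words are reduced expressions for $w$; since each instance of \eqref{eq5.4.0} is a genuine short exact sequence, derived and underived shuffling agree along these words, and Corollary~\ref{cor5.4.1} gives $L\Delta_e\cong\Delta_w\cong R\Delta_e$ in $\mathcal{D}^b(\mathcal{O}_0^\mathbb{Z})$. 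By Lemma~\ref{lem5.5.1} the complexes $\tilde L\Delta_e$ and $\tilde R\Delta_e$ are minimal; being minimal complexes of projectives homotopy equivalent to the module $\Delta_w$, each is the minimal projective resolution of $\Delta_w$, and minimal resolutions are unique, so $\tilde L\Delta_e\cong\tilde R\Delta_e$ as complexes. Since $\theta_v\mapsto P_v$ is a bijection on indecomposables, this already forces $\tilde L$ and $\tilde R$ to have the same graded projective functors in each homological degree.

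The main obstacle is to upgrade this isomorphism of evaluated complexes to an isomorphism $\tilde L\cong\tilde R$ of complexes of functors, for here $(-)\Delta_e$ is not directly faithful: by Soergel's description each $\cP(\theta_x,\theta_y)$ is a free $\mathtt{C}$-module, so evaluation kills the $\mathtt{C}_{>0}$-part of every Hom-space and could a priori collapse distinct differentials. The point I would exploit is that $\mathtt{C}$ is concentrated in even degrees with $\mathtt{C}_0=\mathbb{C}$, combined with the Koszulity of $\mathcal{O}_0$ (linearity of the projective resolutions of Verma modules): the minimal resolution of $\Delta_w$ is linear, so all indecomposable summands of $\tilde L\Delta_e$ in a fixed homological degree carry one and the same grading shift. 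Consequently every component of a differential of $\tilde L$ is a degree-$1$ map $\theta_x\langle k\rangle\to\theta_y\langle k-1\rangle$ and every component of a candidate chain isomorphism is a degree-$0$ map; and on these parts $(-)\Delta_e$ restricts to isomorphisms $\cP(\theta_x,\theta_y)_0\cong\Hom(P_x,P_y)_0$ and $\cP(\theta_x,\theta_y)_1\cong\Hom(P_x,P_y)_1$, because the higher $\mathtt{C}_{2i}$ can only pair with $\Hom$ in negative degree.

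Finally I would assemble the lift: the isomorphism $\tilde L\Delta_e\xrightarrow{\sim}\tilde R\Delta_e$ has degree-$0$ components, which lift uniquely to isomorphisms $\tilde L^{-k}\to\tilde R^{-k}$; the failure of these lifts to commute with the differentials is a degree-$1$ map whose image under $(-)\Delta_e$ vanishes, hence is itself zero by injectivity of $(-)\Delta_e$ in degree $1$. Thus the lifts constitute a chain isomorphism $\tilde L\cong\tilde R$, which is the desired braid relation. I expect the only genuinely delicate point to be this reflection-of-isomorphisms argument; everything else is bookkeeping, and the computation is in any case local to the rank-two parabolic $\langle s,t\rangle$, since composing the complexes $[\theta_e\langle -1\rangle\to\theta_s]$ only produces functors $\theta_v$ with $v\in\langle s,t\rangle$.
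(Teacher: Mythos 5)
Your argument is correct and follows essentially the same route as the paper: evaluate at $\Delta_e$, invoke Lemma~\ref{lem5.5.1} to identify the evaluation of a minimal representative with the (linear, explicitly known) minimal projective resolution of $\Delta_w$ for the rank-two parabolic generated by $s$ and $t$, and use Backelin's description of homomorphisms between projective functors to transport the uniqueness back to $\mathcal{K}^b(\cP^\mathbb{Z})$. Your final step --- lifting the chain isomorphism degreewise and killing the commutator defect by injectivity of evaluation in internal degree one --- is a more explicit rendering of the paper's closing observation that the one-dimensionality (resp.\ vanishing) of the relevant morphism spaces between neighbouring summands pins down $\mathsf{G}_\bullet$ uniquely up to isomorphism.
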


\begin{proof}
We need to check  that every defining relation for  $\mathbf{B}(W)$ gives rise to an isomorphism
of functors between the two sides of the relation. The relations
$\sigma_s\sigma_s^{-1}=\sigma_s^{-1}\sigma_s=e$ are checked in \cite[Theorem~5.9]{MaStr}, 
this means exactly that the functors $\mathcal{L}\mathsf{C}_s$ and $\mathcal{R}\mathsf{K}_s$
are mutually inverse equivalences.
Hence we only need to check the braid relations.

Let $s\neq t\in S$ and $w=\underbrace{sts\dots}_{m_{s,t}\text{ factors}}$.
Consider the functor
\begin{displaymath}
\mathsf{F}=\underbrace{ \dots 
\mathcal{L}\mathsf{C}_s\mathcal{L}\mathsf{C}_t\mathcal{L}\mathsf{C}_s
}_{m_{s,t}\text{ factors}}.
\end{displaymath}
Evaluating $\mathsf{F}$ at $\Delta_e$ and noting that all shuffling functors are
acyclic on Verma modules, we obtain $\mathsf{F}\Delta_e\cong \Delta_w$ using Corollary~\ref{cor5.4.1}.

Let $\mathsf{G}_\bullet\in \mathcal{K}^b(\cP^\mathbb{Z})$ be a  minimal complex representing 
$\mathsf{F}$. Then, by Lemma~\ref{lem5.5.1}, the evaluation $\mathsf{G}_\bullet\Delta_e$
gives a minimal projective resolution of $\Delta_w$. To complete the proof, we need to show
that this property describes $\mathsf{G}_\bullet$ uniquely, up to isomorphism.

For $i>0$, denote by $\mathbf{s}_i$ the word $stst\dots$ of length $i$
and by $\mathbf{t}_i$ the word $tsts\dots$ of length $i$.
Since the parabolic subgroup of $W$ generated by $s$ and $t$ has rank two,
all KL-polynomials for this parabolic subgroup
are trivial and hence the minimal projective resolution of $\Delta_w$
is easy to describe. The Bruhat interval $[e,w]$ in $W$ has the following form:
\begin{displaymath}
\xymatrix@C=4mm@R=4mm{
&w\ar@{-}[dr]\ar@{-}[dl]&\\
\mathbf{s}_{\ell(w)-1}\ar@{-}[d]\ar@{-}[drr]&&\mathbf{t}_{\ell(w)-1}\ar@{-}[d]\ar@{-}[dll]\\
\vdots\ar@{-}[d]\ar@{-}[drr]&&\vdots\ar@{-}[d]\ar@{-}[dll]\\
st\ar@{-}[d]\ar@{-}[drr]&&ts\ar@{-}[d]\ar@{-}[dll]\\
s\ar@{-}[dr]&&t\ar@{-}[dl]\\
&e&\\
}
\end{displaymath}
A minimal projective resolution $\mathcal{P}^\bullet_w$ of $\Delta_w$ then has the following form:

\resizebox{\textwidth}{!}{
$
0\to P_e\langle-\ell(w)\rangle \to
P_s\langle-\ell(w)+1\rangle\oplus P_t\langle-\ell(w)+1\rangle \to\dots 
\to
P_{\mathbf{s}_{\ell(w)-1}}\langle-1\rangle
\oplus P_{\mathbf{t}_{\ell(w)-1}}\langle-1\rangle 
\to P_w\to 0. 
$
}

For any two summands $P_x\langle -i\rangle$ and $P_y\langle -i+1\rangle$   in the 
neighboring positions of this resolution, there is a unique, up to scalar, map
from $P_x\langle -i\rangle$ to $P_y\langle -i+1\rangle$. Since,  in this case, $x\neq y$,
there are no non-zero homomorphisms from $P_x\langle -i\rangle$ to $P_y\langle -i\rangle$. 
Using Backelin's description of homomorphisms between projective functors, 
this implies that there is a unique, up to scalar, map
from $\theta_x\langle -i\rangle$ to $\theta_y\langle -i+1\rangle$ as well.
Similarly, for any two summands $P_x\langle -i\rangle$ and $P_y\langle -i+2\rangle$ 
in the positions of this resolution with one gap, there is a unique, up to scalar, map
from $P_x\langle -i\rangle$ to $P_y\langle -i+2\rangle$ and also 
from $\theta_x\langle -i\rangle$ to $\theta_y\langle -i+2\rangle$.

Finally, note that the evaluation map gives rise to a morphism of complexes
$\mathsf{G}_\bullet\to \mathcal{P}^\bullet_w$. If we now fix $\mathcal{P}^\bullet_w$
and look at $\mathsf{G}_\bullet\to \mathcal{P}^\bullet_w$, the properties described in previous 
paragraph identify $\mathsf{G}_\bullet$ uniquely, up to isomorphism. This completes the proof.
\end{proof}

\begin{remark}\label{rem6.1.4}
{\rm
It is interesting to see how the argument used in the above proof relates to
na{\"\i}ve computations of compositions of derived shuffling functors.
For $w=sts$, the na{\"\i}ve composition followed by taking the total complex
evaluates the composition $\mathcal{L}\mathsf{C}_s\mathcal{L}\mathsf{C}_t\mathcal{L}\mathsf{C}_s$ to 

\resizebox{\textwidth}{!}{
$
0\to \theta_e\langle -3\rangle\to
\theta_s\langle -2\rangle\oplus \theta_t\langle -2\rangle\oplus\theta_s\langle -2\rangle
\to
\theta_s\theta_t\langle -1\rangle\oplus 
\theta_s\theta_s\langle -1\rangle\oplus\theta_t\theta_s\langle -1\rangle
\to
\theta_s\theta_t\theta_s\to 0
$
}

Further, the computation of all individual compositions gives:

\resizebox{\textwidth}{!}{
$
0\to \theta_e\langle -3\rangle\to
\theta_s\langle -2\rangle\oplus \theta_t\langle -2\rangle\oplus{\color{violet}\theta_s\langle -2\rangle}
\to
\theta_{ts}\langle -1\rangle\oplus 
{\color{magenta}\theta_s\langle -2\rangle\oplus\theta_s} \oplus\theta_{st}\langle -1\rangle
\to
\theta_{sts}\oplus {\color{violet}\theta_s}\to 0
$
}

Since the projective resolution of $\Delta_w$ is linear (see, e.g., \cite[Corollary~2.8]{Ma}), 
the indecomposables appearing 
at homological position $i$ of a minimal representative of the above complex must be shifted 
by $i$. In other words, the summands ${\color{magenta}\theta_s\langle -2\rangle\oplus\theta_s}$
must belong to null-homotopic summands and should be removed. This removal forces the removal of
the summands ${\color{violet}\theta_s}$ and ${\color{violet}\theta_s\langle -2\rangle}$ resulting
in the minimal complex
\begin{displaymath}
0\to \theta_e\langle -3\rangle\to
\theta_s\langle -2\rangle\oplus \theta_t\langle -2\rangle
\to
\theta_{ts}\langle -1\rangle\oplus 
\theta_{st}\langle -1\rangle
\to
\theta_{sts}\to 0
\end{displaymath}
which is exactly the one that appears in the proof above.
} 
\end{remark}

\subsection{Singular crosses via projective functors}\label{s6.2}

In type $A$, the following result can be found in \cite[Theorem~1]{MaStr2} and
\cite[Theorem~5.1]{CM}. We give a general proof.

\begin{theorem}\label{thm6.2.1}
The assignment $\sigma_s\mapsto \mathcal{L}\mathsf{C}_s$ and
$\tau_s\mapsto \theta_s$, for $s\in S$, extends uniquely
(up to isomorphism) to a weak action of the singular Artin monoid 
$\mathbf{SB}(W)$ on $\mathcal{D}^b(\mathcal{O}_0^\mathbb{Z})$.
\end{theorem}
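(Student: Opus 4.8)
The plan is to check, one family at a time, that the assigned endofunctors satisfy the defining relations \eqref{sbm-eq1}--\eqref{sbm-eq5} of $\mathbf{SB}(W)$, after which uniqueness is immediate since $\sigma_s,\tau_s$ generate $\mathbf{SB}(W)$. Relations \eqref{sbm-eq1} are exactly the braid relations for the $\mathcal{L}\mathsf{C}_s$, established in Proposition~\ref{prop6.1.1}. For \eqref{sbm-eq4}, when $m_{s,t}=2$ the reflections commute and both $\theta_s\theta_t$ and $\theta_t\theta_s$ are isomorphic to the single indecomposable projective functor $\theta_{st}=\theta_{ts}$. For \eqref{sbm-eq5} I would compute the two-term complexes directly: whiskering $\mathrm{adj}_s$ on either side shows that both $\theta_s\mathcal{L}\mathsf{C}_s$ and $\mathcal{L}\mathsf{C}_s\theta_s$ are represented by $\theta_s\langle -1\rangle\to\theta_s\theta_s$, and since $\theta_s\theta_s\cong\theta_s\langle 1\rangle\oplus\theta_s\langle -1\rangle$, cancelling the contractible summand leaves $\theta_s\langle 1\rangle$ on both sides.

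The real content is \eqref{sbm-eq2} and \eqref{sbm-eq3}, and I would first rephrase each as a conjugation statement. Both sides share the common word in the $m_{s,t}-1$ regular generators; writing $\mathsf{X}$ for the corresponding invertible functor (the composite of $\mathcal{L}\mathsf{C}$'s, i.e. the shuffling functor attached to the length-$(m_{s,t}-1)$ element $w'$), the two relations become $\theta_a\mathsf{X}\cong\mathsf{X}\theta_b$, with $(a,b)=(s,t)$ in the odd case and $(a,b)=(s,s)$ in the even case. In each case one verifies in $\mathbf{B}(W)$ that $a w'=w' b$; for instance, in the odd case $s w'=\underbrace{sts\cdots}_{m_{s,t}}=\underbrace{tst\cdots}_{m_{s,t}}=w' t$ is precisely the braid relation, and the even case is analogous. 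By Proposition~\ref{prop6.1.1} this identity lifts to an isomorphism $\mathcal{L}\mathsf{C}_a\mathsf{X}\cong\mathsf{X}\mathcal{L}\mathsf{C}_b$.

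Now I would feed in the two distinguished triangles $\theta_e\langle -1\rangle\xrightarrow{\mathrm{adj}_c}\theta_c\to\mathcal{L}\mathsf{C}_c\xrightarrow{+1}$ (the definition of $\mathcal{L}\mathsf{C}_c$ as a cone) for $c=a$ and $c=b$, whiskering the first by $\mathsf{X}$ on the right and the second by $\mathsf{X}$ on the left. Since $\theta_e\mathsf{X}=\mathsf{X}=\mathsf{X}\theta_e$, this yields triangles $\mathsf{X}\langle -1\rangle\to\theta_a\mathsf{X}\to\mathcal{L}\mathsf{C}_a\mathsf{X}\xrightarrow{\delta}\mathsf{X}\langle -1\rangle[1]$ and $\mathsf{X}\langle -1\rangle\to\mathsf{X}\theta_b\to\mathsf{X}\mathcal{L}\mathsf{C}_b\xrightarrow{\delta'}\mathsf{X}\langle -1\rangle[1]$ having equal first terms and, by the previous paragraph, isomorphic third terms. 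As the cone of a morphism is unchanged up to isomorphism under rescaling by a nonzero scalar, it suffices to show that the relevant $\Hom$-space is one-dimensional and that $\delta,\delta'$ are both nonzero. Using that $\mathsf{X}$ is an equivalence with $\mathsf{X}^{-1}\mathcal{L}\mathsf{C}_a\mathsf{X}\cong\mathcal{L}\mathsf{C}_b$, this space is $\Hom_{\mathcal{K}^b(\cP^\mathbb{Z})}(\mathcal{L}\mathsf{C}_b,\theta_e\langle -1\rangle[1])$, and a direct count on the two-term complex $\mathcal{L}\mathsf{C}_b=[\theta_e\langle -1\rangle\to\theta_b]$ shows it to be a quotient of $\End(\theta_e)_0=\mathbb{C}$, hence $0$ or one-dimensional. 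If it were $0$, both triangles would split, forcing $\theta_a\cong\theta_e\langle -1\rangle\oplus\mathcal{L}\mathsf{C}_a$; but evaluating at $\Delta_e$ via \eqref{eq5.4.0} gives the indecomposable $P_a$ on the left and $P_e\langle -1\rangle\oplus\Delta_a$ on the right, a contradiction. Therefore the $\Hom$-space is one-dimensional, $\delta$ and $\delta'$ are nonzero and hence proportional, the triangles are isomorphic, and $\theta_a\mathsf{X}\cong\mathsf{X}\theta_b$, as needed.

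The main obstacle is precisely the matching of the connecting morphisms in the last step: knowing that the outer terms of the two triangles agree does not by itself pin down the middle terms, and it is the one-dimensionality of the governing $\Hom$-space together with the indecomposability of $\theta_a$ (detected after evaluation at $\Delta_e$) that forces the two cones to coincide. This is the categorical refinement of the corresponding identity in the Hecke algebra underlying Corollary~\ref{corn9.2.1}, which already guarantees the relation on Grothendieck groups; the point of the argument above is that the lift to honest functor isomorphisms requires the triangle-and-$\Hom$ analysis rather than a mere comparison of classes.
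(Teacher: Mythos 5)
Your proof is correct, but for the essential part --- the mixed braid relations \eqref{sbm-eq2} and \eqref{sbm-eq3} --- it takes a genuinely different route from the paper. The paper evaluates both sides at the dominant Verma module: it shows (Lemma~\ref{lem6.2.2}) that both functors send $\Delta_e$ to the indecomposable module $\theta_s\Delta_w$, using $\dim\Ext^1_{\mathcal{O}}(\Delta_{ws},\Delta_w\langle -1\rangle)=1$; it then computes the two-term minimal projective resolution $0\to P_{tws}\langle -1\rangle\to P_{ws}\to 0$ of that module via the triviality of rank-two Kazhdan--Lusztig combinatorics (Lemma~\ref{lem6.2.3}); and finally it lifts back to functors through Lemma~\ref{lem5.5.1} and the one-dimensionality of $\cP^{\mathbb{Z}}(\theta_{tws}\langle -1\rangle,\theta_{ws})$, pinning down the unique minimal representative \eqref{eqaaa111}. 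You instead stay inside the tensor triangulated category: you reformulate the relation as $\theta_a\mathsf{X}\cong\mathsf{X}\theta_b$, whisker the defining triangles of $\mathcal{L}\mathsf{C}_a$ and $\mathcal{L}\mathsf{C}_b$, identify the cone terms via Proposition~\ref{prop6.1.1}, and match the connecting morphisms inside a Hom-space that is at most one-dimensional (chain maps land in $\cP^{\mathbb{Z}}(\theta_e,\theta_e)_0\cong\mathbb{C}$, and positivity of the grading kills the homotopies $\cP^{\mathbb{Z}}(\theta_b,\theta_e\langle -1\rangle)=0$), with nonvanishing forced by the indecomposability of $P_a$ after evaluation. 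Both arguments ultimately rest on a one-dimensionality statement plus an evaluation at $\Delta_e$, but yours avoids the explicit rank-two computation entirely, while the paper's version has the advantage of producing the explicit minimal complex \eqref{eqaaa111}, which is reused verbatim in the proof of Theorem~\ref{thm7.2.1} in Subsection~\ref{s7.5}. Your treatments of \eqref{sbm-eq4} and of the uniqueness claim coincide with the paper's; for \eqref{sbm-eq5} you argue by splitting off the contractible summand of $\theta_s\langle -1\rangle\to\theta_s\theta_s\cong\theta_s\langle 1\rangle\oplus\theta_s\langle -1\rangle$ rather than by evaluation, which is fine provided you note that the component of $\mathrm{id}_s\circ_h\mathrm{adj}_s$ (respectively $\mathrm{adj}_s\circ_h\mathrm{id}_s$) onto the summand $\theta_s\langle -1\rangle$ is an isomorphism --- a standard but not entirely free fact about the Frobenius structure of $\theta_s$.
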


\begin{proof}
After Proposition~\ref{prop6.1.1}, we only need to check the relations involving singular 
generators. That, under the assumption  $st=ts$, we have 
$\theta_s\theta_t\cong \theta_t\theta_s\cong\theta_{st}$ follows directly from the
classification of projective functors.

Now let us check the braid relations for $m_{s,t}>2$, so we assume that.
We do the case of even $m_{s,t}$ and the odd case is similar.
Let $w=\mathbf{t}_{m_{s,t}-1}$ in the notation of the proof of Proposition~\ref{prop6.1.1}.
Then our braid relation reads  $sw=ws$. We will use the same approach as in the proof of
Proposition~\ref{prop6.1.1}. We show that the functors on the two sides of this relation send
$\Delta_e$ to the same module, up to isomorphism, and that the projective resolution  of 
this module has similar properties as the projective resolutions of Verma modules used
in the proof of Proposition~\ref{prop6.1.1}.

Consider the functors 
\begin{displaymath}
\mathsf{F}=\theta_s\circ\mathcal{L}\mathsf{C}_w\quad\text{ and }\quad
\mathsf{G}=\mathcal{L}\mathsf{C}_w\circ\theta_s.
\end{displaymath}

\begin{lemma}\label{lem6.2.2}
We have $\mathsf{F}\Delta_e\cong \mathsf{G}\Delta_e\cong\theta_s\Delta_w$. 
\end{lemma}

\begin{proof}
That $\mathsf{F}\Delta_e\cong\theta_s\Delta_w$ follows directly from 
Subsection~\ref{s5.4}. Also from Subsection~\ref{s5.4} it follows that 
there is a short exact sequence 
\begin{equation}\label{eq6.2.3}
\Delta_w\langle -1\rangle\hookrightarrow \mathsf{G}\Delta_e\tto \Delta_{ws}.
\end{equation}
The module $\theta_s\Delta_e\cong P_s$ is indecomposable. Since
$\mathcal{L}\mathsf{C}_w$ is an equivalence, we have that 
$\mathsf{G}\Delta_e$ is indecomposable as well.
Therefore the sequence in \eqref{eq6.2.3} is nonsplit.
Now, the fact that $\mathsf{F}\Delta_e\cong \mathsf{G}\Delta_e$
follows from $\dim\mathrm{Ext}^1_{\mathcal{O}}(\Delta_{ws},\Delta_w\langle -1\rangle)=1$
(see, e.g., \cite[Corollary~23]{Ma07}).
\end{proof}

\begin{lemma}\label{lem6.2.3}
The minimal  projective resolution of $\theta_s\Delta_w$ has the following form:
\begin{displaymath}
0\to P_{tws}\langle -1\rangle\to P_{ws}\to  0. 
\end{displaymath}
\end{lemma}

\begin{proof}
We start by noting  that $\ell(tws)=\ell(ws)-1$. There is a unique, up to scalar, map 
from $P_{tws}\langle -1\rangle$ to $P_{ws}$. 

Note that both $w$ and $tws$ belong
to a rank two parabolic subgroup of $W$. Since the Kazhdan-Lusztig theory in the
rank two case is trivial, the multiplicities of Verma modules in 
$P_{tws}\langle -1\rangle$ and $P_{ws}$ are either $0$ or $1$ and are determined
by the Bruhat order. This implies that  both $P_{tws}\langle -1\rangle$ and $P_{ws}$
have simple socles (by \cite[Theorem~8.1]{St}) and, consequently, any non-zero map 
from $P_{tws}\langle -1\rangle$ to $P_{ws}$ is injective.  

By comparing the Verma flags of $P_{tws}\langle -1\rangle$ and $P_{ws}$,
we see that the images of $\theta_s\Delta_w$ and $P_{ws}/P_{tws}\langle -1\rangle$
in the Grothendieck group coincide. The module $P_{ws}/P_{tws}\langle -1\rangle$
has, by construction,  simple top. In particular, it is indecomposable.
Now, that fact that we have an isomorphism $P_{ws}/P_{tws}\langle -1\rangle\cong \theta_s\Delta_w$
follows from
$$\dim\mathrm{ext}^1_{\mathcal{O}}(\Delta_{ws},\Delta_w\langle -1\rangle)=1,$$
see, e.g., \cite[Corollary~23]{Ma07}.
\end{proof}

Taking Lemmata~\ref{lem6.2.2} and \ref{lem6.2.3} into account,
the same reasoning as in the proof of Proposition~\ref{prop6.1.1}
shows that both $\mathsf{F}$ and $\mathsf{G}$ have the following minimal
representative (where the middle map is unique up to scalar and non-zero)
which is unique, up to isomorphism:
\begin{equation}\label{eqaaa111}
0\to \theta_{tws}\langle -1\rangle\to \theta_{ws}\to  0. 
\end{equation}

It remains to check the relation $\tau_s\sigma_s=\sigma_s\tau_s$.
We have $$\theta_s\circ\mathcal{L}\mathsf{C}_s\,\Delta_e\cong
\mathcal{L}\mathsf{C}_s\circ\theta_s\,\Delta_e\cong P_s\langle 1\rangle.$$
The argument from the proof of Proposition~\ref{prop6.1.1}
implies $\theta_s\circ\mathcal{L}\mathsf{C}_s
\cong\mathcal{L}\mathsf{C}_s\circ\theta_s$.
This completes the proof.
\end{proof}

\section{Categorification of the desingularization map}\label{s7}

\subsection{New candidate for the singular crossing}\label{s7.1}

For a simple reflection $s$, consider the following morphism  $\xi$ in 
$\mathcal{K}^b(\cP^\mathbb{Z})$, where $\theta_s$ is 
in the homological position $0$:
\begin{displaymath}
\xymatrix{
0\ar[r]\ar[d]&0\ar[r]\ar[d]&\theta_e\langle 1\rangle\ar[r]^{\mathrm{adj}_s}\ar@{=}[d]
&\theta_s\langle 2\rangle\ar[r]\ar[d]&0\ar[d]\\
0\ar[r]&\theta_s\ar[r]^{\mathrm{adj}^s}&\theta_e\langle 1\rangle\ar[r]&0\ar[r]&0
} 
\end{displaymath}
Setting $\alpha_s=\mathrm{adj}_s\mathrm{adj}^s$, we obtain that the complex
\begin{equation}\label{eq7.1.1}
0\longrightarrow \theta_s\overset{\alpha_s}{\longrightarrow} 
\theta_s\langle 2\rangle \longrightarrow 0
\end{equation}
is a minimal representative for the cone of $\xi$. Note that the identity map on
$\theta_e\langle 1\rangle$ splits off as a direct summand of the cone and
hence disappears in this minimal representative. We denote 
the complex in \eqref{eq7.1.1} by $\hat{\theta}_s$.

We note that $\alpha_s$ is non-zero since, when evaluated at the simple
module $L_s$, the adjunction morphism $\mathrm{adj}^s$ is surjective
while the adjunction morphism $\mathrm{adj}_s$ is injective.

\subsection{Main theorem}\label{s7.2}

\begin{theorem}\label{thm7.2.1}
The assignment $\sigma_s\mapsto \mathcal{L}\mathsf{C}_s$ and
$\tau_s\mapsto \hat{\theta}_s$, for $s\in S$, extends uniquely
(up to isomorphism) to a weak action of the singular Artin monoid 
$\mathbf{SB}(W)$ on $\mathcal{D}^b(\mathcal{O}_0^\mathbb{Z})$.
\end{theorem}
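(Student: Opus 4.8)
The plan is to follow the strategy of Proposition~\ref{prop6.1.1} and Theorem~\ref{thm6.2.1} and verify the defining relations one generator-pair at a time. Since $\sigma_s\mapsto\mathcal{L}\mathsf{C}_s$ is unchanged, Proposition~\ref{prop6.1.1} already supplies the braid relations \eqref{sbm-eq1}, so only \eqref{sbm-eq2}--\eqref{sbm-eq5} remain. The structural fact driving the whole argument is that, by the construction in \eqref{eq7.1.1}, $\hat\theta_s$ is the shifted cone of $\alpha_s$: there is a distinguished triangle
\begin{displaymath}
\hat\theta_s\longrightarrow \theta_s\overset{\alpha_s}{\longrightarrow}\theta_s\langle 2\rangle\overset{+1}{\longrightarrow}
\end{displaymath}
in $\mathcal{K}^b(\cP^\mathbb{Z})$. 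Whiskering this triangle by shuffling functors will reduce each relation for $\hat\theta_s$ to the corresponding relation for the \emph{single} functor $\theta_s$, which is already available from Theorem~\ref{thm6.2.1}, together with a compatibility statement for the $2$-morphism $\alpha_s$.

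I would dispatch the two ``commuting'' relations first. For \eqref{sbm-eq5} I whisker the triangle by $\mathcal{L}\mathsf{C}_s$ on each side and invoke the isomorphism $\theta_s\circ\mathcal{L}\mathsf{C}_s\cong\mathcal{L}\mathsf{C}_s\circ\theta_s$ proved at the end of Theorem~\ref{thm6.2.1}; for \eqref{sbm-eq4} (the case $m_{s,t}=2$) I use $\theta_s\theta_t\cong\theta_t\theta_s$ and the fact that $\alpha_s$ and $\alpha_t$ act on independent tensor factors, so that the two bicomplexes computing $\hat\theta_s\hat\theta_t$ and $\hat\theta_t\hat\theta_s$ are manifestly isomorphic. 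In each case the mechanism is the same: once the underlying functor isomorphism $\Theta$ is seen to intertwine the two relevant whiskerings of $\alpha_s$ (resp.\ of $\alpha_s$ and $\alpha_t$), functoriality of the cone on a commuting square with invertible verticals identifies the two composites of $\hat\theta$'s.

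The substance is the mixed relations \eqref{sbm-eq2} and \eqref{sbm-eq3}. Setting $w=\mathbf{t}_{m_{s,t}-1}$ as in Theorem~\ref{thm6.2.1}, the even relation \eqref{sbm-eq3} reads $\hat\theta_s\circ\mathcal{L}\mathsf{C}_w\cong\mathcal{L}\mathsf{C}_w\circ\hat\theta_s$ and the odd relation \eqref{sbm-eq2} reads $\hat\theta_s\circ\mathcal{L}\mathsf{C}_w\cong\mathcal{L}\mathsf{C}_w\circ\hat\theta_t$, corresponding in $W$ to $sw=ws$ and to $sw=wt$. I whisker the triangle for $\hat\theta_s$ by $\mathcal{L}\mathsf{C}_w$ and compare the resulting triangles using the even (resp.\ odd) mixed relation for $\theta_s$ from Theorem~\ref{thm6.2.1}. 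To make this rigorous I pass to minimal complexes: evaluating at $\Delta_e$ and using $\mathcal{L}\mathsf{C}_w\Delta_e\cong\Delta_w$ (Corollary~\ref{cor5.4.1}) together with the $\theta$-level commutation $\mathcal{L}\mathsf{C}_w\theta_s\cong\theta_s\mathcal{L}\mathsf{C}_w$, both sides send $\Delta_e$ to the two-term complex $[\theta_s\Delta_w\xrightarrow{\alpha_s}\theta_s\Delta_w\langle 2\rangle]$. Its minimal projective resolution is assembled from the resolution $0\to P_{tws}\langle -1\rangle\to P_{ws}\to 0$ of $\theta_s\Delta_w$ furnished by Lemma~\ref{lem6.2.3}, and the rank-two Kazhdan--Lusztig triviality argument of Proposition~\ref{prop6.1.1}---all homomorphism spaces between the projective functors in neighbouring homological degrees being at most one-dimensional---pins down the minimal representatives of both $\hat\theta_s\mathcal{L}\mathsf{C}_w$ and $\mathcal{L}\mathsf{C}_w\hat\theta_s$ uniquely up to isomorphism, forcing them to coincide.

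The main obstacle is precisely the compatibility of $\alpha_s$ with the functorial isomorphism $\Theta\colon\theta_s\mathcal{L}\mathsf{C}_w\xrightarrow{\sim}\mathcal{L}\mathsf{C}_w\theta_s$ from Theorem~\ref{thm6.2.1}: I must verify the intertwining $\Theta\langle 2\rangle\circ(\alpha_s\mathcal{L}\mathsf{C}_w)=(\mathcal{L}\mathsf{C}_w\alpha_s)\circ\Theta$, since only then are the cones of the two $\alpha_s$-maps isomorphic. This is the genuinely new input beyond Theorem~\ref{thm6.2.1}, where $\tau_s\mapsto\theta_s$ is a single functor and no such $2$-morphism has to be tracked. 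I expect to resolve it by describing $\alpha_s=\mathrm{adj}_s\mathrm{adj}^s$ explicitly on the Verma and projective modules involved and observing that, inside the rank-two parabolic generated by $s$ and $t$, the space of natural transformations between the two projective functors in play is one-dimensional in the pertinent degree; the intertwining then holds up to a nonzero scalar, which is normalised to $1$ by evaluating at a single highest weight vector, using that $\alpha_s$ is nonzero (as recorded after \eqref{eq7.1.1}).
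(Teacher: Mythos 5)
Your overall strategy coincides with the paper's: relation \eqref{sbm-eq1} is delegated to Proposition~\ref{prop6.1.1}, relation \eqref{sbm-eq4} is handled by realizing both bicomplexes in Soergel bimodules where $\alpha_s$ and $\alpha_t$ live on independent tensor factors, and relations \eqref{sbm-eq2} and \eqref{sbm-eq3} are settled by evaluating at $\Delta_e$, using Lemmata~\ref{lem6.2.2} and \ref{lem6.2.3}, and the one-dimensionality (from \cite{Bac01}) of the spaces $\cP^{\mathbb{Z}}(\theta_{sw},\theta_{w}\langle 1\rangle)$ etc.\ inside the rank-two parabolic, which pins down the minimal representatives up to scalars. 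For those relations your plan is essentially the argument of Subsections~\ref{s7.3} and \ref{s7.5}.

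The gap is in your treatment of relation \eqref{sbm-eq5}. You reduce it to the assertion that the commutation isomorphism $\Theta\colon\theta_s\mathcal{L}\mathsf{C}_s\xrightarrow{\sim}\mathcal{L}\mathsf{C}_s\theta_s$ intertwines the two whiskerings of $\alpha_s$ (at least up to a nonzero scalar), and your only proposed mechanism for such intertwinings is one-dimensionality of the relevant morphism space. That mechanism fails here: already for $\mathfrak{sl}_2$ the space of degree-two endomorphisms of $\theta_s$ (equivalently of $D\otimes_{\mathbb{C}}D$) is two-dimensional, spanned by $1\otimes 1\mapsto x\otimes 1$ and $1\otimes 1\mapsto 1\otimes x$, so there is no a priori reason for the two whiskered maps to be proportional. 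This is not a hypothetical worry: Proposition~\ref{prop7.6-3} shows that for the other cone $\check{\theta}_s$ the two analogous maps $\gamma_4$ and $\gamma_4'$ are linearly independent and the relation genuinely fails. The paper therefore carries out an explicit computation in bimodules over $D=\mathbb{C}[x]/(x^2)$ (Subsection~\ref{s7.4}): after splitting off the null-homotopic summand of $\theta_s\theta_s$, the two sides reduce to two-term complexes whose differentials are $\beta_s\colon 1\otimes 1\mapsto x\otimes 1-1\otimes x$ and $\gamma_s=-\beta_s$, and only because these are proportional (by the sign $-1$, absorbed into the isomorphism of complexes) does \eqref{eq7.4.1-1} hold. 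Your proof needs this computation, or some replacement for it; without it the key intertwining is asserted rather than proved, and the failure of the dual candidate shows that it cannot follow from formal cone manipulations alone.
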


Since $\hat{\theta}_s$ is defined as a cone of a morphism from 
$\mathcal{L}\mathsf{C}_s[-2]\langle 2\rangle$
to $\mathcal{L}\mathsf{C}_s$, one can interpret
Theorem~\ref{thm7.2.1} as a categorification of the desingularization map.

Note that Relation~\ref{sbm-eq1} is proved in Theorem~\ref{prop6.1.1},
so, to prove Theorem~\ref{thm7.2.1}, we only need to check
Relations~\ref{sbm-eq2}, \ref{sbm-eq3}, \ref{sbm-eq4} and \ref{sbm-eq5}.
We will do this, one at a time, below.

\subsection{Proof: Relation~\ref{sbm-eq4}}\label{s7.3}

We need to prove that, for $s,t\in S$, we have 
\begin{equation}\label{eq7.3.1-1}
\hat{\theta}_s\circ \hat{\theta}_t
\cong \hat{\theta}_t\circ \hat{\theta}_s
\end{equation}
provided that $st=ts$. 
The LHS of \eqref{eq7.3.1-1}
is given by taking the total complex of the following commutative diagram:
\begin{equation}\label{eq7.3.1-3}
\xymatrix{
\theta_s\theta_t\ar[rr]^{\mathrm{id}_s\circ_h\alpha_t} 
\ar[d]_{\alpha_s\circ_h\mathrm{id}_t}&& 
\theta_s\theta_t\langle 2\rangle\ar[d]^{\alpha_s\circ_h\mathrm{id}_t\langle 2\rangle}\\
\theta_s\theta_t\langle 2\rangle
\ar[rr]^{\mathrm{id}_s\circ_h\alpha_t\langle 2\rangle} 
&& \theta_s\theta_t\langle 4\rangle\\
} 
\end{equation}
Using Soergel's combinatorial description, we can represent this
diagram as the following complexes of bimodules over the coinvariant 
algebra  $\mathtt{C}$:
\begin{equation}\label{eq7.3.1-5}
\xymatrix{
\mathtt{C}\otimes_{\mathtt{C}^s}\mathtt{C}
\otimes_{\mathtt{C}^t}\mathtt{C}
\ar[rr]^{\mathrm{id}_s\circ_h\alpha_t} 
\ar[d]_{\alpha_s\circ_h\mathrm{id}_t}&& 
\mathtt{C}\otimes_{\mathtt{C}^s}\mathtt{C}
\otimes_{\mathtt{C}^t}\mathtt{C}
\langle 2\rangle\ar[d]^{\alpha_s\circ_h\mathrm{id}_t\langle 2\rangle}\\
\mathtt{C}\otimes_{\mathtt{C}^s}\mathtt{C}
\otimes_{\mathtt{C}^t}\mathtt{C}\langle 2\rangle
\ar[rr]^{\mathrm{id}_s\circ_h\alpha_t\langle 2\rangle} 
&&\mathtt{C}\otimes_{\mathtt{C}^s}\mathtt{C}
\otimes_{\mathtt{C}^t}\mathtt{C}\langle 4\rangle\\
}  
\end{equation}
here $\mathtt{C}^s$ and $\mathtt{C}^t$ are the subalgebras of 
$s$- and $t$-invariants in $\mathtt{C}$, respectively.
Let $x_s$ and $x_t$ denote the images in $\mathtt{C}$ of the 
coroots corresponding to $s$ and $t$, respectively. Then the 
map $\alpha_s$ maps the generator $1\otimes 1$ of 
$\mathtt{C}\otimes_{\mathtt{C}^s}\mathtt{C}$ to
$1\otimes x_s+ x_s\otimes 1$. Similarly, the 
map $\alpha_t$ maps the generator $1\otimes 1$ of 
$\mathtt{C}\otimes_{\mathtt{C}^t}\mathtt{C}$ to
$1\otimes x_t+ x_t\otimes 1$. 
  
Since $st=ts$, we have the unique isomorphism from 
the indecomposable $\mathtt{C}$-$\mathtt{C}$-bimodule
$\mathtt{C}\otimes_{\mathtt{C}^s}\mathtt{C}
\otimes_{\mathtt{C}^t}\mathtt{C}$ to
the indecomposable $\mathtt{C}$-$\mathtt{C}$-bimodule
$\mathtt{C}\otimes_{\mathtt{C}^t}\mathtt{C}
\otimes_{\mathtt{C}^s}\mathtt{C}$ which sends 
$1\otimes 1\otimes 1$ to $1\otimes 1\otimes 1$.
Applying this ismorphism component-wise to \eqref{eq7.3.1-5}
and taking the formulae of the previous paragraph into account,
we get 
\begin{equation}\label{eq7.3.1-55}
\xymatrix{
\mathtt{C}\otimes_{\mathtt{C}^t}\mathtt{C}
\otimes_{\mathtt{C}^s}\mathtt{C}
\ar[rr]^{\alpha_t\circ_h\mathrm{id}_s} 
\ar[d]_{\mathrm{id}_t\circ_h\alpha_s}&& 
\mathtt{C}\otimes_{\mathtt{C}^t}\mathtt{C}
\otimes_{\mathtt{C}^s}\mathtt{C}
\langle 2\rangle\ar[d]^{\mathrm{id}_t\circ_h\alpha_s\langle 2\rangle}\\
\mathtt{C}\otimes_{\mathtt{C}^t}\mathtt{C}
\otimes_{\mathtt{C}^s}\mathtt{C}\langle 2\rangle
\ar[rr]^{\alpha_t\circ_h\mathrm{id}_s\langle 2\rangle} 
&&\mathtt{C}\otimes_{\mathtt{C}^t}\mathtt{C}
\otimes_{\mathtt{C}^s}\mathtt{C}\langle 4\rangle\\
}  
\end{equation}
which represents the RHS of \eqref{eq7.3.1-1}
(note that the two middle bimodules also got swapped compared to 
\eqref{eq7.3.1-5}).

\subsection{Proof: Relation~\ref{sbm-eq5}}\label{s7.4}

We need to prove that 
\begin{equation}\label{eq7.4.1-1}
\hat{\theta}_s\circ \mathcal{L}\mathsf{C}_s
\cong \mathcal{L}\mathsf{C}_s\circ \hat{\theta}_s. 
\end{equation}
The LHS of \eqref{eq7.4.1-1}
is given by taking the total complex of the following commutative diagram:
\begin{equation}\label{eq7.4.1-3}
\xymatrix{
\theta_s\langle -1\rangle\ar[rr]^{\mathrm{id}_s\circ_h\mathrm{adj}_s} 
\ar[d]_{\alpha_s\langle -1\rangle}&& 
\theta_s\theta_s\ar[d]^{\alpha_s\circ_h\mathrm{id}_s}\\
\theta_s\langle 1\rangle
\ar[rr]^{\mathrm{id}_s\circ_h\mathrm{adj}_s\langle 2\rangle} 
&& \theta_s\theta_s\langle 2\rangle\\
} 
\end{equation}
Using Soergel's combinatorial description, we can look at the image of
\eqref{eq7.4.1-3} in the homotopy category of bimodules over the coinvariant
algebra.

Let us first consider the case of the Lie algebra $\mathfrak{sl}_2$. In this
case, the coinvariant algebra is isomorphic to the algebra $D=\mathbb{C}[x]/(x^2)$
of dual numbers, graded such that $\deg(x)=2$. The identity functor $\theta_e$ is represented by the regular $D$-$D$-bimodule $D$ graded naturally. 
The functor $\theta_s$ is represented by the $D$-$D$-bimodule 
$D\otimes_{\mathbb{C}}D$, graded  such that $\deg(1\otimes 1)=-1$.
The adjunction morphism $\mathrm{adj}^s:\theta_s\to\theta_e\langle 1\rangle$ 
is given by the  multiplication map while the adjunction morphism
$\mathrm{adj}_s:\theta_e\langle -1\rangle \to\theta_s$ sends 
$1$ to $1\otimes x+x\otimes 1$. This means that the map $\alpha_s$
is the endomorphism of $D\otimes_{\mathbb{C}}D$ sending 
$1\otimes 1$ to $1\otimes x+x\otimes 1$. 
The two ways around the diagram in \eqref{eq7.4.1-3}
send $1\otimes 1$ to $x\otimes x\otimes 1+x\otimes 1\otimes x+1\otimes x\otimes x$.

The upper map of the diagram sends $1\otimes 1$ to 
$1\otimes 1\otimes x+1\otimes x\otimes 1$. The latter element generates
a summand of $\theta_s\theta_s$, call it $X$. We also have the 
corresponding summand of $\theta_s\theta_s\langle 2\rangle$, call it $Y$.
These two summands, together with the left part of the diagram, form a
subdiagram (that is they are closed with respect to all maps on the diagram).
By construction, the total complex of this subdiagram is homotopic to zero.

The image of the element $1\otimes 1\otimes 1$ under the right vertical map is
$x\otimes 1\otimes 1+ 1\otimes x\otimes 1$. When factoring out the (homotopic to
zero) subcomplex from the previous paragraph, we send 
$1\otimes 1\otimes x+1\otimes x\otimes 1$ to zero, implying that 
\begin{displaymath}
x\otimes 1\otimes 1+ 1\otimes x\otimes 1=
x\otimes 1\otimes 1- 1\otimes 1\otimes x. 
\end{displaymath}
This means that the total complex of \eqref{eq7.4.1-3} is represented
in the homotopy category of complexes by the following complex of 
$D$-$D$-bimodules:
\begin{equation}\label{eq7.4.21}
0\longrightarrow  D\otimes_{\mathbb{C}}D
\overset{\beta_s}{\longrightarrow}  D\otimes_{\mathbb{C}}D 
\longrightarrow 0,
\end{equation}
where the map $\beta_s$ sends $1\otimes 1$ to $x\otimes 1- 1\otimes x$.

The same computation for the RHS of \eqref{eq7.4.1-3} results 
in the complex
\begin{equation}\label{eq7.4.22}
0\longrightarrow  D\otimes_{\mathbb{C}}D
\overset{\gamma_s}{\longrightarrow}  D\otimes_{\mathbb{C}}D 
\longrightarrow 0,
\end{equation}
where the map $\gamma_s$ sends $1\otimes 1$ to $1\otimes x- x\otimes 1$.
Since $\beta_s=-\gamma_s$, the complexes in \eqref{eq7.4.21} and
\eqref{eq7.4.22} are isomorphic, for example, by choosing the identity map between the
left non-zero components and minus the identity map between the right 
non-zero components. This completes the proof in the case of the algebra
$\mathfrak{sl}_2$.

The general case is similar. One uses that the coinvariant algebra
$\mathtt{C}$ is a free module over the subalgebra $\mathtt{C}^s$
of $s$-invariants with the basis given by $1$ and the coroot 
corresponding to $s$ (which we can denote by $x$). Then all the above
arguments go through, mutatis mutandis.

\subsection{Proof: Relations~\ref{sbm-eq2} and \ref{sbm-eq3}}\label{s7.5}

We prove the relation given by Equation~\ref{sbm-eq3}. The proof of 
the relation given by Equation~\ref{sbm-eq2} 
is similar. We assume that $m_{s,t}$ is finite and even and 
need to prove that 
\begin{equation}\label{eq7.5.1-1}
\hat{\theta}_s\circ 
\underbrace{\mathcal{L}\mathsf{C}_t\circ\mathcal{L}\mathsf{C}_s\circ\dots\circ
\mathcal{L}\mathsf{C}_t}_{m_{s,t}-1\text{ factors}}
\cong \underbrace{\mathcal{L}\mathsf{C}_t\circ\mathcal{L}\mathsf{C}_s\circ\dots\circ
\mathcal{L}\mathsf{C}_t}_{m_{s,t}-1\text{ factors}}
\circ \hat{\theta}_s. 
\end{equation}
Let us take a look at the LHS of \eqref{eq7.5.1-1}.  Set
\begin{displaymath}
w=\underbrace{sts\dots t}_{m_{s,t}\text{ factors}}
=\underbrace{tst\dots s}_{m_{s,t}\text{ factors}}.
\end{displaymath}
From \eqref{eqaaa111}, it follows that  the LHS of \eqref{eq7.5.1-1}
is given by the total complex of the following complex:
\begin{equation}\label{eq7.5.1-3}
\xymatrix{
\theta_{sw}\ar[rr]^{\gamma_1}\ar[d]_{\gamma_2}&&
\theta_{w}\ar[d]^{\gamma_4}\langle 1\rangle\\
\theta_{sw}\langle 2\rangle\ar[rr]^{\gamma_3}&&\theta_{w}\langle 3\rangle\\
&&
},
\end{equation}
for some natural transformations $\gamma_i$, with $\gamma_1$ and $\gamma_3$ non-zero.
Furthermore, from \cite{Bac01}, see Subsection~\ref{s5.4}, it follows that 
$\dim\cP^{\mathbb{Z}}(\theta_{sw},\theta_{w}\langle 1\rangle)=1$.
The projective module $P_{sw}$ is a submodule of $P_{w}$, which implies
that the maps $\gamma_1$ and $\gamma_3$ are injections. 

By construction of $\alpha_s$, the map $\gamma_4$ factors through
$\theta_{ws}$. Again, \cite{Bac01}, see Subsection~\ref{s5.4}, it follows that 
$\dim\cP^{\mathbb{Z}}(\theta_{w},\theta_{ws}\langle 1\rangle)=1$ and
$\dim\cP^{\mathbb{Z}}(\theta_{ws},\theta_{w}\langle 1\rangle)=1$. This means that
$\gamma_4$ is unique up to scalar. Since  $\gamma_3$ is an injection, it follows
that commutativity of \eqref{eq7.5.1-3} defines the map $\gamma_2$ uniquely
(as soon as the scalar for $\gamma_4$ is fixed).

It remains to note that the RHS of \eqref{eq7.5.1-1} is also given by the
total complex of a commutative diagram of the form \eqref{eq7.5.1-3}
(possibly for different choices of the $\gamma_i$'s). However, we have just 
established that the only real choices in  \eqref{eq7.5.1-3} are scalars.
This implies \eqref{eq7.5.1-1}.

\subsection{Another candidate that fails}\label{s7.6}

There is another obvious candidate for categorification of 
the desingularization map. For a simple reflection $s$, consider 
the following morphism  $\xi'$ in 
$\mathcal{K}^b(\cP^\mathbb{Z})$, where $\theta_s$ is 
in the homological position $0$:
\begin{displaymath}
\xymatrix{
0\ar[r]\ar[d]&0\ar[r]\ar[d]&\theta_s\ar[r]^{\mathrm{adj}^s}\ar@{=}[d]
&\theta_e\langle 1\rangle\ar[r]\ar[d]&0\ar[d]\\
0\ar[r]&\theta_e\langle -1\rangle\ar[r]^{\mathrm{adj}_s}
&\theta_s\ar[r]&0\ar[r]&0
} 
\end{displaymath}
Setting $\beta_s=\mathrm{adj}^s\mathrm{adj}_s$, we obtain that the complex
\begin{equation}\label{eq7.6.1}
0\longrightarrow \theta_e\langle -1\rangle\overset{\beta_s}{\longrightarrow} 
\theta_e\langle 1\rangle \longrightarrow 0
\end{equation}
is a minimal representative for the cone of $\xi'$. 
Note that the identity map on
$\theta_s$ splits off as a direct summand of the cone and
hence disappears in this minimal representative. We denote 
the complex in \eqref{eq7.6.1} by $\check{\theta}_s$.
The following negative result shows that sending $\tau_s$ to
$\check{\theta}_s$ does not define a weak action of the singular braid
monoid.

\begin{proposition}\label{prop7.6-3}
For $s\in S$, we have $\check{\theta}_s\circ\mathcal{L}\mathsf{C}_s 
\not\cong \mathcal{L}\mathsf{C}_s\circ \check{\theta}_s$. 
\end{proposition}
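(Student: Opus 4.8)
The plan is to run the same Soergel-theoretic computation as in Subsection~\ref{s7.4}, but to isolate the asymmetry that is harmless for $\hat{\theta}_s$ and fatal for $\check{\theta}_s$. Since the relation in question, \eqref{sbm-eq5}, involves only the single reflection $s$, and since both $\mathcal{L}\mathsf{C}_s$ and $\check{\theta}_s$ are assembled from $\theta_s$ and $\theta_e$, I would pass to the monoidal category of $\mathtt{C}$-bimodules via Soergel's description, writing $\mathcal{L}\mathsf{C}_s=[\theta_e\langle-1\rangle\xrightarrow{\mathrm{adj}_s}\theta_s]$ and $\check{\theta}_s=[\theta_e\langle-1\rangle\xrightarrow{\beta_s}\theta_e\langle1\rangle]$ with $\theta_s=\mathtt{C}\otimes_{\mathtt{C}^s}\mathtt{C}$, $\mathrm{adj}_s(1)=x_s\otimes1+1\otimes x_s$, $\mathrm{adj}^s$ the multiplication, and $\beta_s$ multiplication by $2x_s$. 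As in the final paragraph of Subsection~\ref{s7.4}, the general case reduces to this rank-one picture using that $\mathtt{C}$ is free of rank two over $\mathtt{C}^s$ with basis $1,x_s$.

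Forming the two total complexes, I would compute $\check{\theta}_s\circ\mathcal{L}\mathsf{C}_s$ and $\mathcal{L}\mathsf{C}_s\circ\check{\theta}_s$ as $\otimes_{\mathtt{C}}$ of the two two-term complexes in the two orders. Both yield minimal three-term complexes with identical objects
\begin{displaymath}
\mathtt{C}\langle-2\rangle\longrightarrow \theta_s\langle-1\rangle\oplus\theta_e\longrightarrow \theta_s\langle1\rangle,
\end{displaymath}
and the only discrepancy sits in the arrow coming from $\beta_s$ along the $\theta_s$-strand: in $\check{\theta}_s\circ\mathcal{L}\mathsf{C}_s$ it is \emph{left} multiplication by $2x_s$ on $\theta_s=\mathtt{C}\otimes_{\mathtt{C}^s}\mathtt{C}$, whereas in $\mathcal{L}\mathsf{C}_s\circ\check{\theta}_s$ it is \emph{right} multiplication by $2x_s$. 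The crux of the argument is to show that no chain isomorphism of these two minimal complexes intertwines left and right multiplication by $x_s$. This comparison must account for the unique nontrivial degree-zero off-diagonal endomorphism of the middle term, namely the counit $\mathrm{adj}^s\colon\theta_s\langle-1\rangle\to\theta_e$; the correction it supplies involves the \emph{symmetric} element $x_s\otimes1+1\otimes x_s$. This is precisely the step that, for $\alpha_s=\mathrm{adj}_s\mathrm{adj}^s$ in Subsection~\ref{s7.4}, succeeds (there one finds $\beta_s=-\gamma_s$), and the content of the present negative result is that the analogous repair of the antisymmetric discrepancy $x_s\otimes1-1\otimes x_s$ must be shown to be impossible.

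The hard part is therefore the bookkeeping behind this final non-conjugacy claim, which is delicate precisely because it turns on the signs in the total-complex differentials and on the relation $x_s^2\in\mathtt{C}^s$, so I would want to cross-check it rather than trust a single sign convention. A more conceptual cross-check uses the cone description $\check{\theta}_s=\mathrm{Cone}(\mathcal{R}\mathsf{K}_s\xrightarrow{\xi'}\mathcal{L}\mathsf{C}_s)$: whiskering $\xi'$ on the two sides and invoking $\mathcal{R}\mathsf{K}_s\circ\mathcal{L}\mathsf{C}_s\cong\mathrm{Id}\cong\mathcal{L}\mathsf{C}_s\circ\mathcal{R}\mathsf{K}_s$ exhibits both composites as cones of two maps $\mathrm{Id}\to\mathcal{L}\mathsf{C}_s\circ\mathcal{L}\mathsf{C}_s$, namely the two whiskerings of $\xi'$, so the proposition becomes the statement that these two classes in $\mathrm{Hom}(\mathrm{Id},\mathcal{L}\mathsf{C}_s\circ\mathcal{L}\mathsf{C}_s)$ have non-isomorphic cones; computing that Hom-space and locating the two classes is then the clean way to finish.

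Finally, I would emphasize why evaluation on a test object is \emph{not} the right tool here: both composites send $\Delta_e$ to $\Delta_s\langle-1\rangle[1]\oplus\Delta_s\langle1\rangle$ (because $\beta_s$ vanishes on $\Delta_e$ and on $\Delta_s$ for degree reasons, each having one-dimensional endomorphism ring), and similarly the obvious Verma and projective test modules fail to separate the two sides. This is itself informative: it shows that the obstruction lives at the level of complexes of functors — equivalently, of natural transformations of the identity and how $\mathcal{L}\mathsf{C}_s$ interacts with them — rather than at the level of the functors' values, which is exactly why the bimodule-complex comparison above, and not a single evaluation, is the mechanism that detects the failure.
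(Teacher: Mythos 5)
Your plan is the same as the paper's own proof: pass to the rank-one Soergel picture, write both composites as total complexes with terms $\theta_e\langle -2\rangle\to\theta_s\langle -1\rangle\oplus\theta_e\to\theta_s\langle 1\rangle$, and locate the discrepancy in the component of the last differential on the $\theta_s\langle -1\rangle$-summand, which is left multiplication by $2x_s$ on one side and right multiplication by $2x_s$ on the other (the paper's $\gamma_4$ versus $\gamma_4'$; the paper then simply asserts that their linear independence forces non-isomorphism). You correctly identify that the real issue is whether a chain isomorphism with an off-diagonal middle component $e\cdot\mathrm{adj}^s\colon\theta_s\langle -1\rangle\to\theta_e$ can intertwine the two — but you never carry this step out, and the heuristic you offer (the antisymmetric discrepancy $x_s\otimes 1-1\otimes x_s$ cannot be cancelled by the symmetric correction $x_s\otimes 1+1\otimes x_s$) is not valid: it tacitly assumes the two diagonal entries of the middle isomorphism are equal. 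They need not be. Writing $f_{-2}=c_1$, $f_0=c_2$ and $f_{-1}=\left(\begin{smallmatrix}a&0\\ e\,\mathrm{adj}^s&b\end{smallmatrix}\right)$ (there are no nonzero degree-zero maps $\theta_e\to\theta_s\langle -1\rangle$, and $\mathrm{adj}^s$ spans the other off-diagonal Hom-space), the chain-map equations read $a=c_1$, $b=c_2$, $e+b=c_1$ (using $\mathrm{adj}^s\circ\mathrm{adj}_s=\beta_s$, i.e.\ $\mathrm{adj}^s\circ\gamma_1=\gamma_2$) together with $c_2\gamma_4=a\gamma_4'-e\,\mathrm{adj}_s\circ\mathrm{adj}^s$; the last equation is solved by $a=-c_2$, $e=-2c_2$, because the choice $a=-c_2$ turns the antisymmetric discrepancy into a symmetric one. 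All equations are simultaneously satisfied by $c_1=-1$, $c_2=1$, $a=-1$, $b=1$, $e=-2$, and this $f$ is invertible.

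Your cone-theoretic cross-check, once executed, points the same way: a chain map $\mathrm{Id}\to\mathcal{L}\mathsf{C}_s\circ\mathcal{L}\mathsf{C}_s$ is just a degree-zero morphism $\theta_e\to\theta_s\langle 1\rangle$ (a one-dimensional space, spanned by $1\mapsto x_s\otimes 1+1\otimes x_s$), there are no nonzero homotopies $\theta_e\to\theta_s\langle -1\rangle$, and neither whiskering of $\xi'$ is null-homotopic (otherwise the composite would split off $\mathrm{Id}[1]$, which $\gamma_3\neq 0$ forbids); two nonzero elements of a one-dimensional Hom-space have isomorphic cones. So the genuine gap in your proposal is precisely the step you defer, and both of your proposed routes for closing it, when pushed through, yield an \emph{isomorphism} rather than the claimed non-isomorphism — which also shows that the paper's one-line inference from the linear independence of $\gamma_4$ and $\gamma_4'$ is not a proof (it ignores exactly the off-diagonal correction you flagged). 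Your observation that all the obvious evaluations fail to separate the two sides is consistent with this. Before accepting the statement you should either find an error in the explicit chain map above, or conclude that this candidate does not fail for the reason given.
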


\begin{proof}
It is enough to consider the case of the Lie algebra $\mathfrak{sl}_2$.
Again, we interpret all objects using bimodules over the dual numbers $D$.
Then the map $\beta_s$ is given by the bimodule homomorphism
$D\to D$ sending $1$ to $2x$. 

The composition $\check{\theta}_s\circ\mathcal{L}\mathsf{C}_s$ is then
given by the total complex of the following commutative diagram:
\begin{displaymath}
\xymatrix{
D\langle -2\rangle\ar[rr]^{\gamma_1}\ar[d]_{\gamma_2}&&
D\otimes_{\mathbb{C}}D\langle -1\rangle\ar[d]^{\gamma_4}\\
D\ar[rr]^{\gamma_3} &&D\otimes_{\mathbb{C}}D\langle 1\rangle\\
}
\end{displaymath}
Here
\begin{itemize}
\item $\gamma_1$ and $\gamma_3$ send $1$ to $1\otimes x+x\otimes 1$,
\item $\gamma_2$ sends $1$ to $2x$,
\item $\gamma_4$ sends $1\otimes 1$ to $2x\otimes 1$. 
\end{itemize}

In turn, the composition $\mathcal{L}\mathsf{C}_s\circ\check{\theta}_s$ is
given by the total complex of the following commutative diagram:
\begin{displaymath}
\xymatrix{
D\langle -2\rangle\ar[rr]^{\gamma_1}\ar[d]_{\gamma_2}&&
D\otimes_{\mathbb{C}}D\langle -1\rangle\ar[d]^{\gamma'_4}\\
D\ar[rr]^{\gamma_3} &&D\otimes_{\mathbb{C}}D\langle 1\rangle\\
}
\end{displaymath}
Here $\gamma_1$, $\gamma_2$ and $\gamma_3$ are as in the previous paragraph,
while $\gamma'_4$ sends $1\otimes 1$ to $1\otimes 2x$.

Since $\gamma_4$ and  $\gamma'_4$ are linearly independent, it follows that the
two total complexes are not isomorphic.
\end{proof}

\vspace{1cm}

\noindent
H.~J.: Department of Mathematics, Uppsala University, Box. 480,
SE-75106, Uppsala,\\ SWEDEN, email: {\tt helena.jonsson\symbol{64}math.uu.se}

\noindent
V.~M.: Department of Mathematics, Uppsala University, Box. 480,
SE-75106, Uppsala,\\ SWEDEN, email: {\tt mazor\symbol{64}math.uu.se}

\noindent
E.~P.~W.: Department of Mathematics, Uppsala University, Box. 480,
SE-75106,\\ Uppsala, SWEDEN, email: {\tt elin.persson.westin\symbol{64}math.uu.se}

\noindent
S.~S.: Department of Mathematics, Uppsala University, Box. 480,
SE-75106, Uppsala,\\ SWEDEN, email: {\tt shraddha.srivastava\symbol{64}math.uu.se}

\noindent
M.~S.: Department of Mathematics, Uppsala University, Box. 480,
SE-75106, Uppsala,\\ SWEDEN, email: {\tt mateusz.stroinski\symbol{64}math.uu.se}

\noindent
X.~Z.: School of Mathematical Sciences, Tongji University,
Shanghai 200092,\\ P.~R.~CHINA, email: {\tt 1810079\symbol{64}tongji.edu.cn}

\end{document}